\newtheorem*{theorem*}{Theorem}
\newtheorem*{definition*}{Definition}
\newtheorem{theorem} {Theorem}[section]
\newtheorem{corollary}[theorem]{Corollary}
\newtheorem{lemma}[theorem]{Lemma}
\newtheorem{proposition}[theorem]{Proposition}
\newtheorem{example}[theorem]{Example}
\newtheorem{remark}[theorem]{Remark}
\newtheorem{definition}[theorem]{Definition}
\newcommand{\aff} {\operatorname{aff} }
\newcommand{\conv} {\operatorname{conv} }
\newcommand{\vol} {\operatorname{vol} }
\newcommand{\wert} {\operatorname{vert}}
\newcommand{\Z}{\mathbb{Z}}
\newcommand{\R}{\mathbb{R}}
\newcommand{\N}{\mathbb{N}}
\newcommand{\gen}[1]{\langle #1\rangle_{\Z}}
\begin{document}

\title{Non-spanning lattice $3$-polytopes}
\author{M\'onica Blanco}
\author{Francisco Santos}

\address
[M.~Blanco]
{
Mathematical Sciences Research Institute,
17 Gauss Way,
CA 94720 Berkeley, U. S. A.}
\email{m.blanco.math@gmail.com}

\address
[F.~Santos ]
{
Departamento de Matem\'aticas, Estad\'istica y Computaci\'on,
Universidad de Cantabria,
Av. de Los Castros 48,
39005 Santander, Spain
}
\email{francisco.santos@unican.es}

\thanks{Partially supported by grants MTM2014-54207-P and MTM2017-83750-P (both authors) and BES-2012-058920 (M.~Blanco) of the Spanish Ministry of Economy and Competitiveness, and by the Einstein Foundation Berlin (F.~Santos).
Both authors were also supported by the National Science Foundation under Grant No. DMS-1440140 while they were in residence at the Mathematical Sciences Research Institute in Berkeley, California, during the Fall 2017 semester.}

\begin{abstract}
We completely classify non-spanning $3$-polytopes, by which we mean lattice $3$-polytopes whose lattice points do not affinely span the lattice.
We show that, except for six small polytopes (all having between five and eight lattice points), every non-spanning $3$-polytope $P$ has the following simple description: $P\cap \Z^3$ consists of either (1) two lattice segments lying in parallel and consecutive lattice planes or (2) a lattice segment together with three or four extra lattice points placed in a very specific manner.

From this description we conclude that all the empty tetrahedra in a non-spanning $3$-polytope $P$ have the same volume and they form a triangulation of $P$, and we compute the $h^*$-vectors of all non-spanning $3$-polytopes.

We also show that all spanning $3$-polytopes contain a unimodular tetrahedron, except for two particular $3$-polytopes with five lattice points.
\end{abstract}

\keywords{Lattice polytope, spanning, classification, $3$-dimensional, lattice width}
\subjclass[2000]{52B10, 52B20}
\maketitle

\setcounter{tocdepth}{1}
\tableofcontents

%%%%%%%%%%%%%%%%%%%%%%%%%%%%%%%%%%%%%%%%%%%%%%%%%%%%%%%%
%%%%%%%%%%%%%%%%%%%%%%%%%%%%%%%%%%%%%%%%%%%%%%%%%%%%%%%%
%%%%%%%%%%%%%%%%%%%%%%%%%%%%%%%%%%%%%%%%%%%%%%%%%%%%%%%%

\section{Introduction and statement of results}

A lattice $d$-polytope is a polytope $P\subset\R^d$ with vertices in $\Z^d$ and with $\aff(P)=\R^d$. We call \emph{size} of $P$ its number of lattice points and \emph{width} the minimum length of the image $f(P)$ when $f$ ranges over all affine non-constant functionals $f:\R^d\to \R$ with $f(\Z^d)\subseteq\Z$.
That is, the minimum lattice distance between parallel lattice hyperplanes that enclose $P$.
\smallskip

In our papers~\cite{5points,6points,quasiminimals} we have enumerated all lattice $3$-polytopes of size 11 or less and of width greater than one. This classification makes sense thanks to the following result~\cite[Theorem 3]{5points}: \emph{for each $n\in \N$ there are only finitely many lattice $3$-polytopes of width greater than one and with exactly $n$ lattice points}. 
Here and in the rest of the paper we consider lattice polytopes modulo \emph{unimodular equivalence} or \emph{lattice isomorphism}. That is, we consider $P$ and $Q$ \emph{isomorphic} (and write $P\cong Q$) if there is an affine map $f:\R^d\to \R^d$ with $f(\Z^d)=\Z^d$ and $f(P)=Q$.\medskip

As a by-product of the classification we noticed that most lattice $3$-polytopes are ``lattice-spanning'', according to the following definition:

\begin{definition*}
Let $P\subset \R^d$ be a lattice $d$-polytope. We call \emph{sublattice index} of $P$ the index, as a sublattice of $\Z^d$, of the affine lattice generated by $P\cap \Z^d$.
$P$ is called \emph{lattice-spanning} if it has index $1$. 
We abbreviate sublattice index and lattice-spanning as \emph{index} and \emph{spanning}. 
\end{definition*}

In this paper we completely classify non-spanning lattice $3$-polytopes. 
Part of our motivation comes from the recent results of Hofscheier et al.~\cite{HKN-a,HKN-b} on $h^*$-vectors of spanning polytopes (see Theorem~\ref{thm:HKN}). In particular, in Section~\ref{sec:hstar} we compute the $h^*$-vectors of all non-spanning $3$-polytopes and show that they still satisfy the inequalities proved by Hofscheier et al.~for spanning polytopes, with the exception of empty tetrahedra that satisfy them only partially.\medskip

In dimensions $1$ and $2$, every lattice polytope contains a unimodular simplex, i.~e., a lattice basis, and is hence lattice-spanning.
In dimension $3$ it is easy to construct infinitely many lattice polytopes of width $1$ and of any index $q\in \N$, generalizing White's empty tetrahedra (\cite{White}). Indeed, for any positive integers $p,q,a,b$ with $\gcd(p,q)=1$ the lattice tetrahedron 
\[
T_{p,q}(a,b):=\conv\{(0,0,0),(a,0,0),(0,0,1),(bp,bq,1)\}
\]
has index $q$, width $1$, size $a+b+2$ and volume $abq$ (see a depiction of it in Figure~\ref{fig:width1_nonspanning}).
\begin{figure}[h]
\centerline{\includegraphics[scale=.6]{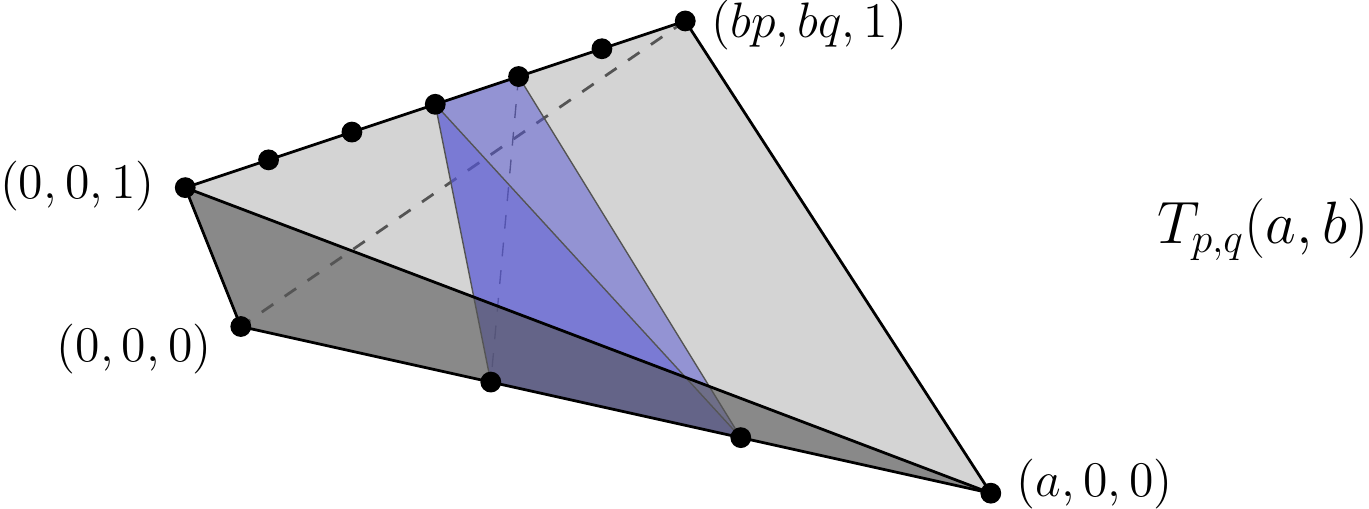}}
\caption{A polytope $T_{p,q}(a,b)$. An empty tetrahedron in it is highlighted.}
\label{fig:width1_nonspanning}
\end{figure}
Here and in the rest of the paper we consider the volume of lattice polytopes normalized to the lattice, so that it is always an integer and the normalized volume of a simplex $\conv(v_0, \dots, v_d)$ equals its determinant $\left\vert\det\begin{pmatrix}v_0 \cdots v_d \\ 1 \cdots 1\end{pmatrix}\right\vert$.

\begin{lemma}[Corollary~\ref{cor:width1-iff}]
\label{lemma:width1_intro}
Every non-spanning $3$-polytope of width one is isomorphic to some $T_{p,q}(a,b)$.
\end{lemma}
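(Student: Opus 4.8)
The plan is to prove that a width-one lattice $3$-polytope is non-spanning exactly when its lattice points form two segments lying in parallel consecutive lattice planes with non-parallel primitive directions, and then to put such a configuration into the normal form $T_{p,q}(a,b)$. First I would exploit the width-one hypothesis. After an isomorphism we may take the minimizing functional to be the last coordinate, so that $P\subseteq\R^2\times[0,1]$ and every lattice point of $P$ lies in one of the two bounding planes; writing $Q_i:=\{x\in\Z^2:(x,i)\in P\}$ for $i=0,1$, both slices are nonempty (else $P$ would be planar), $P\cap\Z^3=(Q_0\times\{0\})\cup(Q_1\times\{1\})$, and since $P$ is the convex hull of its vertices, which are lattice points, $P=\conv\big((Q_0\times\{0\})\cup(Q_1\times\{1\})\big)$. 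The key reduction is that the index of $P$ is a planar quantity: fixing $q_i\in Q_i$ and translating by $(-q_0,0)$, the linear lattice spanned by the differences of $P\cap\Z^3$ equals $(M\times\{0\})+\Z\cdot(q_1-q_0,1)$, where $M:=\gen{Q_0-q_0}+\gen{Q_1-q_1}\subseteq\Z^2$ is generated by the lattice-point differences inside the two slices; projecting onto the last coordinate shows that the index of $P$ equals $[\Z^2:M]$. Since $P\cap\Z^3$ affinely spans $\R^3$, the lattice $M$ has finite index, hence rank $2$.

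Next comes a short case analysis on $\dim\conv(Q_0)$ and $\dim\conv(Q_1)$. If some slice $Q_i$ is $2$-dimensional, then its lattice points generate the whole planar lattice (a full-dimensional lattice polygon contains a unimodular triangle), so $M=\Z^2$ and $P$ is spanning. If some slice is a single point, then $M$ is generated by the differences coming from the other slice alone, so $\operatorname{rank}M\le1$, contradicting the previous paragraph. Hence both slices are lattice segments; let $u,v$ be their primitive direction vectors and $a,b\ge1$ the numbers of lattice steps, so $Q_0=\{q_0+iu:0\le i\le a\}$ and $Q_1=\{q_1+jv:0\le j\le b\}$. Then $M=\Z u+\Z v$, which has finite index exactly when $u\not\parallel v$ --- equivalently, exactly when $P$ is genuinely $3$-dimensional rather than planar --- and in that case $[\Z^2:M]=|\det(u,v)|$. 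Thus a non-spanning width-one $3$-polytope is precisely the convex hull of two lattice segments in consecutive parallel planes whose directions are not parallel.

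It remains to normalize such a $P$. Translate so that $q_0=0$, then apply the (integral, unimodular) shear $(x,z)\mapsto(x-zq_1,z)$ to obtain $q_1=0$ as well; apply an element of $GL_2(\Z)$ on the first two coordinates carrying the primitive vector $u$ to $(1,0)$; finally, using the remaining shears $\left(\begin{smallmatrix}1&k\\0&1\end{smallmatrix}\right)$ and the coordinate reflections (and, if convenient, the plane-swap $z\mapsto 1-z$), bring the primitive vector $v$ to the form $(p,q)$ with $q\ge1$ and $\gcd(p,q)=1$; in the degenerate case where the reduction gives $q=1$ (forcing $p=0$), one extra shear moves $v$ to $(1,1)$, so $p$ and $q$ can always be taken positive. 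After these moves $P=\conv\{(0,0,0),(a,0,0),(0,0,1),(bp,bq,1)\}=T_{p,q}(a,b)$, and the formula $[\Z^2:M]=|\det((1,0),(p,q))|=q$ recovers the index stated for $T_{p,q}(a,b)$.

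The computations above are mostly routine. The two places needing real care are: (i) ruling out $0$- and $2$-dimensional slices --- precisely the two ways the argument degenerates, the first making $P$ spanning and the second contradicting finiteness of the index; and (ii) the final normalization, where one must check that the reductions genuinely land inside the family $T_{p,q}(a,b)$ with $p,q,a,b$ all positive, the case $q=1$ being the only subtlety. I expect (ii), rather than any single inequality, to be the main obstacle.
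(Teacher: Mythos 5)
Your proposal is correct and follows essentially the same route as the paper: width one puts all lattice points in two consecutive lattice planes, a $2$-dimensional slice would contain a unimodular triangle (hence force spanning), so non-spanning leaves two segments, which are then normalized by translations, shears and a $GL_2(\Z)$ change of coordinates to the form $T_{p,q}(a,b)$; your explicit index formula $[\Z^2:M]=|\det(u,v)|=q$ is a slightly more quantitative version of the paper's argument but not a different method. One small caveat: your opening ``exactly when'' is an overstatement, since two segments with non-parallel primitive directions can still span (namely when $|\det(u,v)|=1$), but this converse is never used, so the implication the lemma actually requires is proved correctly.
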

\smallskip

For larger width, the complete enumeration of lattice $3$-polytopes of width larger than one up to size $11$ shows that their index is always in $\{1,2,3,5\}$. The numbers of them for each index and size are as given in Table~\ref{table:numbers} (copied from Table~6 in~\cite{quasiminimals}).
\begin{table}[htb]
\begin{center}
\begin{tabular}{l|rrrrrrr}
{\bf size}      &  {\bf 5}   &  {\bf  6}  &    {\bf 7}  &   {\bf 8}  &   {\bf 9}  &   {\bf 10}  &   {\bf 11}\\
\hline
 {\bf index 1} & 7   &  71   &  486  & 2658  &   11680  &   45012 & 156436\rule{0pt}{3ex} \\
 {\bf index 2} & 0  &   2  &    8  &   14  &   15  &   19  &   24\\
 {\bf index 3} & 1   &   3  &    2  &    3   &   3   &   4    &  4\\
 {\bf index 5} & 1   &   0  &    0  &    0   &   0   &   0    &  0\\
\end{tabular}
\end{center}
\smallskip
\caption{Lattice $3$-polytopes of width $>1$ and size up to $11$, classified according to sublattice index. Table taken from~\cite{quasiminimals}.}
\label{table:numbers}
\end{table}
This data seems to indicate that apart from a few small exceptions there are about $\Theta(n)$ polytopes of index three, about $\Theta(n^2)$ of index two, and none of larger indices. Infinite families of lattice $3$-polytopes of indices two and three that match these asymptotics are given in the following statement:

\begin{lemma}[See Section~\ref{sec:families}]
\label{lemma:inf_families}
Let $S_{a,b}:=\conv\{(0,0,a),(0,0,b)\}$ be a lattice segment, with $a,b\in\Z$, $a\le 0 < b$.
Then the following are non-spanning lattice $3$-polytopes of width $>1$ (see pictures of them in Figure~\ref{fig:projs_families}):
\begin{itemize}
\item $\widetilde F_1(a,b):=\conv\left( S_{a,b}\cup \{(-1,-1,0), (2,-1,1), (-1,2,-1)\}\right)$, of index $3$.\smallskip
\item $\widetilde F_2(a,b):=\conv\left( S_{a,b}\cup \{(-1,-1,1), (1,-1,0), (-1,1,0)\}\right)$, with $(a,b)\neq(0,1)$, of index $2$. \smallskip
\item $\widetilde F_3(a,b,k):=\conv\left( S_{a,b}\cup \{(-1,-1,1), (1,-1,0), (-1,1,0),(1,1,2k-1)\}\right)$, for any $k\in \{0,\dots, b\}$ and $(a,b,k)\neq(0,1,1)$, of index $2$. \smallskip
\item $\widetilde F_4(a,b):=\conv\left( S_{a,b}\cup \{(-1,-1,1), (1,-1,0), (-1,1,0),(3,-1,-1)\}\right)$, of index 2.
\end{itemize}
\end{lemma}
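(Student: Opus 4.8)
The plan is to verify, for each of the four families and all admissible parameters, three things: (a) that it is a full-dimensional lattice polytope; (b) that it has width $>1$; (c) that its sublattice index equals the stated value $q\in\{2,3\}$. Claim (a) is immediate: in every case the three (or four) ``extra'' points are affinely independent and hence span a plane $H$, whereas the endpoint $(0,0,b)$ of $S_{a,b}$ satisfies $b\ge 1$ and so lies off $H$. The admissibility condition $a\le 0<b$ also guarantees that $(0,0,0)$ and $(0,0,1)$ both lie in $P\cap\Z^3$, a fact I will use repeatedly.

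For (b) I would split a primitive affine functional $f=(f_1,f_2,f_3)$ according to whether $f_3=0$. If $f_3=0$ then $f$ factors through the projection $\pi\colon\R^3\to\R^2$ that forgets the last coordinate, so the width of $P$ along $f$ equals the lattice width of $\pi(P)$; and one checks directly that $\pi(P)$ is a lattice translate of $3\Delta_2$ for $\widetilde F_1$, of $2\Delta_2$ for $\widetilde F_2$, of the square $[-1,1]^2$ for $\widetilde F_3$, and of $\conv\{(0,0),(4,0),(0,2)\}$ for $\widetilde F_4$, each of which has lattice width $\ge 2$. If $f_3\ne 0$ then the two endpoints of $S_{a,b}$ already spread $f$ by $|f_3|(b-a)\ge b-a$, which is $\ge 2$ unless $b-a=1$; since $a\le 0<b$ this forces $(a,b)=(0,1)$. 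The polytopes $\widetilde F_2(0,1)$ and $\widetilde F_3(0,1,1)$ are excluded by hypothesis (both in fact have width $1$, being sandwiched between the planes $z=0$ and $z=1$), and for the remaining finitely many cases $\widetilde F_1(0,1)$, $\widetilde F_3(0,1,0)$, $\widetilde F_4(0,1)$ a short direct argument works: if the width along $f=(f_1,f_2,\pm1)$ were $1$, then $(0,0,0)$ and $(0,0,1)$ force the image interval to equal $\{0,1\}$, the conditions ``$f$ maps each extra point into $\{0,1\}$'' then force $f_1=f_2=0$, and finally the extra point with third coordinate $-1$ has $f$-value $-1\notin\{0,1\}$, a contradiction.

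The core of the proof is (c), which I would establish by two inclusions of lattices. Let $\ell$ be the linear form $x-y$ for $\widetilde F_1$ and $x+y$ for $\widetilde F_2,\widetilde F_3,\widetilde F_4$, and set $\Lambda:=\{v\in\Z^3:\ell(v)\equiv 0\pmod q\}$, a sublattice of index $q$. The easy inclusion — that $\Lambda$ is generated by differences of lattice points of $P$ — holds because the extra points together with $(0,0,0)$ and $(0,0,1)$ all lie in $P\cap\Z^3$, and the differences of these few explicit points already span $\Lambda$ (one determinant computation per family). For the reverse inclusion $P\cap\Z^3\subseteq\Lambda$, note that $\ell\equiv 0$ on $S_{a,b}$ and $\ell$ takes values in $[-q,q]$ on the extra points, so $\ell(P)\subseteq[-q,q]$; it therefore suffices to show that the slice $P\cap\{\ell=r\}$ is lattice-point-free for each of the finitely many integers $r\in[-q,q]$ with $q\nmid r$. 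Since $\ell$ depends only on $(x,y)$, this slice projects into $\{\ell=r\}\cap\pi(P)$, which in each case is a lattice segment with at most two lattice points, both of them explicit; over each such point $(x_0,y_0)$ one computes the fibre $\{z:(x_0,y_0,z)\in P\}$ by writing $(x_0,y_0,z)$ as a convex combination of the vertices, observes that the convex coefficients are forced (up to the redundancy coming from $P$ not being a simplex), and concludes that $z$ is pinned to a value in $\tfrac1q\Z\setminus\Z$. Combining the two inclusions shows that the affine lattice generated by $P\cap\Z^3$ is exactly $\Lambda$, hence has index $q$.

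I expect the main obstacle to be this last step — the slice-by-slice verification that $P\cap\Z^3\subseteq\Lambda$ — carried out uniformly in $a,b$ (and in $k$ for $\widetilde F_3$), while keeping the signs and the excluded-parameter bookkeeping straight across all four families. The underlying reason the slices with $q\nmid\ell$ are empty is that away from the hyperplane $\{\ell=0\}$ the polytope $P$ tapers too fast to reach any further lattice point: morally, every lattice point of $P$ lies either on $S_{a,b}$ or in the small triangle $\conv\{(-1,-1,0),(2,-1,1),(-1,2,-1)\}$ (respectively $\conv\{(-1,-1,1),(1,-1,0),(-1,1,0)\}$), and both of these sets visibly lie inside $\Lambda$. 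Turning this heuristic into a proof, however, still requires the explicit convex-combination computations sketched above, and this is where the bulk of the routine work lies.
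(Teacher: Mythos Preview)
Your proposal is correct and follows essentially the same approach as the paper: the same sublattice $\Lambda=\{\ell\equiv 0\pmod q\}$ is used for the index, and the width argument splits along whether the functional is constant on the spike, exactly as in the paper. The only difference is one of detail: the paper simply asserts that the lattice points in each $\widetilde F_i$ generate $\Lambda$ (implicitly using that $P\cap\Z^3$ consists of the spike together with the listed three or four extra points, a fact stated in the introduction but not proved), whereas you supply an explicit slice-by-slice verification that $P\cap\{\ell=r\}$ contains no lattice points when $q\nmid r$. Your extra work is justified and goes through as you describe---in fact it simplifies, since in every family the lattice points of $\pi(P)$ with $q\nmid\ell$ all lie on the boundary of $\pi(P)$, so the fibre above each one is a single point on an edge of $P$, with $z$-coordinate visibly in $\tfrac1q\Z\setminus\Z$.
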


\begin{figure}[htb]
\centerline{
\quad $\widetilde F_1(a,b)$ \qquad \qquad \quad
\quad $\widetilde F_2(a,b)$ \qquad \quad
\quad $\widetilde F_3(a,b,k)$ \qquad \qquad
\quad $\widetilde F_4(a,b)$ 
\quad
}
\vspace{3ex}
\centerline{
\includegraphics[scale=.45]{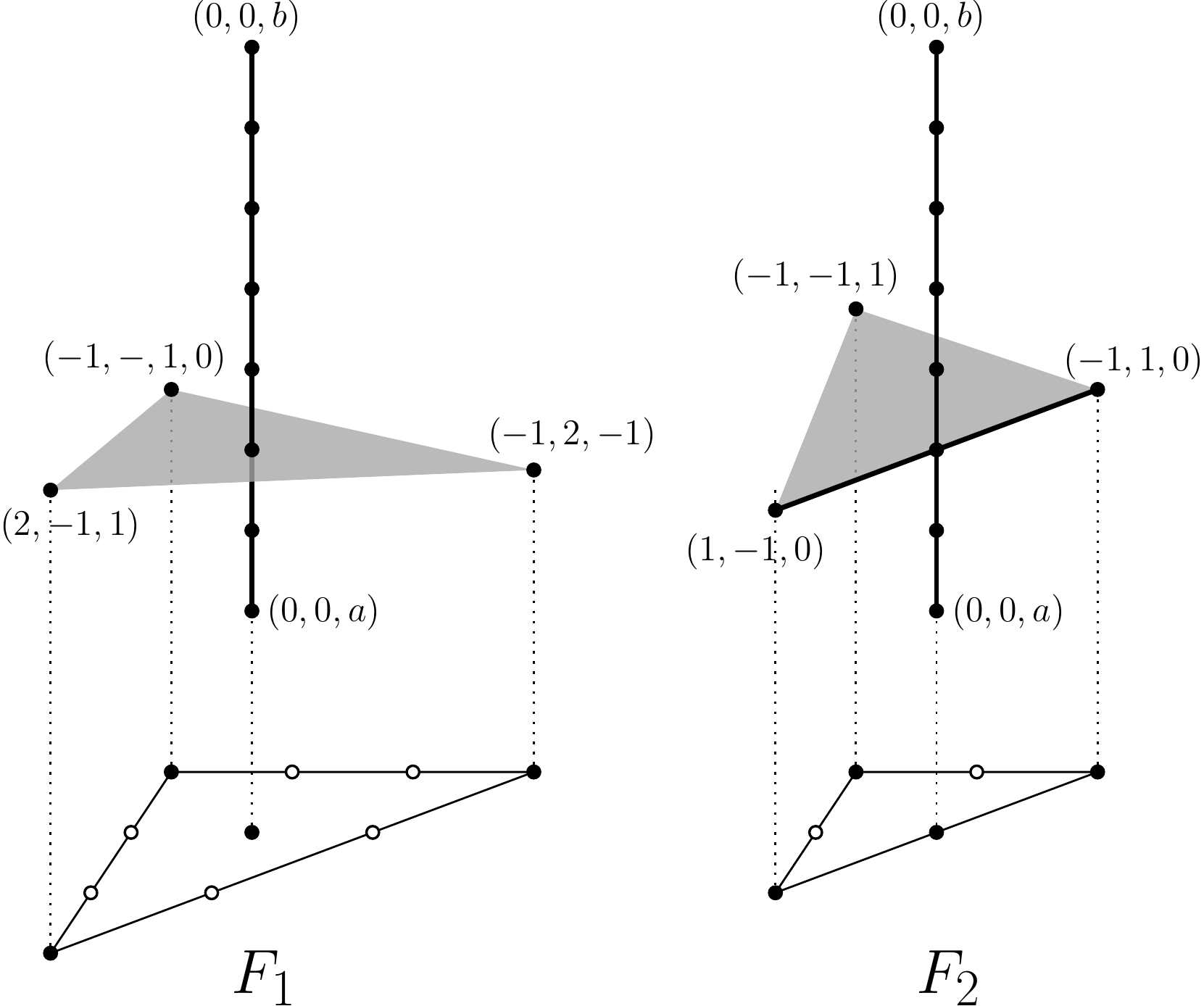}
\includegraphics[scale=.45]{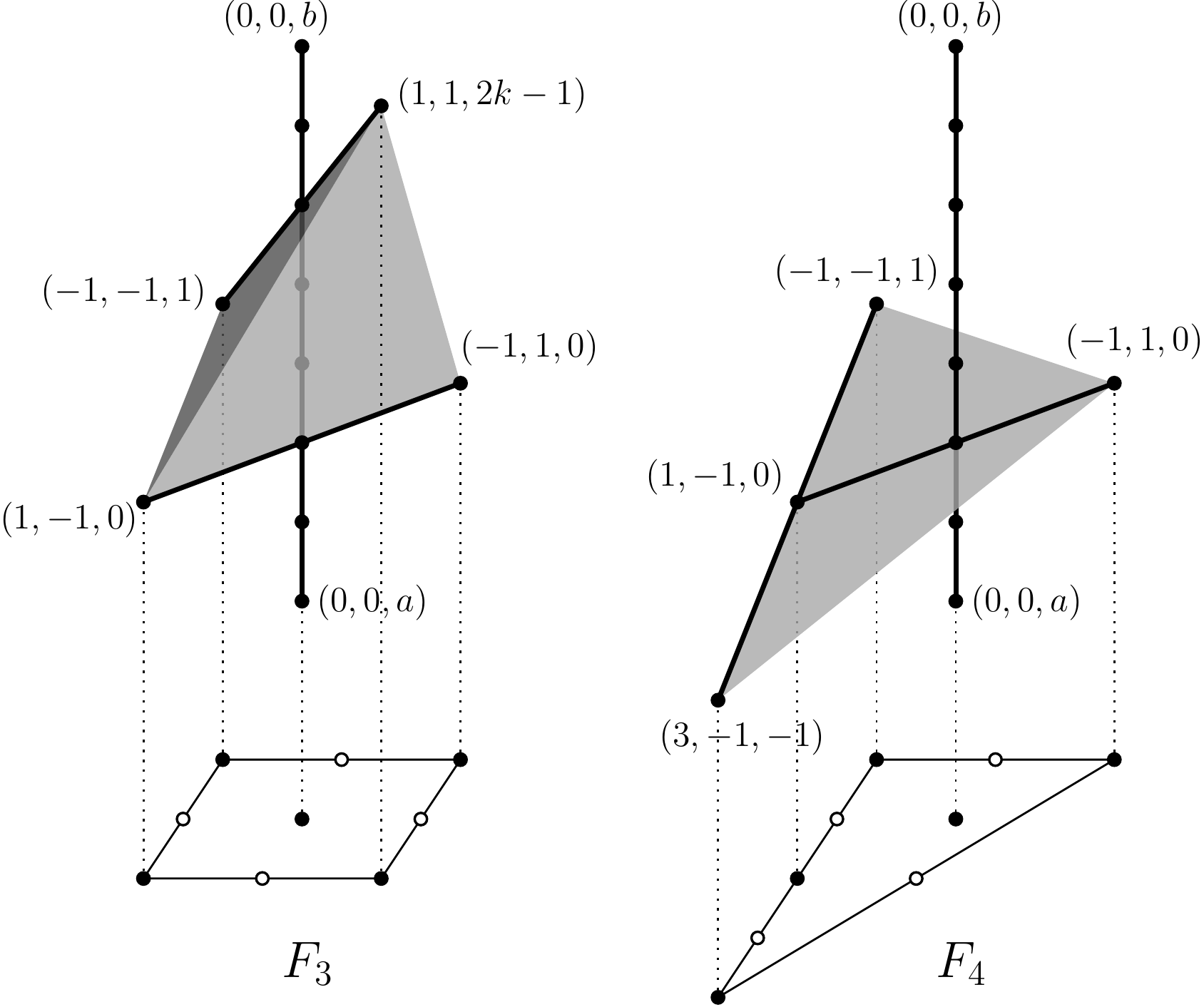}
}
\caption{The non-spanning $3$-polytopes of Lemma~\ref{lemma:inf_families}. Black dots in the top figures are the lattice points in $\widetilde F_i(a,b,\_)$. Black segments represent segments of at least three collinear lattice points, and the gray triangles or tetrahedron are the convex hulls of the lattice points not in the spike. 
The lattice point configurations $F_i$ arise as the projections of the configurations $\widetilde F_i(a,b,\_) \cap \Z^3$ in the direction of the spike.
Black dots are points of $F_i$ (projection of lattice points of $\widetilde F_i(a,b,\_)$), and white dots are lattice points in $\conv(F_i)\setminus F_i$.
}
\label{fig:projs_families}
\end{figure}

Observe that in all cases of Lemma~\ref{lemma:inf_families} the lattice points in the polytope are those in the segment $S_{a,b}$ plus three or four additional points. We call $S_{a,b}$ (or, sometimes, the line containing it) the \emph{spike} of $\widetilde F_i(a,b,\_)$, because it makes these polytopes be very closely related to (some of) the \emph{spiked polytopes} introduced in~\cite{quasiminimals} (see the proof of Theorem~\ref{thm:spiked-index} for some properties of them). 

For all polytopes in the same family $\widetilde F_i$ the projection of the lattice points along the direction of the spike is the same lattice point configuration, the $F_i$ depicted in the bottom of Figure~\ref{fig:projs_families}. The spike projects to the origin, and the other three or four points in each $\widetilde F_i(a,b,\_)$ project bijectively to three or four points in $F_i$.

The number of polytopes in the families of Lemma~\ref{lemma:inf_families}, taking redundancies into account, is computed in Corollary~\ref{coro:numbers}.
Table~\ref{table:numbers2} shows the result for sizes $5$ to $11$.
\begin{table}[htb]
\begin{center}
\begin{tabular}{l|rrrrrrr}
{\bf size}      &  {\bf 5}   &  {\bf  6}  &    {\bf 7}  &   {\bf 8}  &   {\bf 9}  &   {\bf 10}  &   {\bf 11}\\
\hline
 {\bf index 2} & 0  &   2  &    7  &   11  &   15  &   19  &   24\rule{0pt}{3ex}\\
 {\bf index 3} & 1   &   2  &    2  &    3   &   3   &   4    &  4\\
\end{tabular}
\end{center}
\smallskip
\caption{The number of polytopes of size $\le 11$ in the families of Lemma~\ref{lemma:inf_families}}
\label{table:numbers2}
\end{table}

Comparing those numbers to Table~\ref{table:numbers} we see that the only discrepancies are six missing polytopes in sizes five to eight. 
Our main result in this paper is that indeed, Lemma~\ref{lemma:inf_families} together with six exceptions gives the full list of non-spanning lattice $3$-polytopes of width larger than one.

\begin{theorem}[Classification of non-spanning $3$-polytopes]
\label{thm:main_exceptions}
Every non-spanning lattice $3$-polytope either has width one (hence it is isomorphic to a $T_{p,q}(a,b)$ by Lemma~\ref{lemma:width1_intro}) or is isomorphic to one in the infinite families of Lemma~\ref{lemma:inf_families}, or is isomorphic to one of the following six polytopes, depicted in Figure~\ref{fig:exceptions_intro}:
\begin{itemize}
\item Two tetrahedra of sizes $5$ and $6$, with indices $5$ and $3$ respectively:
\[
E_{(5,5)}:=\conv\{
(0, -2, 1), (1, 0, -1), (1, 1, 1), (-2, 1, -1)\}.
\]
% More symmetric coordinates: (1 2 2) (2 1 2) (2 2 1) (-15 -15 -15)
\[
E_{(6,3)}:=\conv\{
(1,0,0), (-1,-1,0), (1,2,3),(-1,1,-3)\}.
% smaller and slightly more symmetric coordinates: (-2 -1 -1) (2 1 0) (1 2 1) (-1 -2 2) (contained in [-1, 2]^3
\]

\item A pyramid and two double pyramids over the following triangle of size $6$:
\[
B:=\conv\left(\{(-1,-1,0),(2,0,0),(1,2,0)\}\right).
\]
Namely, the following polytopes of sizes 7 and 8, all of index $2$:
\[
E_{(7,2)}:=\conv(B\cup\{(0,-1,2)\})
\]
\[
E_{(8,2)}^1:=\conv(B\cup\{(0,-1,2),(0,1,-2)\}),
\]
\[ 
E_{(8,2)}^2:=\conv(B\cup\{(0,-1,2),(-2,-1,-2)\}).
\]
 
\item A tetrahedron of size $8$ and index $2$:
\[
E_{(8,2)}^3:=\conv\{(0,-1,-1), (2,0,2), (1,2,-1),(-1,1,2)\}.
\] 
\end{itemize}
\end{theorem}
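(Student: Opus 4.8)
The plan is to reduce the classification to a bounded computation by exploiting the structure that non-spanning polytopes must have, and then to match the output of that computation against the families of Lemma~\ref{lemma:inf_families}. The starting point is the dichotomy in width: by Lemma~\ref{lemma:width1_intro} the width-one case is already settled, so we may assume $\width(P) > 1$. For such $P$, the complete enumeration of lattice $3$-polytopes of size $\le 11$ (from \cite{5points,6points,quasiminimals}, summarized in Table~\ref{table:numbers}) already handles all small polytopes; comparing with Table~\ref{table:numbers2} we see exactly the six discrepancies claimed, so the six exceptional polytopes $E_{(5,5)}$, $E_{(6,3)}$, $E_{(7,2)}$, $E_{(8,2)}^1$, $E_{(8,2)}^2$, $E_{(8,2)}^3$ arise precisely here and all other non-spanning $3$-polytopes of size $\le 11$ and width $>1$ lie in the families $\widetilde F_i$. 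Thus the real content is to show that \emph{every} non-spanning $3$-polytope of width $>1$ and size $\ge 12$ belongs to one of the infinite families.

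First I would set up a "spike" argument. Given a non-spanning $P$ of width $>1$, let $\Lambda$ be the affine sublattice spanned by $P \cap \Z^3$, of index $q > 1$. The key geometric input is that $P$ must be "long and thin": because the index is $>1$, the lattice points of $P$ are confined to a relatively sparse sublattice, which forces $P$ to contain a long collinear chain of lattice points (a spike), with only a bounded number of lattice points off that line. I would make this precise using the structure theory of spiked polytopes from \cite{quasiminimals} (cited as available via Theorem~\ref{thm:spiked-index}): once the size exceeds some threshold, $P$ is forced to be a spiked polytope, so $P \cap \Z^3$ consists of a segment $S$ of $\ge 3$ collinear lattice points together with at most a bounded number (here, three or four) of additional lattice points. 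Projecting along the direction of the spike then yields a planar lattice point configuration $F$ with a bounded number of points, namely the projection of the off-spike points plus the origin (image of $S$).

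Next I would classify the possible projected configurations $F$. Since $F$ has at most five points (origin plus three or four others) and $P$ is non-spanning of index $q \in \{2,3\}$ — using that the enumeration shows index $>3$ does not occur beyond the size-five exception $E_{(5,5)}$ — the sublattice-index condition translates into strong divisibility constraints on the coordinates of the off-spike points and on the "heights" at which they sit along the spike direction. Enumerating the finitely many possibilities for $F$ up to the symmetries of the situation, and for each one determining which height assignments keep the index equal to $q$ and the width $>1$, should produce exactly the four configurations $F_1, F_2, F_3, F_4$ and exactly the parametrized lifts $\widetilde F_i(a,b,\_)$. Here one has to be careful that different $(a,b)$ (or $(a,b,k)$) can give isomorphic polytopes and that a few degenerate parameter values (such as $(a,b)=(0,1)$ for $\widetilde F_2$, or $(a,b,k)=(0,1,1)$ for $\widetilde F_3$) must be excluded because they drop to width one or coincide with another family member — this bookkeeping is exactly what Corollary~\ref{coro:numbers} records.

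The main obstacle I anticipate is the transition step: rigorously proving that above some explicit size bound every non-spanning $3$-polytope of width $>1$ \emph{must} be spiked, with the off-spike part bounded by four points. This is where one genuinely needs the machinery of \cite{quasiminimals} rather than a short self-contained argument; the delicate point is controlling both the length of the spike and, simultaneously, the number and location of the stray lattice points, while keeping track of the index. A secondary but still nontrivial difficulty is making the case analysis on $F$ genuinely exhaustive — one must argue that no other planar configuration, however the heights are chosen, can yield a non-spanning polytope of width $>1$ — and then checking that the six sporadic polytopes are not isomorphic to any family member, which is a finite but somewhat tedious verification best done by computing an invariant such as the $h^*$-vector (carried out anyway in Section~\ref{sec:hstar}) together with the size and index.
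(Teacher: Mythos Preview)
Your outline agrees with the paper through the first two steps (width one via Lemma~\ref{lemma:width1_intro}; size $\le 11$ via the enumeration and Table~\ref{table:numbers} versus Table~\ref{table:numbers2}), but the mechanism you propose for large size is not the one the paper uses, and as written it has a genuine gap.

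You want to invoke Theorem~\ref{thm:spiked-index} to conclude that any non-spanning $P$ of large size is spiked (a long collinear segment plus three or four stray points), and then classify the possible projections $F$. But Theorem~\ref{thm:spiked-index} carries the explicit hypothesis that $P$ is \emph{not merged}; under that hypothesis it only yields $\widetilde F_1(0,n-4)$ or $\widetilde F_2(0,n-4)$ (three off-spike points, never four), and the projected configurations $F_3$, $F_4$ do not occur among the spiked-polytope projections $A'_1,\dots,A'_{10}$ of~\cite{quasiminimals}. In particular, the polytopes in the families $\widetilde F_3$, $\widetilde F_4$, and even $\widetilde F_i(a,b)$ with $a<0$, are \emph{merged}, so the spiked-polytope structure theory says nothing about them directly. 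Your anticipated ``main obstacle'' is therefore not merely a technical check but the heart of the matter, and Theorem~\ref{thm:spiked-index} alone does not bridge it.

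What the paper actually does for size $\ge 12$ is an induction on $n$ through the merged/non-merged dichotomy. Non-merged $P$ are handled by Theorem~\ref{thm:spiked-index}. For merged $P$ one picks vertices $u,v$ with $P^u$, $P^v$ of width $>1$ and $P^{uv}$ full-dimensional; since index can only go up when passing to $P^u$ and $P^v$ (Lemma~\ref{lemma:index} and Corollary~\ref{coro:width>1-index}), the inductive hypothesis places $P^u$ and $P^v$ in the families $\widetilde F_i$. One then argues geometrically that the common spike of $P^u$ and $P^v$ must coincide and that the positions of $u$ and $v$ relative to $P^{uv}$ are so constrained that $P$ again lands in one of the families (Theorems~\ref{thm:index3} and~\ref{thm:index2}). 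This inductive merging step, not a direct spike classification, is what produces $\widetilde F_3$ and $\widetilde F_4$; your proposal is missing it.
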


\begin{figure}[h]
\centerline{
\begin{tabular}{c|c}
\hline
\includegraphics[scale=.6]{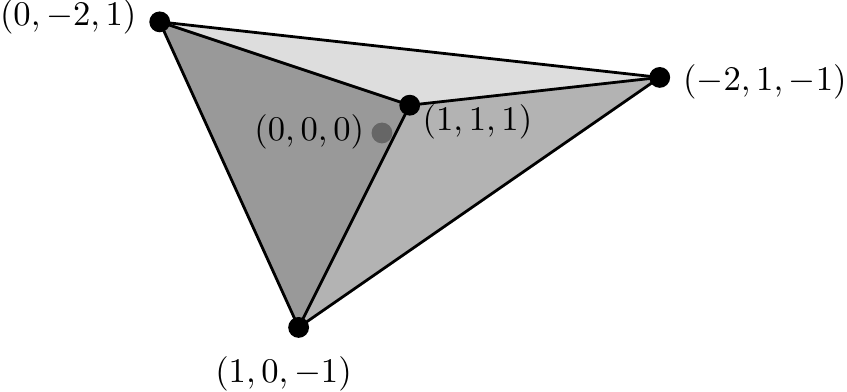}
&\includegraphics[scale=.6]{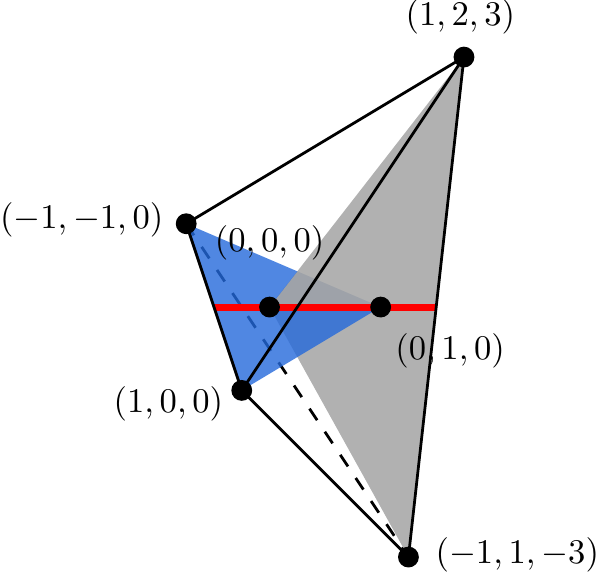}\rule{0pt}{24ex}
\\ 
\large$E_{(5,5)}$ & $E_{(6,3)}$\rule[-2ex]{0pt}{6ex}\\
\end{tabular}
}\hrule

\centerline{
\begin{tabular}{c|c|c}
\includegraphics[scale=.6]{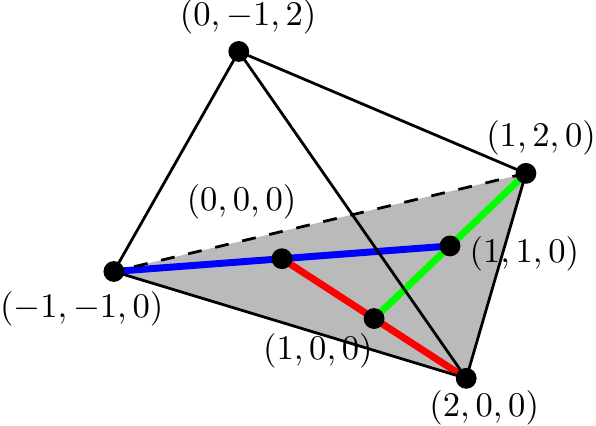}\quad
&
\includegraphics[scale=.6]{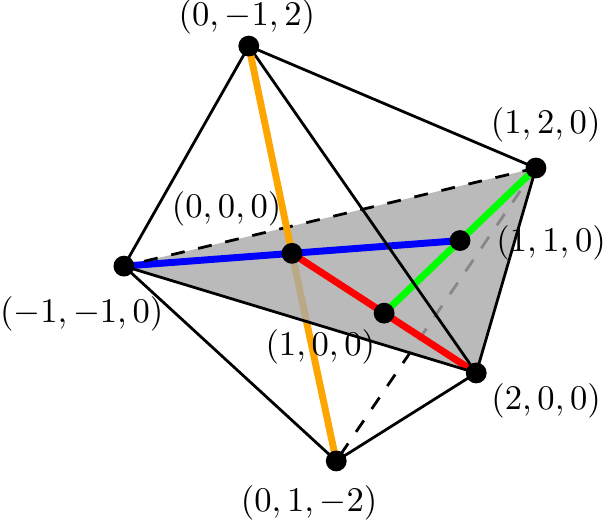}\quad
&
\includegraphics[scale=.6]{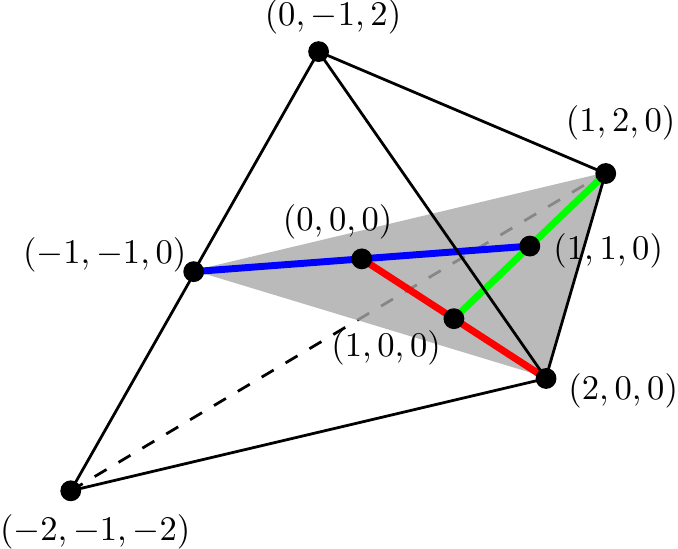}\rule{0pt}{24ex}
\\
\large$E_{(7,2)}$ & $E_{(8,2)}^1$ & $E_{(8,2)}^2$ \rule[-2ex]{0pt}{6ex}\\
\end{tabular}
}\hrule

\centerline{
\includegraphics[scale=.6]{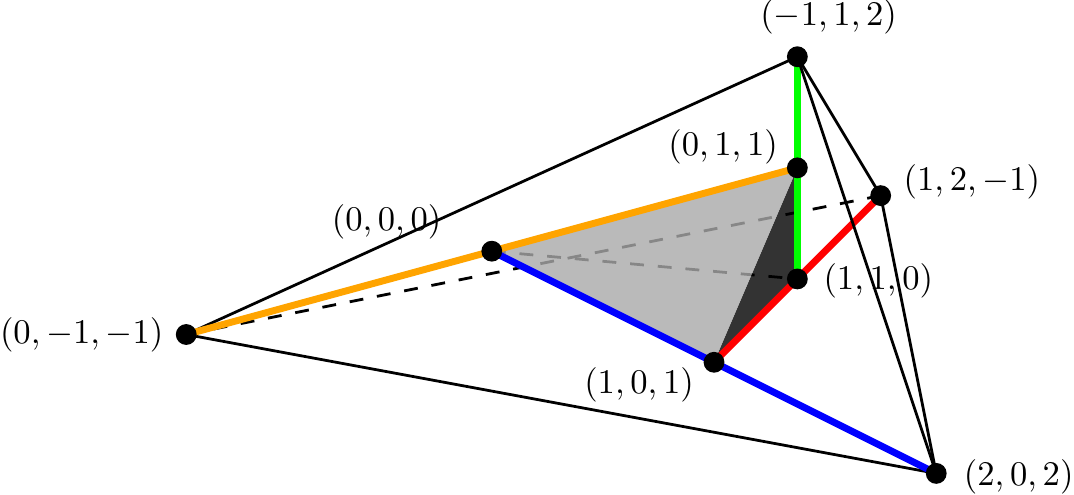}\rule{0pt}{23ex}
}
\centerline{\large $E_{(8,2)}^3$\rule[-2ex]{0pt}{6ex}}
\hrule
\normalsize

\caption{The six non-spanning $3$-polytopes of Theorem~\ref{thm:main_exceptions}.}
\label{fig:exceptions_intro}
\end{figure}

\begin{corollary}[Corollary~\ref{coro:numbers}]
\label{coro:main}
For any $n\ge 9$ all non-spanning $3$-polytopes of size $n$ and width larger than one belong to the families of Lemma~\ref{lemma:inf_families}. There are $\left\lfloor {(n-3)(n+1)}/{4}\right\rfloor$ of index two, $\lceil \frac{n-3}{2}\rceil$ of index three, and none of larger index.
\qed
\end{corollary}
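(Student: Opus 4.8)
The first assertion is immediate from Theorem~\ref{thm:main_exceptions}: the six sporadic polytopes listed there all have size at most $8$, and polytopes of width one are excluded by hypothesis; hence for $n\ge 9$ every non-spanning lattice $3$-polytope of size $n$ and width $>1$ lies in one of the four families $\widetilde F_1,\dots,\widetilde F_4$ of Lemma~\ref{lemma:inf_families}. Since every member of those families has index $2$ or $3$, in particular there are none of larger index.

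It remains to count, for a fixed $n\ge 9$, the pairwise non-isomorphic polytopes of each index. The engine is that the segment $S_{a,b}$ (the \emph{spike}) is an intrinsic invariant of $\widetilde F_i(a,b,\_)$: for $n\ge 9$ it is the \emph{unique} inclusion-maximal collinear subset of $\widetilde F_i(a,b,\_)\cap\Z^3$, since it has $n-3$ or $n-4$ points whereas, by direct inspection of the finite configurations $F_i$, every other line meets $\widetilde F_i(a,b,\_)\cap\Z^3$ in at most three points. Consequently any isomorphism between polytopes in these families carries spike to spike, so it is induced by a lattice affine automorphism of the restricted shape $(x,y,z)\mapsto(A(x,y),\,w\!\cdot\!(x,y)+\varepsilon z+c)$ with $A\in GL_2(\Z)$, $w\in\Z^2$, $\varepsilon\in\{\pm1\}$, $c\in\Z$, where moreover $A$ is an automorphism of the planar point configuration $F_i$ (the projection of $\widetilde F_i(a,b,\_)\cap\Z^3$ along the spike) fixing the projected spike, the origin. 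These planar automorphism groups are easily identified: $\operatorname{Aut}(F_1)=S_3$ (the origin is the centroid of the triangle $\conv F_1$), $\operatorname{Aut}(F_3)=D_4$ (the origin is the center of the square $F_3$), and $\operatorname{Aut}(F_2)\cong\operatorname{Aut}(F_4)\cong\Z/2$. For each $A$ in these groups one solves a small linear system to decide which $\varepsilon,w,c$ produce a genuine isomorphism onto another member of the family, and records the induced action on the height parameters. One finds that in each family the only nontrivial identification is the one coming from a reflection of the spike:
\[
\widetilde F_1(a,b)\cong\widetilde F_1(-b,-a),\quad
\widetilde F_2(a,b)\cong\widetilde F_2(-b,-a),\quad
\widetilde F_4(a,b)\cong\widetilde F_4(-b,-a),\quad
\widetilde F_3(a,b,k)\cong\widetilde F_3(k-b,k-a,k);
\]
and there are no isomorphisms across different families: the index separates $\widetilde F_1$ from the rest, a size-$n$ member of $\widetilde F_2$ has a spike of $n-3$ points against $n-4$ for $\widetilde F_3$ and $\widetilde F_4$, and $\conv F_3$ is a quadrilateral while $\conv F_4$ is a triangle.

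Granting this, the enumeration reduces to counting orbits. For index $3$ (only $\widetilde F_1$), a size-$n$ member corresponds to an integer $b\in\{0,1,\dots,n-4\}$ (with $a=b-(n-4)$), and the involution above acts as $b\mapsto n-4-b$, giving $\lceil(n-3)/2\rceil$ polytopes. For index $2$, the one-parameter families $\widetilde F_2$ and $\widetilde F_4$ yield $\lceil(n-3)/2\rceil$ and $\lceil(n-4)/2\rceil$ polytopes respectively, hence $n-3$ in total; the two-parameter family $\widetilde F_3$, whose size-$n$ members are parametrized by $0\le k\le b$ with $b-a=n-5$ and $b\in\{0,\dots,n-5\}$ (that is, $\tfrac{(n-4)(n-3)}{2}$ tuples), carries the involution $(a,b,k)\mapsto(k-b,k-a,k)$, whose fixed locus $\{a+b=k\}$ has $\lfloor(n-3)/2\rfloor$ elements, so it contributes $\lfloor(n-3)^2/4\rfloor$ polytopes. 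Summing, $(n-3)+\lfloor(n-3)^2/4\rfloor=\lfloor(n-3)(n+1)/4\rfloor$, which completes the count.

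The main obstacle is the middle step: carrying out, case by case over $\operatorname{Aut}(F_i)$ and the sign $\varepsilon$, the lift of each planar symmetry to $\R^3$, the solvability analysis of the accompanying linear system, and the translation into an explicit action on $(a,b,k)$; one must also dispatch the parameter boundary cases ($a=0$, $k\in\{0,b\}$, and the already-excluded tuples $(0,1)$ and $(0,1,1)$) and, crucially, see why the clean formula requires $n\ge 9$: for $n\le 7$ the spike stops being the unique longest collinear set (in $\widetilde F_3$ a ``diagonal'' line already carries three lattice points, tying the spike when $n=7$), producing extra identifications, while at $n=8$ the formula does govern the families but the statement still fails because of the three sporadic size-$8$ exceptions.
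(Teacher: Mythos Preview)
Your proof is correct and follows essentially the same route as the paper: the first assertion is read off from Theorem~\ref{thm:main_exceptions}, and the counts are obtained exactly as in Proposition~\ref{prop:redundancy} and Corollary~\ref{coro:numbers}, using that for large size the spike is the unique longest collinear subset so that any isomorphism must preserve it, then classifying the residual identifications and counting orbits. Your framing via the automorphism groups $\operatorname{Aut}(F_i)$ is a mild stylistic variant of the paper's direct invariant argument (position of the ``origin'' along the spike, number and collinearity pattern of non-spike points), but it leads to the same list of identifications and the same arithmetic.
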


\smallskip

As a consequence of our classification for non-spanning $3$-polytopes we have the following statement. Recall that an \emph{empty} tetrahedron is a lattice tetrahedron whose only lattice points are its vertices.

\begin{corollary}
\label{coro:volume_i}
Let $P$ be a non-spanning lattice $3$-polytope. Then the collection of all empty tetrahedra in $P$ is a triangulation, and all such tetrahedra have volume equal to the index of $P$.
\end{corollary}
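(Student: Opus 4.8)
The plan is to verify the statement on each polytope of the classification: on $T_{p,q}(a,b)$ (Lemma~\ref{lemma:width1_intro}), on the families $\widetilde F_1,\dots,\widetilde F_4$ (Lemma~\ref{lemma:inf_families}), and on the six exceptions of Theorem~\ref{thm:main_exceptions}; since the property is invariant under unimodular equivalence, this suffices. Two observations are used throughout. First, if $P$ has index $q$ and $\Lambda$ is the affine lattice spanned by $P\cap\Z^3$, then $P\cap\Z^3=P\cap\Lambda$, so every empty tetrahedron $T\subseteq P$ has all of its $\Z^3$-lattice points in $\Lambda$; being a lattice simplex for $\Lambda$ it has integral $\Lambda$-volume, and $\vol(T)$ equals $q$ times that, hence is a positive multiple of $q$. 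So the real content of ``$\vol(T)=q$'' is the upper bound $\vol(T)\le q$, equivalently that $T$ is unimodular with respect to $\Lambda$. Second, an empty tetrahedron contains no three collinear lattice points; thus from any maximal string of collinear lattice points of $P$ it can take at most two vertices, and if it takes two they must be consecutive. This already restricts severely the possible vertex sets of empty tetrahedra.

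For width one we may take $P=T_{p,q}(a,b)$, whose lattice points lie on two primitive skew segments $\ell_0\subseteq\{z=0\}$ and $\ell_1\subseteq\{z=1\}$, with no lattice point in between. By the second observation an empty tetrahedron must have exactly two vertices on $\ell_0$ and two on $\ell_1$ (three in one plane would be collinear), each pair consecutive; a determinant computation gives volume $q$ for each such tetrahedron, and there are exactly $ab$ of them. Any triangulation of $P$ whose vertex set is $P\cap\Z^3$ has empty tetrahedra as its maximal cells (a lattice point inside a cell would be an unused vertex), so it consists of exactly $ab$ of the tetrahedra just described, that is, of all of them. Since such triangulations exist, the collection of all empty tetrahedra of $P$ is the (unique) triangulation of $P$, and the statement follows in this case.

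For the infinite families the tool is the projection $\pi$ along the direction of the spike $S_{a,b}$: it sends the spike to a single point $o$ and the remaining lattice points $p_1,\dots,p_m$ ($m\in\{3,4\}$) bijectively to the planar configurations $F_i$ of Figure~\ref{fig:projs_families}; writing $q_j=\pi(p_j)$, the index $q$ of $\widetilde F_i$ equals the sublattice index of $\{o,q_1,\dots,q_m\}$ in $\Z^2$. By the two observations, an empty tetrahedron of $\widetilde F_i(a,b,\_)$ is of one of three types according to the number of its vertices on the spike: two consecutive spike points plus two outer points; one spike point plus three outer points; or (only in $\widetilde F_3,\widetilde F_4$) four outer points. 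A tetrahedron of the first type projects onto the triangle $\conv\{o,q_i,q_j\}$ and has volume equal to the normalized area of that triangle; one checks family by family that these areas all equal $q$, while the pairs with $o$ on the segment $[q_i,q_j]$ give degenerate configurations rather than tetrahedra. The other two types never produce empty tetrahedra: with the explicit coordinates of the $p_j$, the affine hull of any three outer points meets the spike line precisely at the lattice point where the spike crosses the fibre of $o$, so such a tetrahedron is either degenerate or contains an interior lattice point of the spike. Finally, for each admissible pair $\{i,j\}$ the empty tetrahedra arising from the successive unit sub-segments of the spike triangulate the ``slab'' $\conv(S_{a,b}\cup\{p_i,p_j\})$, and these slabs meet along facets exactly as the triangles $\conv\{o,q_i,q_j\}$ meet to triangulate $\conv(F_i)$; assembling this exhibits the empty tetrahedra of $\widetilde F_i(a,b,\_)$ as the maximal cells of a triangulation, all of volume $q$.

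The six exceptions are handled by a finite computation: for each one, list $P\cap\Z^3$, enumerate the $4$-subsets spanning a genuine tetrahedron, discard the non-empty ones, and check that the rest all have volume equal to the index and fit together into a triangulation. (For instance $E_{(5,5)}$ has a single extra lattice point, its centroid $(0,0,0)$, and is the cone from it over its four facets, yielding four empty tetrahedra of volume $5$; the other exceptions decompose similarly as small fans or joins over their extra lattice points.) The step I expect to be the real obstacle lies in the infinite families: proving rigorously and uniformly in $a,b,k$ that \emph{every} empty tetrahedron is of the ``two consecutive spike points plus two outer points'' type (ruling the other combinatorial types out once and for all), and that the slab decomposition is a genuine polyhedral subdivision of $\widetilde F_i(a,b,\_)$ rather than merely a covering with overlaps. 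The exceptions, by contrast, are only a bookkeeping exercise.
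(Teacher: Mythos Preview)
Your proposal is correct and follows essentially the same route as the paper: a case analysis over the classification, showing for each $\widetilde F_i$ that any three non-spike points have their affine span meeting the spike in a lattice point (so every empty tetrahedron has two consecutive spike vertices and two non-spike vertices), and leaving the six exceptions to direct inspection. Your initial observation that $\vol(T)$ is automatically a multiple of $q$ is a nice addition not made explicit in the paper, and your concern about the ``slab'' assembly being a genuine triangulation is handled in the paper simply by naming it as the join of the path along the spike with a path (for $\widetilde F_2$) or cycle (for $\widetilde F_1,\widetilde F_3,\widetilde F_4$) on the non-spike points---the same subdivision you describe, phrased so that the simplicial-complex property is immediate.
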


Observe that the triangulation in the statement is necessarily the unique triangulation of $P$ with vertex set equal to $P\cap \Z^3$.

\begin{proof}
\begin{itemize}

\item For $T_{p,q}(a,b)$, since all lattice points lie on two lines, every empty tetrahedron has as vertices two consecutive lattice points on each of the lines. These tetrahedra have volume equal to the index (in this case $q$) and, together, form a triangulation: the join of the two paths along the lines.

\item Every polytope in the families $\widetilde F_i$ has the property that the triangle formed by any three lattice points outside the spike meets the spike at a lattice point. Thus, every empty tetrahedron has two vertices along the spike and two outside the spike. The collection of all of such tetrahedra forms a triangulation: the join of the path along the spike and a path (for the polytopes in the family $\widetilde F_2$) or a cycle (for the rest) outside the spike. Also, these tetrahedra have the volume of a triangle with vertices in the projection $F_i$ that has the origin as a vertex. Any such triangle in $F_i$ has the same volume as the index of the polytopes in the family $\widetilde F_i$.
\item For the six exceptions of Theorem~\ref{thm:main_exceptions}, the statement can be checked with geometric arguments or with computer help. Details are left to the reader.
\end{itemize}
\end{proof}

Corollary~\ref{coro:volume_i} is not true in dimension $4$, as the following example shows:
\begin{example}
\rm
Let
\[P=\conv\left\{
\begin{array}{rrrr}
(\,\;\; 1,& 0,& 0,& 0 ),\\
(\,\;\; 0,& 1,& 0,& 0 ),\\
(\,\;\; 0,& 0,& 1,& 0 ),\\
(-2,&-1,&-1,& 0 ),\\
(\,\;\; 1,& 1,& 1,& 2 )\,\,
\end{array}
\right\}
\]
This is a lattice $4$-simplex with six lattice points: its five vertices plus the origin, which lies in the relative interior of the facet given by the first four vertices. Thus, the volumes of the empty tetrahedra in $P$ are the absolute values of the $4\times 4$ minors in the matrix (excluding the zero minor). These values are four, two, two and two, and $P$ has index two.
We thank Gabriele Balletti for providing (a variation of) this counterexample.
\end{example}

In the same vein, one can ask whether all spanning $3$-polytopes have a unimodular tetrahedron. The answer is that only two do not.

\begin{theorem}[See Section~\ref{sec:spanning}]
\label{thm:no_uni_tetra}
The only lattice-spanning $3$-polytopes not containing a unimodular tetrahedron are the following two tetrahedra of size five:
\begin{itemize}
\item $E_{(5,1)}^1:=\conv\{
(1,0,0), (0,0,1),(2,7,1),(-1,-2,-1)\}$ with four empty tetrahedra of volumes $2$, $3$, $5$ and $7$.

\item $E_{(5,1)}^2:=\conv\{
(1,0,0), (0,0,1),(3,7,1),(-2,-3,-1)\}$ with four empty tetrahedra of volumes $3$, $4$, $5$ and $7$.

\end{itemize}
\end{theorem}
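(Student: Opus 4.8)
The plan is to reduce the statement to a finite check and then combine it with the classification work already available. First I would observe that a spanning $3$-polytope $P$ contains a unimodular tetrahedron if and only if it contains an empty tetrahedron of volume $1$, so the claim is about spanning $3$-polytopes all of whose empty tetrahedra have volume $\ge 2$. A key structural input is that a lattice $3$-polytope of width $1$ always contains a unimodular tetrahedron unless... — actually, for width-one polytopes the two lattice planes each carry lattice polygons, and any lattice polygon contains a unimodular triangle, so one gets a unimodular tetrahedron by joining a unimodular triangle in one plane with a vertex in the other plane, \emph{provided} there is such a triangle at unit lattice distance; since the two planes are consecutive, this always works. Hence width-one spanning $3$-polytopes always contain a unimodular tetrahedron, and we may assume $\width(P)\ge 2$.

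Next I would invoke finiteness: by the theorem of~\cite{5points} quoted in the introduction, for each $n$ there are finitely many lattice $3$-polytopes of width $>1$ and size $n$, and the enumeration up to size $11$ is available from~\cite{5points,6points,quasiminimals}. The strategy is therefore (i) to show that any spanning $3$-polytope of width $>1$ and size $\ge 6$ contains a unimodular tetrahedron, reducing the problem to size $5$; and (ii) to check the (seven, by Table~\ref{table:numbers}) spanning $3$-polytopes of size $5$ and width $>1$ directly, finding exactly the two exceptions $E_{(5,1)}^1$ and $E_{(5,1)}^2$. For step (i) the natural tool is a "one more point" argument in the style of the earlier papers: every lattice $3$-polytope of size $n\ge 6$ and width $>1$ is obtained from one of size $n-1$ by adding a lattice point, and one shows that adding a point to a configuration that already spans cannot destroy all unimodular tetrahedra — more precisely, if $P$ is spanning of size $\ge 6$ then it has a subconfiguration of $5$ or $6$ points that is spanning and "small", and one checks those base cases contain a unimodular tetrahedron unless they are among the two exceptions, which however are not sub-configurations of any larger spanning polytope of width $>1$ (this last point uses the size-$6$ enumeration).

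The main obstacle is making step (i) rigorous without simply quoting a machine computation: one must argue that the two five-point exceptions do not extend, and that no new obstruction appears at sizes $6,7,\dots$ I expect the cleanest route is to use the classification of non-spanning $3$-polytopes (Theorem~\ref{thm:main_exceptions}) in reverse: a spanning $P$ with no unimodular tetrahedron has all empty tetrahedra of volume $\ge 2$, so in particular it cannot contain three lattice points spanning (together with a fourth) a unimodular tetrahedron; analyzing which spanning configurations can avoid this forces $P$ to be very constrained — essentially one shows the lattice points of $P$ lie on few lines, as in the non-spanning case, and then a direct volume computation (the empty tetrahedra of $E_{(5,1)}^1$ have volumes $2,3,5,7$ and those of $E_{(5,1)}^2$ have volumes $3,4,5,7$, none equal to $1$) together with the observation that adding any further lattice point creates a unimodular tetrahedron finishes the proof. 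The remaining verifications — that these two polytopes indeed span $\Z^3$, that their four empty tetrahedra are as listed and triangulate them, and that they admit no lattice-point extension of width $>1$ without acquiring a unimodular tetrahedron — are finite and can be carried out by hand or deferred to the computer, and details would be left to the reader.
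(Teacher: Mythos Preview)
Your width-one reduction is correct and matches the paper. The gap is in your inductive step for width $>1$. Your ``one more point'' argument assumes that a spanning $P$ of size $n\ge 6$ and width $>1$ contains a spanning subpolytope $P^v$ of size $n-1$ and width $>1$ to which induction applies; but nothing you wrote establishes this. Removing a vertex can drop the width to one, drop the dimension, or---and this is the real issue---produce a \emph{non}-spanning $P^v$, in which case the inductive hypothesis says nothing about unimodular tetrahedra in $P^v$. Your fallback (``the two five-point exceptions do not extend, and no new obstruction appears at sizes $6,7,\dots$'') is not an argument; it is a restatement of what must be proved for infinitely many sizes.

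The paper closes exactly this gap with two ingredients you are missing. First, the merged/non-merged dichotomy: non-merged polytopes of size $\ge 8$ are handled directly by Theorem~\ref{thm:spiked-index}, and the base cases $n\le 7$ by enumeration. Second, and crucially, Lemma~\ref{lemma:subpolytopeofindex>1} (a consequence of the classification via Corollary~\ref{coro:volume_i}): if $Q$ is non-spanning and $Q^w$ is $3$-dimensional, then $Q$ and $Q^w$ have the \emph{same} index. Applied to a merged spanning $P$, this forces at least one of $P^u$, $P^v$ to be spanning---for if both were non-spanning, each would have the index of $P^{uv}$, hence $u,v\in\gen{P^{uv}\cap\Z^3}$ and $P$ itself would be non-spanning. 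That is the missing idea; your ``use Theorem~\ref{thm:main_exceptions} in reverse'' gestures toward the classification but does not extract this consequence.
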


The structure of the paper is as follows. After a brief introduction and remarks about the sublattice index in Section~\ref{sec:prelim}, Section~\ref{sec:width_one} proves the (easy) classification of non-spanning $3$-polytopes of width one (Lemma~\ref{lemma:width1_intro}), and Section~\ref{sec:families} is devoted to the study of the infinite families of non-spanning $3$-polytopes of Lemma~\ref{lemma:inf_families}.

Section~\ref{sec:main} proves our main result, Theorem~\ref{thm:main_exceptions}. 
This is the most complicated part of the paper, relying substantially in our results from~\cite{quasiminimals}. A sketch of the proof is as follows:
For small polytopes, Theorem~\ref{thm:main_exceptions} follows from comparing Tables~\ref{table:numbers} and~\ref{table:families} (together with the easy observation that the six polytopes described in Theorem~\ref{thm:main_exceptions} are not isomorphic to one another or to the ones in Lemma~\ref{lemma:inf_families}).
For polytopes of larger size we use induction on the size (taking the enumeration of size $\le 11$ as the base case) and we prove the following:

\begin{itemize}

\item In Section~\ref{sec:merging} we look at polytopes that cannot be obtained \emph{merging} two smaller ones, in the sense of~\cite{quasiminimals}. These polytopes admit quite explicit descriptions that allow us to prove that the only non-spanning ones (for sizes $\ge 8$) are those of the form $\widetilde F_i(0,b)$ for $i\in \{1,2\}$ (Theorem~\ref{thm:spiked-index}). Since merging can only decrease the index, this immediately implies that all non-spanning $3$-polytopes of width $>1$ and size $\ge 8$ have index at most three (Corollary~\ref{coro:width>1-index}).

\item In Sections~\ref{sec:index3} and~\ref{sec:index2} we look at \emph{merged} polytopes of indices three and two, respectively, and prove that, with the five exceptions mentioned in Theorem~\ref{thm:main_exceptions}, they all belong to the families of Lemma~\ref{lemma:inf_families}. 
\end{itemize}

Section~\ref{sec:spanning} proves Theorem~\ref{thm:no_uni_tetra}.
\medskip

%%%%%%%%%%%%%%%%%%%%%%%%%%%%%%%%%%%%%%%%%%%%%%%%%%%%%%%%
%%%%%%%%%%%%%%%%%%%%%%%%%%%%%%%%%%%%%%%%%%%%%%%%%%%%%%%%
%%%%%%%%%%%%%%%%%%%%%%%%%%%%%%%%%%%%%%%%%%%%%%%%%%%%%%%%

\section{Sublattice index}
\label{sec:prelim}

By a \emph{lattice point configuration} we mean a finite subset $A\subset \Z^d$ that affinely spans $\R^d$. We denote by $\gen{A}$ the affine lattice generated by $A$ over the integers:

\[
\gen{A}:=\left\{\sum_i \lambda_ia_i \; \vert \; a_i\in A,\lambda_i\in\Z, \sum_{i}\lambda_i=1\right\}
\]

Since $A$ is affinely spanning, $\gen{A}$ has finite index as a sublattice of $\Z^d$.

\begin{definition}
The \emph{sublattice index} of $A$ is the index of $\gen{A}$ in $\Z^d$.
We say that $A$ is \emph{lattice-spanning} if its sublattice index equals $1$. That is, if $\gen{A}=\Z^d$. 
\end{definition}

\begin{remark}
If $A=P\cap \Z^d$ for $P$ a lattice $d$-polytope, we call \emph{sublattice index of $P$} the sublattice index of $A$, and say that $P$ is \emph{lattice-spanning} if $A$ is.
\end{remark}

\begin{lemma}\label{lemma:index} 
 Let $A\subset \Z^d$ be a lattice point configuration. 
\begin{enumerate}
\item The sublattice index of $A$ divides the sublattice index of every subconfiguration $B$ of $A$.
\label{item:subpolytope}
\item Let $\pi:\Z^d\twoheadrightarrow\Z^s$, for $s <d$, be a lattice projection. Then the sublattice index of $\pi(A)$ divides the sublattice index of $A$.
\label{item:projection}
\end{enumerate}
\end{lemma}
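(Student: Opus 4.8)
The plan is to reduce both parts to elementary facts about finite-index linear sublattices, using throughout the trivial observation that if $\phi\colon G\twoheadrightarrow H$ is a surjective homomorphism of finite abelian groups then $|H|$ divides $|G|$ (indeed $|G|=|H|\cdot|\ker\phi|$). The first step is to pass from affine lattices to linear ones. Fix $a_0\in B$ for part~\eqref{item:subpolytope}, respectively $a_0\in A$ for part~\eqref{item:projection}; since $a_0\in\Z^d$, translation by $-a_0$ is a lattice automorphism of $\Z^d$. A direct check shows that $\gen{A}-a_0$ equals the \emph{linear} sublattice $L_A:=\langle a-a_0 : a\in A\rangle_{\Z}$ of $\Z^d$: the constraint $\sum_i\lambda_i=1$ in the definition of $\gen{A}$ forces the coefficient of $a_0$ to be determined by the others, which are then free to range over $\Z$. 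Likewise $\gen{B}-a_0=L_B:=\langle a-a_0 : a\in B\rangle_{\Z}$. Hence the sublattice index of $A$ is $[\Z^d:L_A]$ and that of $B$ is $[\Z^d:L_B]$.

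For part~\eqref{item:subpolytope}: since $B\subseteq A$ we have the tower $L_B\subseteq L_A\subseteq\Z^d$, so $[\Z^d:L_B]=[\Z^d:L_A]\cdot[L_A:L_B]$ (equivalently, there is a natural surjection $\Z^d/L_B\twoheadrightarrow\Z^d/L_A$), and therefore $[\Z^d:L_A]$ divides $[\Z^d:L_B]$, as claimed.

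For part~\eqref{item:projection}: since $\pi$ is linear, $\pi(\gen{A})=\gen{\pi(A)}$, and translating by $\pi(-a_0)=-\pi(a_0)$ gives $\pi(L_A)=L_{\pi(A)}:=\langle \pi(a)-\pi(a_0): a\in A\rangle_{\Z}$, the linear lattice attached to $\gen{\pi(A)}$; thus the sublattice index of $\pi(A)$ is $[\Z^s:\pi(L_A)]$. Now $\pi$ maps $L_A$ onto $\pi(L_A)$, hence induces a well-defined homomorphism $\Z^d/L_A\to\Z^s/\pi(L_A)$, which is surjective because $\pi\colon\Z^d\twoheadrightarrow\Z^s$ is. The observation above then yields that $[\Z^s:\pi(L_A)]$ divides $[\Z^d:L_A]$, i.e.\ the sublattice index of $\pi(A)$ divides that of $A$.

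None of this is hard; the only point that needs care is the affine-to-linear bookkeeping — checking that $\gen{A}-a_0$ is exactly the $\Z$-span of the difference vectors, and that this identification is compatible both with restricting to a subconfiguration and with applying the lattice projection $\pi$. Once that is settled, both divisibilities follow at once from multiplicativity of the index in a tower of lattices.
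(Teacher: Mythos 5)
Your proof is correct and follows essentially the same route as the paper's: part (1) via the inclusion of the generated lattices yielding a surjection $\Z^d/\gen{B}\twoheadrightarrow\Z^d/\gen{A}$ (phrased by you as multiplicativity of the index in the tower $L_B\subseteq L_A\subseteq\Z^d$), and part (2) via the surjection $\Z^d/\gen{A}\twoheadrightarrow\Z^s/\gen{\pi(A)}$ induced by $\pi$. The only difference is that you spell out the affine-to-linear translation (fixing a base point $a_0$), which the paper leaves implicit; that bookkeeping is carried out correctly.
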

\begin{proof}
In part (1), the injective homomorphism $ \gen{B} \to \gen{A}$ induces a surjective homomorphism $\Z^d / \gen{B} \to\Z^d / \gen{A}$. 
In part (2), the lattice projection $\pi$ induces a surjective homomorphism $\Z^d / \gen{A} \to\Z^s / \gen{\pi(A)}$.
\end{proof}

It is very easy to relate the index of a lattice polytope or point configuration to the volumes of (empty) simplices in it. 

\begin{lemma}
\label{lemma:index_gcd}
Let $A$ be a lattice point configuration of dimension $d$. Then, the sublattice index of $A$ equals the gcd of the volumes of all the lattice $d$-simplices with vertices in $A$. 
\end{lemma}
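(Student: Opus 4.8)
The statement relates two quantities attached to a full-dimensional lattice point configuration $A$: its sublattice index $q := [\Z^d : \gen{A}]$, and the gcd $g$ of the normalized volumes $\vol(\conv(v_0,\dots,v_d))$ over all affinely independent $(d+1)$-tuples $v_0,\dots,v_d \in A$. The plan is to prove $q \mid g$ and $g \mid q$ separately.

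For the divisibility $q \mid g$: pick any $d$-simplex $S = \conv(v_0,\dots,v_d)$ with vertices in $A$. Its vertices lie in the affine lattice $\gen{A}$, which has index $q$ in $\Z^d$. Translating so that $v_0$ becomes the origin, the vectors $v_1 - v_0,\dots,v_d - v_0$ lie in the (linear) lattice $\gen{A} - v_0$, whose index in $\Z^d$ is also $q$; hence the absolute value of the determinant of the matrix with these vectors as columns — which is exactly $\vol(S)$ — is a multiple of $q$. Since this holds for every such simplex, $q$ divides each of the volumes and therefore divides their gcd $g$.

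For the reverse divisibility $g \mid q$: I want to show that $g$ lattice vectors suffice to describe the gap between $\gen{A}$ and $\Z^d$, i.e. that $\Z^d / \gen{A}$ has order dividing $g$. Fix a basepoint $a_0 \in A$ and pass to the linear lattice $L := \gen{A} - a_0 = \gen{\,a - a_0 : a \in A\,}$. Choosing from $A$ a maximal affinely independent subset $a_0, a_1,\dots,a_d$, the vectors $e_i := a_i - a_0$ form a basis of a full-rank sublattice $L_0 \subseteq L \subseteq \Z^d$ with $[\Z^d : L_0] = \vol(\conv(a_0,\dots,a_d)) =: V_0$, a multiple of $g$. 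Now every other point $a \in A$ gives a vector $a - a_0 = \sum_i \lambda_i e_i$ with rational coordinates $\lambda_i$, and by Cramer's rule $V_0 \lambda_i$ is (up to sign) the volume of the simplex obtained from $a_0,\dots,a_d$ by replacing $a_i$ with $a$, hence a multiple of $g$. Thus $L$ is generated over $\Z$ by $L_0$ together with vectors whose coordinates in the basis $(e_i)$ are multiples of $g/V_0$ — equivalently, $V_0 L^{\vee} \subseteq$ (integer combinations), which one packages into the statement that $\Z^d / L$ is a quotient of $\Z^d / L_0 \cong \prod \Z/(\text{elementary divisors})$ in which every generator is killed by $g$. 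Concretely: in the basis $(e_i)$ of $L_0$, the lattice $L$ is spanned by $L_0$ and the columns $(V_0\lambda_i^{(a)})_i / V_0$; clearing denominators, $q = [\Z^d:L] = V_0 / [\,L : L_0\,]$, and $[L:L_0]$ is the gcd-type invariant generated by all the $d\times d$ minors formed by $V_0\lambda^{(a)}$-columns together with $e_i$-columns, each of which is a volume in $A$, hence a multiple of $g$ — so $g \mid V_0/[L:L_0] = q$. (Alternatively and more cleanly, invoke Lemma~\ref{lemma:index}(1): the subconfiguration $\{a_0,\dots,a_d\}$ has index $V_0$, so $q \mid V_0$; iterating this over well-chosen $(d+1)$-subsets and using that $\gen{A}$ is generated by these sublattices gives $q = \gcd_{(d+1)\text{-subsets }B}\, (\text{index of }B) = \gcd (\text{volumes})= g$.)

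The cleanest route, which I would actually write, is this: by Lemma~\ref{lemma:index}(1) the index $q$ of $A$ divides the index of each affinely independent $(d+1)$-subset $B = \{v_0,\dots,v_d\}$, and that index is precisely $\vol(\conv B)$ (since $\gen{B}$ is spanned by the columns $v_i - v_0$); hence $q \mid g$. Conversely, $\gen{A}$ is the sum (as affine lattices through a common point $a_0$) of the lattices $\gen{B}$ over all such $B$ containing $a_0$, and the index of a sum of full-rank sublattices divides the gcd of their indices; therefore $q$ divides $\gcd_B \vol(\conv B)$ restricted to $B \ni a_0$, but every affinely independent $(d+1)$-subset can be translated into one containing $a_0$ only after we also observe it suffices to take the gcd over all of them — so $q \mid g$ the other way would be wrong; rather one shows $g \mid q$ by noting each volume is a multiple of $q$ is the easy direction already done, and for $g \mid q$ we use that some single $\vol(\conv B_0) = q \cdot [\,\gen{A} : \gen{B_0}\,]$ and refine $\gen{B_0}$ toward $\gen{A}$ by adjoining one point of $A$ at a time, each adjunction dividing the index by a factor that is itself expressible as a gcd of volume ratios — I will present this refinement argument carefully. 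The main obstacle is exactly this second divisibility: making rigorous that successively adjoining lattice points of $A$ to a reference simplex multiplies the relevant index by controlled factors, each a gcd of $d\times d$ minors that are themselves simplex volumes in $A$; this is a standard Smith-normal-form / Cramer's-rule bookkeeping, but it must be stated without hand-waving.
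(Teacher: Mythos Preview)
Your $q \mid g$ direction is correct and cleanly argued. The $g \mid q$ direction, however, is never actually completed: you cycle through three sketches (a Cramer's-rule computation against a fixed reference simplex, a sum-of-sublattices idea via Lemma~\ref{lemma:index}(1), and a ``refinement'' procedure), and you explicitly defer the last with ``I will present this refinement argument carefully''. The sum-of-sublattices attempt goes the wrong way, as you yourself notice midway through: it only re-proves $q \mid \gcd$. The Cramer's-rule paragraph contains a correct germ but the sentence ``$[L:L_0]$ is the gcd-type invariant generated by all the $d\times d$ minors\dots'' is not a well-defined claim; $[L:L_0]$ is a single integer, and you never say what identity relates it to the minors.

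The paper's proof sidesteps all of this with one standard fact. Translate so that $0\in A$ and form the $d\times |A|$ integer matrix $M$ whose columns are the points of $A$. Then $\gen{A}$ (as a linear lattice) is the column lattice of $M$, and the Smith-normal-form identity says that $[\Z^d:\text{col}(M)]$ equals the gcd of all $d\times d$ minors of $M$. Each such minor is, up to sign, the normalized volume of the simplex on $0$ and the $d$ chosen columns, which is a simplex with vertices in $A$. That single line gives both divisibilities at once; there is no need to split into $q\mid g$ and $g\mid q$.

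You were within reach of this --- you even name Smith normal form in your last sentence --- but you never isolate and invoke the statement ``index of the column lattice $=$ gcd of the maximal minors'', which is the entire content of the lemma once the origin is placed in $A$. (A small residual point, glossed over in the paper too: the minors only give simplices through $0$, but since $g$ divides each such volume while $q$ divides every volume, the gcd over simplices through $0$ and the gcd over all simplices coincide.)
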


\begin{remark}
Observe that the $\gcd$ of volumes of \emph{all} simplices in $A$ equals the $\gcd$ of volumes of simplices \emph{empty in $A$}, by which we mean simplices $T$ such that $T\cap A= \wert(T)$. Indeed, if $T$ is a non-empty simplex, then $T$ can be triangulated into empty simplices $T_1,\dots,T_k$. Since $\vol(T)= \sum \vol(T_i)$, we have that $\gcd(\vol(T_1),\dots, \vol(T_n) )=\gcd(\vol(T_1),\dots, \vol(T_n) , \vol(T))$.
\end{remark}

\begin{proof}
Without loss of generality assume that the origin is in $A$. Then, the sublattice index of $A$ equals the gcd of all maximal minors of the $d\times |A|$ matrix having the points of $A$ as columns. These minors are the (normalized) volumes of lattice $d$-simplices with vertices in $A$.
\end{proof}

%%%%%%%%%%%%%%%%%%%%%%%%%%%%%%%%%%%%%%%%%%%%%%%%%%%%%%%%
%%%%%%%%%%%%%%%%%%%%%%%%%%%%%%%%%%%%%%%%%%%%%%%%%%%%%%%%
%%%%%%%%%%%%%%%%%%%%%%%%%%%%%%%%%%%%%%%%%%%%%%%%%%%%%%%%

\section{Non-spanning $3$-polytopes of width one}
\label{sec:width_one}

\begin{lemma}
\label{lemma:width1-index}
Let $P$ be a lattice $3$-polytope of width one. Then $P$ either contains a unimodular tetrahedron or it equals the convex hull of two lattice segments lying in consecutive parallel lattice planes.
\end{lemma}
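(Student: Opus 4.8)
The plan is to use the width-one hypothesis to sandwich $P$ between two consecutive lattice planes and then analyze the two slices. Concretely, after a unimodular transformation I would assume that $P\subseteq\{0\le z\le 1\}$ and that $P$ meets both planes $z=0$ and $z=1$ (if it only met one of them, $P$ would not be full-dimensional). Write $P_0:=P\cap\{z=0\}$ and $P_1:=P\cap\{z=1\}$; these are lattice polygons (possibly of dimension $1$ or $0$) in the respective planes, and $P=\conv(P_0\cup P_1)$. The key dichotomy is whether one of $P_0$, $P_1$ has dimension $2$ or both have dimension $\le 1$.

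First I would treat the case where one of the slices, say $P_0$, is two-dimensional. Then $P_0$ is a lattice polygon in $\Z^2$, so it contains a unimodular triangle $T=\conv\{a,b,c\}$ with $a,b,c$ at height $0$. Since $P_1\ne\emptyset$, pick any lattice point $d\in P_1$ (e.g.\ a vertex of $P_1$, which lies in $\Z^3$). The tetrahedron $\conv\{a,b,c,d\}$ has volume equal to $\vol(T)\cdot 1=1$, since the height of $d$ over the plane $z=0$ is exactly $1$; hence it is a unimodular tetrahedron contained in $P$. By symmetry the same conclusion holds if $P_1$ is two-dimensional. So in this branch $P$ contains a unimodular tetrahedron, which is the first alternative of the statement.

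It remains to handle the case where both $P_0$ and $P_1$ have dimension $\le 1$. Since $P=\conv(P_0\cup P_1)$ is three-dimensional, neither slice can be a single point (the convex hull of a segment and a point is at most two-dimensional), so both $P_0$ and $P_1$ are genuine lattice segments. Then $P=\conv(P_0\cup P_1)$ is literally the convex hull of two lattice segments lying in the consecutive parallel lattice planes $z=0$ and $z=1$, which is exactly the second alternative. This finishes the case analysis and hence the proof.

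The only mildly delicate point is the very first normalization: one must check that a width-one lattice $3$-polytope can indeed be placed between two \emph{consecutive} lattice planes with lattice points on both, and that the ``slices'' $P_i$ have their vertices in $\Z^3$ — the latter because a vertex of $P_i=P\cap\{z=i\}$ is either a vertex of $P$ or lies on an edge of $P$ joining a height-$0$ and a height-$1$ lattice vertex, and such an edge is primitive so its only lattice points are its endpoints. Everything else is immediate from $\vol$ being multiplicative in the ``base $\times$ height'' sense for the distinguished direction $z$. I do not expect a substantive obstacle; the argument is essentially the observation that a lattice polygon always contains a unimodular triangle together with the trivial volume computation for a pyramid of height one.
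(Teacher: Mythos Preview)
Your proof is correct and follows essentially the same approach as the paper: split the lattice points of $P$ between the two consecutive planes, observe that three non-collinear lattice points in one plane yield a unimodular triangle whose cone over any lattice point in the other plane is a unimodular tetrahedron, and otherwise conclude that each slice is (at most) a segment. Your version is slightly more explicit about the normalization and about why the slices $P_i$ are themselves lattice polytopes, but the argument is the same.
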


\begin{proof}
Since $P$ has width one, its lattice points are distributed in two consecutive parallel lattice planes. If $P$ has three non collinear lattice points in one of these planes we can assume without loss of generality that they form a unimodular triangle. Then, these three points together with any point in the other plane (there exists at least one) form a unimodular tetrahedron.

If $P$ does not have three non-collinear points in one of the two planes then all the points in each of the planes are contained in a lattice segment.
\end{proof}

\begin{proposition}
\label{pro:segments_case}
The convex hull of two lattice segments lying in consecutive parallel lattice planes is equivalent to 
\[
T_{p,q}(a,b):=\conv\{(0,0,0),(a,0,0),(0,0,1),(bp,bq,1)\}
\]
for some $0\le p < q$ with $\gcd(p,q)=1$, and $a,b\ge 1$.

The sublattice index of $T_{p,q}(a,b)$ is $q$ and its size is $a+b+2$. 
\end{proposition}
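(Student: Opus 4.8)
The plan is to normalize the two parallel lattice planes, then normalize each of the two segments, and finally keep track of the one remaining degree of freedom. First I would apply a unimodular transformation so that the two consecutive parallel lattice planes become $\{z=0\}$ and $\{z=1\}$, and so that one endpoint of the segment in $\{z=0\}$ lands at the origin. Since that segment is a lattice segment in the plane $\{z=0\}$, after a further unimodular transformation fixing the $z$-coordinate (i.e. an element of $\mathrm{GL}_2(\Z)$ acting on the first two coordinates) I can assume the segment is $\conv\{(0,0,0),(a,0,0)\}$ for some $a\ge 1$; here $a$ is intrinsic, it is the normalized length of that segment.

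Next I would handle the second segment, lying in $\{z=1\}$. Using a shear of the form $(x,y,z)\mapsto(x+\alpha z, y+\beta z, z)$ (which is unimodular and fixes the plane $\{z=0\}$ pointwise, hence preserves the first segment), I can move one endpoint of the second segment to $(0,0,1)$. The other endpoint is then some lattice point $(u,v,1)$; write $\gcd(u,v)=b\ge 1$ and $(u,v)=(bp',bq')$ with $\gcd(p',q')=1$. The shears already used are exhausted in placing the endpoint at $(0,0,1)$, but I still have at my disposal the transformations of the first two coordinates that fix the first segment, namely maps of the form $(x,y)\mapsto(x+ky,\pm y)$ for $k\in\Z$ (these are exactly the elements of $\mathrm{GL}_2(\Z)$ fixing the horizontal segment $\conv\{(0,0),(a,0)\}$ setwise — possibly also $x\mapsto a-x$, which I can include). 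Applying $(x,y)\mapsto(x+ky,y)$ replaces $p'$ by $p'+kq'$, so I can reduce $p'$ modulo $q'$ to land in $\{0,\dots,q'-1\}$, and the sign change $y\mapsto -y$ lets me assume $q'=|q|\ge 0$; together with $x\mapsto a-x$ composed appropriately, one reaches $0\le p<q$ with $\gcd(p,q)=1$. This yields $P\cong T_{p,q}(a,b)$ with the stated constraints.

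For the second part, the size is immediate: the lattice points of $T_{p,q}(a,b)$ are exactly the $a+1$ points on the first segment and the $b+1$ points on the second segment (the two segments are the intersections with the only two lattice planes meeting $P$, and each contains exactly its collinear lattice points), giving $(a+1)+(b+1)=a+b+2$. For the index, by Lemma~\ref{lemma:index_gcd} it equals the gcd of the volumes of all lattice tetrahedra with vertices in $P\cap\Z^3$. Such a tetrahedron must have two vertices on each segment (three on one line would be degenerate), so its vertices are $(i,0,0),(j,0,0),(0,0,1),(bp,bq,1)$ for $0\le i<j\le a$; the determinant computes to $(j-i)\,bq$, and since consecutive choices give $(j-i)=1$, the gcd over all of them is $bq\cdot\gcd\{j-i\}=bq$. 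Wait — this gives $bq$, not $q$; the resolution is that the tetrahedra need not have all four of those as vertices in the extremal positions. Let me redo: taking $i,j$ with $j-i=1$ gives volume $bq$, but we may also take the two vertices on the $z=1$ line to be $(0,0,1)$ and $(p,q,1)$ (a lattice point of $P$ when $b\ge 1$), giving volume $(j-i)q$; with $j-i=1$ this is $q$. Hence the gcd of all these volumes divides $q$, and a direct check shows every such determinant is a multiple of $q$, so the index is exactly $q$.

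The main obstacle is bookkeeping the group of "remaining" unimodular transformations at each stage — after fixing the planes and the first segment, one must correctly identify the stabilizer subgroup and verify that its action on the data $(p',q')$ of the second endpoint has $\{(p,q):0\le p<q,\ \gcd(p,q)=1\}$ as a transversal; and one must double-check that no two distinct $T_{p,q}(a,b)$ in this normal form are isomorphic (or, if they can be, acknowledge it — but the proposition only claims existence of such a representative, so uniqueness is not strictly needed). The index computation also requires care to select the right small empty tetrahedron realizing volume exactly $q$ rather than $bq$.
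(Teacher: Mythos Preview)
Your normalization argument is essentially the paper's: fix the two planes to $\{z=0\}$ and $\{z=1\}$, put the first segment along the $x$-axis, translate one endpoint of the second segment to $(0,0,1)$, and then use the residual shears $(x,y,z)\mapsto(x+ky,y,z)$ together with a sign change on $y$ to reduce to $0\le p<q$. Your bookkeeping of the stabilizer is a bit more explicit than the paper's, but the moves are the same, and (as you note) uniqueness of the normal form is not being claimed.

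The one place where the paper is cleaner is the index computation. Instead of running through Lemma~\ref{lemma:index_gcd} and enumerating tetrahedra (where you briefly stumble on $bq$ before locating the right empty tetrahedron), the paper simply observes that every lattice point of $T_{p,q}(a,b)$ has $y\equiv 0\pmod q$, so $\gen{T_{p,q}(a,b)\cap\Z^3}$ lies in an index-$q$ sublattice; equality then follows from the single tetrahedron $\conv\{(0,0,0),(1,0,0),(0,0,1),(p,q,1)\}$ of volume $q$. This avoids having to argue that \emph{all} tetrahedron volumes are multiples of $q$, which in your write-up is left as ``a direct check shows''. Both routes work; the sublattice observation is just more direct.
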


See Figure~\ref{fig:width1_nonspanning} for an illustration of $T_{p,q}(a,b)$.

\begin{proof} 
Let $P$ be the convex hull of two lattice segments lying in consecutive parallel lattice planes.
Without loss of generality we assume that the segments are contained in the planes $\{z=0\}$ and $\{z=1\}$, respectively.
Let $a$ and $b$ be the lattice length of the two segments, so that the size of $P$ is indeed $a+b+2$. 

By a unimodular transformation there is no loss of generality in assuming the first segment to be $\conv\{(0,0,0),(a,0,0)\}$ and the second one to contain $(0,0,1)$ as one of its endpoints. Then the second endpoint is automatically of the form $(bp,bq,1)$ for some coprime $p,q\in \Z$ and with $q\ne 0$ in order for $P$ to be full-dimensional. We can assume $q>0$ since the case $q<0$ is symmetric.
Also, since the unimodular transformation $(x,y,z)\mapsto (x\pm y,y,z)$ fixes $(0,0,0)$, $(a,0,0)$ and $(0,0,1)$ and sends $(p,q,1)\mapsto (p\pm q,q,1)$, there is no loss of generality in assuming $0\le p <q$.

Since all lattice points of $P$ lie in the lattice $y\equiv0\pmod q$ its index is a multiple of $q$. Since $P$ contains the tetrahedron $\conv\{(0,0,0)(1,0,0), (0,0,1),(p, q,1)\}$, of determinant $q$, its index is exactly $q$.
\end{proof}

\begin{corollary}
\label{cor:width1-iff}
Let $P$ be a lattice $3$-polytope of width one. Then
\begin{itemize}
\item $P$ is lattice-spanning if, and only if, it contains a unimodular tetrahedron.
\item $P$ has index $q>1$ if, and only if, $P \cong T_{p,q}(a,b)$ for some $0\le p < q$ with $\gcd(p,q)=1$ and $a,b\ge 1$.
\end{itemize}
\qed
\end{corollary}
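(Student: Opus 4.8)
The plan is to obtain Corollary~\ref{cor:width1-iff} as an almost immediate combination of Lemma~\ref{lemma:width1-index} and Proposition~\ref{pro:segments_case}, using in addition the elementary fact that a lattice polytope containing a unimodular tetrahedron is lattice-spanning. To justify that fact: if $T\subseteq P$ is a unimodular tetrahedron then $\vol(T)=1$, so by Lemma~\ref{lemma:index_gcd} the sublattice index of the configuration $\wert(T)$ equals $1$; since $\wert(T)\subseteq P\cap\Z^3$, Lemma~\ref{lemma:index}(1) shows that the index of $P$ divides $1$, hence $P$ is lattice-spanning. Conversely, an index-$q$ polytope with $q>1$ cannot contain a unimodular tetrahedron, by the same two lemmas.

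For the first claim, the implication ``$P$ contains a unimodular tetrahedron $\Rightarrow$ $P$ is lattice-spanning'' is exactly the observation above. For the converse I would argue: assume $P$ lattice-spanning; Lemma~\ref{lemma:width1-index} gives two cases. If $P$ already contains a unimodular tetrahedron we are done. Otherwise $P$ is the convex hull of two lattice segments in consecutive parallel lattice planes, so by Proposition~\ref{pro:segments_case} $P\cong T_{p,q}(a,b)$ with index $q$; lattice-spanning forces $q=1$, and then the tetrahedron $\conv\{(0,0,0),(1,0,0),(0,0,1),(p,q,1)\}$ used in the proof of that proposition has determinant $q=1$ and lies in $P$ (here $a,b\ge 1$ guarantee it fits), so it is a unimodular tetrahedron in $P$.

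For the second claim, one direction is immediate from Proposition~\ref{pro:segments_case}: if $P\cong T_{p,q}(a,b)$ with $0\le p<q$, $\gcd(p,q)=1$, $a,b\ge 1$ and $q>1$, then $P$ has index $q>1$. For the other direction, suppose $P$ has index $q>1$. Then $P$ contains no unimodular tetrahedron (by the remark above), so Lemma~\ref{lemma:width1-index} puts $P$ in the ``two segments'' case, and Proposition~\ref{pro:segments_case} gives $P\cong T_{p,q'}(a,b)$ with $0\le p<q'$, $\gcd(p,q')=1$, $a,b\ge 1$ and index $q'$; comparing indices yields $q'=q$.

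The hard part is essentially nonexistent: this is a bookkeeping corollary. The only two points needing attention are (i) spelling out why ``contains a unimodular tetrahedron'' is equivalent to ``index $1$'' (done via Lemmas~\ref{lemma:index} and~\ref{lemma:index_gcd}), and (ii) checking that in the index-$1$ subcase of the ``two segments'' alternative a unimodular tetrahedron genuinely sits inside $P$, which is handled by instantiating at $q=1$ the determinant-$q$ tetrahedron already exhibited in the proof of Proposition~\ref{pro:segments_case}.
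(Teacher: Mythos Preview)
Your proposal is correct and follows exactly the approach the paper intends: the paper gives no proof at all for this corollary (it is marked with a bare \qed), treating it as an immediate consequence of Lemma~\ref{lemma:width1-index} and Proposition~\ref{pro:segments_case}. Your write-up simply spells out the routine details the paper leaves implicit, including the justification via Lemmas~\ref{lemma:index} and~\ref{lemma:index_gcd} that containing a unimodular tetrahedron forces index~$1$.
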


%%%%%%%%%%%%%%%%%%%%%%%%%%%%%%%%%%%%%%%%%%%%%%%%%%%%%%%%
%%%%%%%%%%%%%%%%%%%%%%%%%%%%%%%%%%%%%%%%%%%%%%%%%%%%%%%%
%%%%%%%%%%%%%%%%%%%%%%%%%%%%%%%%%%%%%%%%%%%%%%%%%%%%%%%%

\section{Four infinite non-spanning families}
\label{sec:families}

In this section we study the infinite families of non-spanning $3$-polytopes introduced in Lemma~\ref{lemma:inf_families}, whose definition we now recall:
\begin{align*}
\widetilde F_1(a,b)&:=\ \conv\big( S_{a,b}\cup \{(-1,-1,0), (2,-1,1), (-1,2,-1)\}\big),\\
\widetilde F_2(a,b)&:=\ \conv\big( S_{a,b}\cup \{(-1,-1,1), (1,-1,0), (-1,1,0)\}\big),\\
\widetilde F_3(a,b,k)&:=\ \conv\big( S_{a,b}\cup \{(-1,-1,1), (1,-1,0), (-1,1,0),(1,1,2k-1)\}\big),\\
\widetilde F_4(a,b)&:=\ \conv\big( S_{a,b}\cup \{(-1,-1,1), (1,-1,0), (-1,1,0),(3,-1,-1)\}\big).
\end{align*}
In all cases, $S_{a,b}:=\conv\{(0,0,a),(0,0,b)\}$ with $a,b\in\Z$ and $a\le 0 < b$, and in the third family, $k\in \{0,\dots, b\}$.

The statement of Lemma~\ref{lemma:inf_families} is that all these polytopes are non-spanning and have width $>1$, with the exceptions of $\widetilde F_2(0,1)$ and $\widetilde F_3(0,1,1)$, which clearly have width one with respect to the third coordinate.
For the proof, recall that we call the lattice segment $S_{a,b}$ the \emph{spike}, and that apart of the $b-a+1$ lattice points in the spike these polytopes only have three (in the families $\widetilde F_1$ and $\widetilde F_2$) or four (in the families $\widetilde F_3$ or $\widetilde F_4$) other lattice points.

\begin{proof}[Proof of Lemma~\ref{lemma:inf_families}]
The lattice points in $\widetilde F_1(a,b)$ generate the sublattice $\{(x,y,z) \in \Z^3 : x-y \equiv 0 \pmod 3\}$, of index $3$. Those in $\widetilde F_2(a,b)$, $\widetilde F_3(a,b,k)$ and $\widetilde F_4(a,b)$ generate the sublattice $\{(x,y,z) \in \Z^3 : x+y \equiv 0 \pmod 2\}$, of index $2$.

For the width we consider two cases: Functionals constant on $S_{a,b}$ (that is, not depending on the third coordinate) project to functionals in $\Z^2$ on one of the configurations $F_i$ of Figure~\ref{fig:projs_families}, of which $F_1$ has width $3$ and the others width $2$.

Functionals that are non-constant along $S_{a,b}$ produce width at least $b-a$, so the only possibility for width one would be $a=0$ and $b=1$. It is clear that $\widetilde F_2(0,1)$ and $\widetilde F_3(0,1,1)$ have width one with respect to the functional $z$ but it is easy to check (and left to the reader) that the rest have width at least two even when $(a,b)=(0,1)$. 
\end{proof}

The list in Lemma~\ref{lemma:inf_families} contains some redundancy, but not much.

\begin{proposition}
\label{prop:redundancy}
The only isomorphic polytopes among the list of Lemma~\ref{lemma:inf_families} are:
\begin{enumerate}

\item For $i\in\{1,2,4\}$ we have that $\widetilde F_i(a,b)$ is isomorphic to $\widetilde F_i(-b,-a)$.
\item $\widetilde F_3(a,b,k)$ is isomorphic to $\widetilde F_3(k-b,k-a,k)$. 
\item In size six:
\[
\widetilde F_2(0,2)\cong\widetilde F_4(0,1) \quad\text{ and }\quad
\widetilde F_2(-1,1)\cong\widetilde F_3(0,1,0).
\] 
\item In size seven:
\[
\widetilde F_3(0,2,1)\cong\widetilde F_4(-1,1).
\]
\end{enumerate}
\end{proposition}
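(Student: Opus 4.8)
# Proof proposal for Proposition~\ref{prop:redundancy}

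The plan is to split the work into two parts: first establish the claimed isomorphisms by exhibiting explicit unimodular maps, and then prove that no further coincidences occur by a parameter-and-invariant argument.

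\textbf{Establishing the listed isomorphisms.} For item (1), I would look for a unimodular involution of $\Z^3$ that fixes the spike line $\{x=y=0\}$ setwise while reversing its orientation (sending $(0,0,z)\mapsto(0,0,-z)$), and that permutes the three or four ``outside'' points of $F_i$ appropriately. For $\widetilde F_1$ the natural candidate is a map combining $z\mapsto -z$ with a suitable action on the $(x,y)$-plane that fixes the triangle $F_1=\{(-1,-1),(2,-1),(-1,2)\}$ as a set; for $\widetilde F_2$ and $\widetilde F_4$ one does the same with $F_2$ resp.\ $F_4$. One must then check that under such a map the point with third coordinate lying ``above'' a spike endpoint at height $a$ lands above height $-a$, which is exactly what turns $S_{a,b}$ into $S_{-b,-a}$; since the outside points carry fixed heights in the definitions, this requires matching those heights, and here the specific constants ($0$, $1$, $-1$, etc.) in the definitions of $\widetilde F_i$ are chosen precisely so this works. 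Item (2) is the same idea, now with the extra point $(1,1,2k-1)$: the shift $z\mapsto 2k-1-z$ is the orientation-reversing map of the spike line that fixes the height $2k-1$ of the fourth point, and it sends $S_{a,b}$ to $S_{2k-1-b,2k-1-a}$; one checks $2k-1-b = k-b$ only if... — here I would be careful: more likely the correct map is $z \mapsto k - z$ composed with something, and I would verify directly on the generators that $\widetilde F_3(a,b,k)\cong\widetilde F_3(k-b,k-a,k)$ by writing the unimodular matrix explicitly and checking it carries the five generating lattice points of one configuration onto those of the other. Items (3) and (4) are finite checks: for each claimed pair I would write down the two point configurations (they have $6$ or $7$ points) and produce an explicit unimodular transformation, or invoke the already-completed classification of size $\le 11$ polytopes from \cite{5points,6points,quasiminimals} to confirm the match.

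\textbf{Proving there are no other coincidences.} The structure here is that every $\widetilde F_i(a,b,\_)$ has a distinguished \emph{spike}: a collinear set of $b-a+1\ge 2$ lattice points. I would first argue that the spike is essentially canonical. Within a given family the spike is the unique maximal set of collinear lattice points as soon as $b-a\ge 2$ (i.e.\ the spike has $\ge 3$ points), because the ``outside'' configurations $F_i$ contain no three collinear points; the only ambiguous cases are the small sizes where the spike has exactly $2$ points, and those are precisely sizes $\le 7$, already handled by items (3)–(4) and by direct enumeration. So for size $\ge 8$ any isomorphism between two polytopes in the families must carry spike to spike, hence induces an isomorphism of the quotient configurations, i.e.\ of $\conv(F_i)\cap\Z^2$ with its marked points $F_i$. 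Since $F_1$ has sublattice index $3$ and $F_2=F_3=F_4$ (as unmarked point sets in $\Z^2$, up to the marked fourth point) has index $2$, the index invariant immediately separates family $\widetilde F_1$ from the rest, and $\widetilde F_1$ only ever meets the others in size $\le 6$ (which is again covered by (3)). Among the index-$2$ families, $\widetilde F_2$ is distinguished by having only $3$ outside points versus $4$ for $\widetilde F_3,\widetilde F_4$. Finally $\widetilde F_3$ and $\widetilde F_4$ are separated by the position of the fourth point relative to the triangle $\conv(F_2)$: in $F_3$ the extra point sits so that the four points form a parallelogram-like configuration (a quadrilateral), whereas in $F_4$ the fourth point is collinear-adjacent in a different way — concretely, one computes the $2$D configuration $F_3$ versus $F_4$ and observes they are not affinely unimodularly equivalent except in the one size-$7$ case of item (4), which is where $k=1$, $b-a$ is small enough that $\widetilde F_3(0,2,1)$ degenerates to match $\widetilde F_4(-1,1)$.

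\textbf{Main obstacle.} The genuinely delicate point is ruling out coincidences \emph{within} a single family under the action $(a,b)\mapsto(a',b')$, and verifying that the given involutions (1)–(2) are the \emph{only} ones. Two polytopes $\widetilde F_i(a,b)$ and $\widetilde F_i(a',b')$ of the same size satisfy $b-a=b'-a'$, so $(a',b')=(a+t,b+t)$ for some $t$; an isomorphism between them must send the spike to the spike preserving collinearity, hence act on the spike line either as a translation by $-t$ (giving $a'=a+t$, possible only if the outside points can be simultaneously shifted — which forces $t=0$ since the outside heights are fixed constants) or as the orientation-reversing reflection, which gives exactly the map in (1)–(2). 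Making the phrase ``simultaneously shifted'' rigorous — i.e.\ showing a unimodular self-map of $\Z^3$ fixing the unmarked outside configuration $F_i$ and translating the spike direction must be trivial — is the crux, and I expect it reduces to: the outside points of $\widetilde F_i$ have heights (third coordinates) that are \emph{not} all equal and not translatable while staying lattice points with the required $x,y$-projections, because the stabilizer of $F_i$ in the affine unimodular group of $\Z^2$ is finite and its lift to $\Z^3$ fixing those specific heights is finite. I would carry this out by listing the (finite) affine unimodular stabilizer of each $F_i$ in $\Z^2$ and, for each element, checking whether it lifts to a $\Z^3$-isomorphism of the family, which yields precisely the maps in (1)–(2) and nothing more.
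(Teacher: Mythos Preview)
Your overall strategy matches the paper's: establish the isomorphisms explicitly, then rule out further coincidences by first showing the spike is canonical, then distinguishing families, then handling within-family redundancy, with small sizes treated by direct enumeration. However, there is a concrete error and a missed simplification.

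\textbf{The error.} You assert that ``the `outside' configurations $F_i$ contain no three collinear points'', and use this to conclude the spike is the unique maximal collinear set as soon as $b-a\ge 2$. This is false for $\widetilde F_4$: the points $(-1,-1,1)$, $(1,-1,0)$, $(3,-1,-1)$ are collinear in $\Z^3$ (consecutive steps of $(2,0,-1)$). So when $b-a=2$ the spike is \emph{not} the unique triple of collinear lattice points in $\widetilde F_4(a,b)$, and your canonicity argument breaks down. The paper avoids this by taking the threshold $b-a\ge 3$ (spike has $\ge 4$ points, and no family has four collinear points off the spike), and then treating the finitely many cases with $b-a\le 2$ by direct enumeration --- this is exactly where the sporadic coincidences (3) and (4) arise. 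Incidentally, this same collinearity is what the paper uses to separate $\widetilde F_4$ from $\widetilde F_3$ (``those in $\widetilde F_4$ are the only ones with three collinear lattice points outside the spike''), which is cleaner than comparing the 2D configurations $F_3$ and $F_4$.

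\textbf{The missed simplification.} Your ``main obstacle'' --- ruling out coincidences within a single family --- is handled in the paper not by computing stabilizers of $F_i$ and lifting, but by a single geometric invariant: for $i\in\{1,2,4\}$ the plane through the three (resp.\ the three non-collinear) outside points meets the spike at $(0,0,0)$, so the unordered pair $\{-a,b\}$ of distances to the spike endpoints is an invariant; for $i=3$ the midpoints of the two opposite pairs of outside points are $(0,0,0)$ and $(0,0,k)$, making $\{-a,b-k\}$ invariant. This immediately pins down the parameters up to the involutions in (1)--(2), with no group-theoretic computation needed.
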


\begin{proof}
The following maps show the isomorphisms in parts (1) and (2) among polytopes within each family:
\begin{align*}
(x,y,z) \mapsto (y,x,-z) \quad& \Rightarrow \quad \widetilde F_1(a,b)\cong \widetilde F_1 (-b,-a).\\
(x,y,z) \mapsto (x,y,-x-y-z) \quad& \Rightarrow \quad \widetilde F_2(a,b)\cong \widetilde F_2 (-b,-a).\\
(x,y,z) \mapsto (y,-x, k+y(k-1)-z ) \quad& \Rightarrow \quad \widetilde F_3(a,b,k)\cong \widetilde F_3 (k-b,k-a,k).\\
(x,y,z) \mapsto (x,y,-x-y-z) \quad& \Rightarrow \quad \widetilde F_4(a,b)\cong \widetilde F_4 (-b,-a).
\end{align*}
Whenever $b-a\ge 3$ these are the only isomorphisms, by the following arguments: 
\begin{itemize}
\item The spike contains $b-a+1\ge 4$ collinear lattice points and is the only collinearity of at least four lattice points. Thus, every isomorphism sends the spike to the spike and, in particular, $b-a$ is an invariant (modulo unimodular equivalence).
\item Polytopes in different families $\widetilde F_i$ cannot be isomorphic: the polytopes in the family $\widetilde F_1$ are the only ones of index three; those in $\widetilde F_2$ are the only ones of index two with only three points outside the spike; and those in $\widetilde F_4$ are the only ones with three collinear lattice points outside the spike.
\item For $i=1,2,4$ the pair $\{-a,b\}$ is an invariant since the plane spanned by lattice points not in the spike intersects the spike at the point $(0,0,0)$, which is at distances $-a$ and $b$ of the end-points of the spike.
\item For $i=3$ the pair $\{-a,b-k\}$ is an invariant since the midpoints of the pairs of opposite lattice points around the spike have as midpoints $(0,0,0)$ and $(0,0,k)$, which are on the spike and at distances $-a$ and $b-k$ from the end-points.
\end{itemize}

It only remains to see that, when $b-a \le 2$, the only isomorphisms that appear are those of parts (3) and (4) of the statement. 
By parts (1) and (2) we can assume that $-a \le b$ in all cases and that $-a \le b-k$ in the family $\widetilde F_3$. That is, $(a,b)\in \{(0,1),(0,2),(-1,1)\}$ in $\widetilde F_1$, $\widetilde F_2$ and $\widetilde F_4$, and $(a,b,k)\in\{(0,1,0),(0,2,0),(0,2,1),(0,2,2),(-1,1,0)\}$ for $\widetilde F_3$ (remember that $\widetilde F_2(0,1)$ and $\widetilde F_3(0,1,1)$ are excluded).
Let us separate the cases by index and size:

\begin{itemize}

\item In the family $\widetilde F_1$, the only one of index $3$, the three possibilities are distinguished by the fact that $\widetilde F_1(0,1)$ is has size five, $\widetilde F_1(0,2)$ is a tetrahedron of size six, and $\widetilde F_1(-1,1)$ is a triangular bipyramid of size six.

\item For index $2$ and size six there are only $\widetilde F_2(-1,1)$, $\widetilde F_3(0,1,0)$, $\widetilde F_2(0,2)$ and $\widetilde F_4(0,1)$. 
The first two are square pyramids, and isomorphic via $(x,y,z) \mapsto (x+z,y+z,-x-y-z)$. The last two are tetrahedra, isomorphic via $(x,y,z) \mapsto (-x+y-z+1,z-1,-y)$. 
 
\item For index $2$ and size seven there are four possibilities in the family $\widetilde F_3$ and two in the family $\widetilde F_4$. All of them happen to have three collinear triples of lattice points.
Looking at the distribution of the seven lattice points in collinear triples is enough to distinguish among five of the six possibilities, as the following diagram shows. (Each diagram shows what lattice points form collinear triples but also the relative order of points along each triple):
\smallskip

\centerline{\includegraphics[scale=.54]{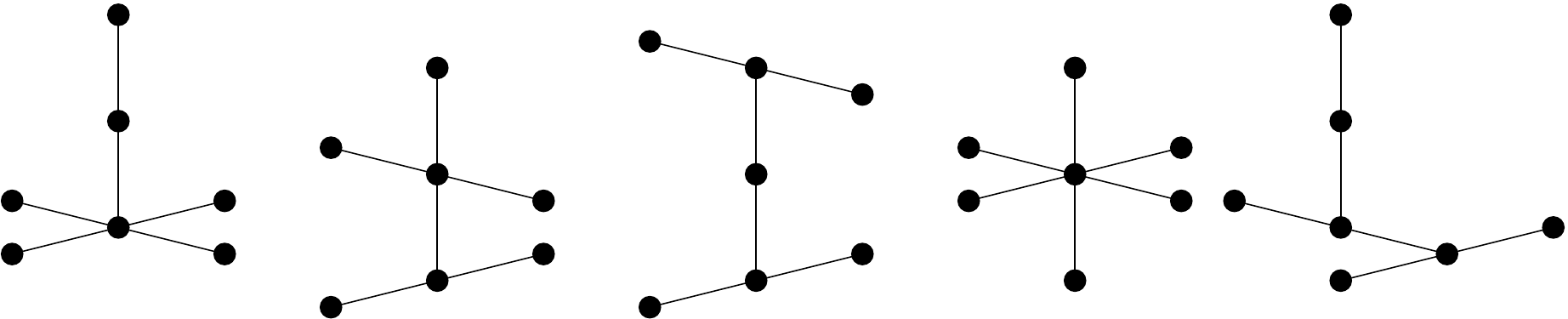}}
\centerline{
$\widetilde F_3(0,2,0)$, \quad
$\widetilde F_3(0,2,1)$, \quad
$\widetilde F_3(0,2,2)$, \quad
$\widetilde F_3(-1,1,0)$, \quad
$\widetilde F_4(0,2)$. \quad
}
\medskip
The unimodular transformation $(x,y,z) \mapsto (-x+y-z+1,z-1,x)$ maps $\widetilde F_3(0,2,1)$ to the remaining polytope of size seven, $\widetilde F_4(-1,1)$.

\end{itemize}
\end{proof}

\begin{corollary}
\label{coro:numbers}
The number of isomorphism classes of polytopes in the families $\widetilde F_i$ are as given in Table~\ref{table:families}.

\begin{table}[htb]\rm
\begin{center}
\begin{tabular}{r|rrrrrrr|c}
\multicolumn{1}{r|}{\bf size}      &  {\bf 5}   &  {\bf  6}  &    {\bf 7}  &   {\bf 8}  &   {\bf 9}  &{\bf 10}  &{\bf 11}   &   \textrm{\bf general $n$}\\
\hline
family $\widetilde F_1$ & 1   &  2   &  2  & 3  &   3  &   4   &   4    & $\lceil \frac{n-3}{2}\rceil$  \rule[-1.5ex]{0pt}{4.5ex} \\
\hline
{\bf total index 3}  &\bf1 & \bf2 &\bf2 &\bf3 &\bf3  &\bf4& \bf4  & $\lceil \frac{n-3}{2}\rceil$ \rule[-1.5ex]{0pt}{4ex} \\
\hline
\hline
family $\widetilde F_2$ & $0^*$   &  2   &  2  & 3  &   3  &   4   &   4   &   $\lceil \frac{n-3}{2}\rceil$  \rule{0pt}{3ex}\\
family $\widetilde F_3$ &    &   $1^*$ &    4  &    6   &   9   &  12   &   16     &  $\left\lfloor \left(\frac{n-3}{2}\right)^2\right\rfloor$\rule{0pt}{3ex}\\
family $\widetilde F_4$ &   &  1   &  2   &  2  & 3  &  3   &   4   & $\left\lfloor \frac{n-3}{2}\right\rfloor$  \rule[-2ex]{0pt}{5ex}\\
\hline
{\bf total index 2}  & \bf0   &   \bf2$^{**}$  &  \bf7$^{**}$  &  \bf11   & \bf15   &\bf19   & \bf24      &  $\left\lfloor \frac{(n-3)(n+1)}{4}\right\rfloor$\rule[-1ex]{0pt}{4.5ex}\\
\end{tabular}
\end{center}
\medskip
\caption{The number of non-spanning $3$-polytopes in the infinite families of Lemma~\ref{lemma:inf_families}.
The two entries marked with $^{*}$ do not coincide with the formula for general $n$ because in each of them one of the configurations counted by the formula has width one. The entries marked $^{**}$ are less than the sum of the three above them (and, in particular, do not coincide with the general formula) because of the isomorphisms in parts (3) and (4) of Proposition~\ref{prop:redundancy}.
}
\label{table:families}
\end{table}
\end{corollary}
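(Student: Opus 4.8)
The plan is a direct enumeration, with Proposition~\ref{prop:redundancy} as the essential input: since that proposition lists \emph{all} the isomorphisms among the polytopes of Lemma~\ref{lemma:inf_families}, counting isomorphism classes of a given size reduces to counting parameter values modulo those isomorphisms. The first step is to record how the size depends on the parameters. A member of family $\widetilde F_i$ consists of the $b-a+1$ lattice points of the spike $S_{a,b}$ together with three (when $i\in\{1,2\}$) or four (when $i\in\{3,4\}$) further lattice points off the spike line, all pairwise distinct; hence a polytope of size $n$ has $b-a=n-4$ in the families $\widetilde F_1,\widetilde F_2$ and $b-a=n-5$ in $\widetilde F_3,\widetilde F_4$. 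In particular only finitely many parameter choices occur for each size.

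Next I would handle the families $\widetilde F_1,\widetilde F_2,\widetilde F_4$. By Proposition~\ref{prop:redundancy}(1) the only coincidence inside each of these is $\widetilde F_i(a,b)\cong\widetilde F_i(-b,-a)$, and the unordered pair $\{-a,b\}$ (which in particular records the spike length $b-a=(-a)+b$) separates all the others. Therefore the isomorphism classes of size $n$ biject with unordered pairs $\{s,t\}$ of non-negative integers with $s+t=b-a$, of which there are $\lfloor(b-a)/2\rfloor+1$; substituting $b-a$ and simplifying the resulting floor expression gives $\lceil(n-3)/2\rceil$ for $\widetilde F_1$ and $\widetilde F_2$, and $\lfloor(n-3)/2\rfloor$ for $\widetilde F_4$. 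For family $\widetilde F_3$ I would reparametrize by the triple of non-negative integers $(\alpha,k,\gamma):=(-a,\,k,\,b-k)$, which satisfies $\alpha+k+\gamma=b-a$; under this change the isomorphism $\widetilde F_3(a,b,k)\cong\widetilde F_3(k-b,k-a,k)$ of Proposition~\ref{prop:redundancy}(2) becomes the transposition $\alpha\leftrightarrow\gamma$ (and, again, nothing else is identified). Hence the classes of size $n$ correspond to a choice of $k$ together with an unordered pair $\{\alpha,\gamma\}$ with $\alpha+\gamma=(b-a)-k$, so their number is $\sum_{j=0}^{b-a}\bigl(\lfloor j/2\rfloor+1\bigr)$; evaluating this sum ($(m/2+1)^2$ for $m=b-a$ even, $(m+1)(m+3)/4$ for $m$ odd) and rewriting it in terms of $n$ gives $\bigl\lfloor((n-3)/2)^2\bigr\rfloor$.

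It then remains to apply the finitely many corrections. The width-one polytopes $\widetilde F_2(0,1)$ (of size $5$) and $\widetilde F_3(0,1,1)$ (of size $6$) are excluded from the classification, which lowers those two entries by one and produces the entries marked $^*$ in Table~\ref{table:families}. For the index-two total one adds the counts of $\widetilde F_2,\widetilde F_3,\widetilde F_4$ and subtracts the cross-family coincidences of Proposition~\ref{prop:redundancy}(3)--(4), which number exactly two in size $6$, one in size $7$, and none for $n\ge8$ (the entries marked $^{**}$); for $n\ge8$ the general formula $\lfloor(n-3)(n+1)/4\rfloor$ then follows from the identity $\lceil\frac{n-3}{2}\rceil+\lfloor(\frac{n-3}{2})^2\rfloor+\lfloor\frac{n-3}{2}\rfloor=\lfloor\frac{(n-3)(n+1)}{4}\rfloor$, and the index-three total is just the count for $\widetilde F_1$ since that is the only index-three family.

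The computation is essentially routine once Proposition~\ref{prop:redundancy} is available; I expect the only slightly delicate points to be the boundary parameters (those with $a=0$, respectively $k=0$), whose mirror image under the relevant involution falls outside the chosen normalization and so must be counted as a singleton class rather than paired with anything, together with keeping straight the handful of small-size exceptions (the width-one members and the cross-family coincidences) that are responsible for the deviations in the low columns of Table~\ref{table:families} from the general formulas.
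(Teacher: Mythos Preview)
Your proposal is correct and follows essentially the same route as the paper: use Proposition~\ref{prop:redundancy} to reduce the count to parameter values modulo the listed involutions, derive the closed formulas for $n\ge 8$, and then handle the small sizes by subtracting the width-one members and the cross-family coincidences. The only cosmetic difference is that for $\widetilde F_3$ you count via the reparametrization $(\alpha,k,\gamma)=(-a,k,b-k)$ and a sum $\sum_j(\lfloor j/2\rfloor+1)$, whereas the paper phrases the same count as choosing two midpoints on the spike with repetition and applies Burnside; both yield $\lfloor((n-3)/2)^2\rfloor$.
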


\begin{proof}
For size up to seven, the counting is implicit in the proof of parts (3) and (4) of Proposition~\ref{prop:redundancy}. Thus, in the rest of the proof we assume size $n\ge 8$ and the only isomorphisms we need to take into account are those in parts (1) and (2) of Proposition~\ref{prop:redundancy}.

In the families $\widetilde F_1$ and $\widetilde F_2$ we have $b-a+1 = n-3$ lattice points along the spike and apart of size the only invariant is on which of them does the plane of the other lattice points intersect. This gives $n-3$ possibilities, but opposite ones are isomorphic so the count is $\lceil (n-3)/2\rceil$. For $\widetilde F_4$ we have the same count except that now $b-a+1=n-4$, so we get $\lceil (n-4)/2\rceil= \lfloor (n-3)/2\rfloor$.

For $\widetilde F_3$ we have $b-a+1=n-4$ lattice points along the spike, and we have to choose (perhaps with repetition) two of them to be mid-points of non-spike pairs of lattice points. This gives $\binom{n-3}{2}$ possibilities.
To mod out symmetric choices we divide that number by two but then have to add one half of the self-symmetric choices, of which there are $\lceil (n-4)/2\rceil= \lfloor (n-3)/2\rfloor$. The count, thus, is
\[
\frac12 \left(\binom{n-3}{2} + \left\lfloor \frac{n-3}2\right\rfloor\right) = 
\frac12 \left\lfloor \frac{n^2 - 6n-9}2\right\rfloor =
 \left\lfloor \frac{(n - 3)^2}4\right\rfloor.
\]
\end{proof}

%%%%%%%%%%%%%%%%%%%%%%%%%%%%%%%%%%%%%%%%%%%%%%%%%%%%%%%%
%%%%%%%%%%%%%%%%%%%%%%%%%%%%%%%%%%%%%%%%%%%%%%%%%%%%%%%%
%%%%%%%%%%%%%%%%%%%%%%%%%%%%%%%%%%%%%%%%%%%%%%%%%%%%%%%%

\section{Proof of Theorem~\protect{\ref{thm:main_exceptions}}}
\label{sec:main}

\subsection{Merged and non-merged $3$-polytopes}
\label{sec:merging}

For a lattice $d$-polytope $P\subset \R^d$ of size $n$ and a vertex $v$ of $P$ we denote
\[
P^v := \conv (P\cap \Z^d \setminus\{v\}),
\]
which has size $n-1$ and dimension $d$ or $d-1$. We abbreviate $(P^u)^v$ as $P^{uv}$.
The following definition is taken from~\cite{quasiminimals}.

\begin{definition}
Let $P$ be a lattice $3$-polytope of width $>1$ and size $n$. We say that $P$ is \emph{merged} if there exist at least two vertices $u,v\in\wert(P)$ such that $P^u$ and $P^v$ have width larger than one and such that $P^{uv}$ is $3$-dimensional.
\end{definition}

Loosely speaking, we call a polytope of size $n$ merged if it can be obtained \emph{merging} two subpolytopes $Q_1\cong P^u$ and $Q_2\cong P^v$ of size $n-1$ and width $>1$ along their common (full-dimensional) intersection $P^{uv}$. This merging operation is the basis of the enumeration algorithm in~\cite{quasiminimals} and to make it work, a complete characterization of the polytopes that are not merged was undertaken. 
Combining several results from~\cite{quasiminimals} we can prove that:

\begin{theorem}
\label{thm:spiked-index}
Let $P$ be a lattice $3$-polytope of size $n\ge 8$ and suppose that $P$ is not merged.
Then, one of the following happens:

\begin{enumerate}
\item $P$ contains a unimodular tetrahedron, and in particular it is spanning.

\item $P$ has sublattice index $2$ and is isomorphic to
\[
\widetilde F_2(0,n-4) = \conv\{(1, -1, 0), (-1, 1, 0), (-1, -1, 1), (0, 0, n-4)\}.
\]

\item $P$ has sublattice index $3$ and is isomorphic to
\[
\widetilde F_1(0,n-4)=\conv\{(2, -1, 1), (-1, 2, -1), (-1, -1, 0), (0, 0, n-4)\}.
\]
\end{enumerate}
\end{theorem}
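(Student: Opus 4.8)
The plan is to quote from~\cite{quasiminimals} the structural classification of non-merged lattice $3$-polytopes and then analyze each type for its sublattice index. Recall that in~\cite{quasiminimals} the non-merged $3$-polytopes of size $\ge 8$ (in fact of width $>1$, of which width $>1$ is part of our standing hypothesis) are shown to fall into a short list of explicit shapes: besides finitely many sporadic cases that do not occur in size $\ge 8$, they are essentially the \emph{spiked polytopes}, that is, polytopes that (after a unimodular change) consist of a long lattice segment (the \emph{spike}) together with a bounded number of extra lattice points lying in a slab of bounded width transverse to the spike, with the combinatorial type of the ``transverse picture'' drawn from a finite list. I would first restate that classification precisely, projecting along the spike to a planar lattice point configuration $A\subset\Z^2$ of bounded size, so that by Lemma~\ref{lemma:index} (part (2)) the sublattice index of $P$ divides the sublattice index of $A$, and conversely (because the spike has lattice length $\ge 3$, so it contributes the full third coordinate direction to $\gen{P\cap\Z^3}$) the two indices coincide. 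This reduces the problem to: \emph{among the finitely many transverse configurations $A$ that appear for non-merged spiked $3$-polytopes of size $\ge 8$, which have sublattice index $>1$?}

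The key steps, in order, are as follows. First, isolate the planar configurations $A$ occurring as spike-projections of non-merged spiked $3$-polytopes; this is bookkeeping from~\cite{quasiminimals}. Second, for each such $A$ compute its sublattice index via Lemma~\ref{lemma:index_gcd}, i.e.\ the gcd of the areas (normalized volumes) of the lattice triangles spanned by points of $A$. Since $A$ has bounded size this is a finite check. Third, observe that as soon as $A$ contains a unimodular triangle we are in case~(1): that unimodular triangle together with any lattice point of the spike not in its affine plane gives a unimodular tetrahedron of $P$ (here we again use that the spike has lattice length $\ge 3$, so such a point exists; one must be mildly careful that the extra point is genuinely off the plane, but the spike is transverse to the projection so this is automatic). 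Fourth, the remaining non-spanning planar configurations must have index $2$ or $3$ — and by the enumeration of small cases we already know (Table~\ref{table:numbers}) that only indices $1,2,3$ occur in size $\ge 8$ anyway, so index $\ge 4$ is excluded either directly from the planar list or by that table. Fifth, for the index-$2$ planar configuration $A$ and the index-$3$ one, identify the corresponding spiked $3$-polytope. Here one checks that the transverse picture forces, up to unimodular equivalence, $A = F_2$ (three points, index $2$) respectively $A = F_1$ (three points, index $3$) of Figure~\ref{fig:projs_families}; in particular $A$ has exactly $3$ points, so $P$ has exactly $3$ lattice points off the spike, and reconstructing $P$ from $A$ (the spike must be $S_{0,n-4}$ up to the symmetry of Proposition~\ref{prop:redundancy}(1), since both endpoints of the spike are ``extreme'' and the non-spike plane meets the spike at a distinguished point that we may normalize to an endpoint — wait, more precisely, at a point we may place at one end) yields exactly $\widetilde F_2(0,n-4)$ and $\widetilde F_1(0,n-4)$ respectively.

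The main obstacle I expect is step five: turning the planar data back into the precise $3$-polytope and pinning down the spike as $S_{0,n-4}$ rather than some other $S_{a,b}$ with $b-a=n-4$. The subtlety is that a priori the extra three points sit in a transverse slab but their third coordinates, and the position along the spike of the plane they span, are not immediately normalized. I would handle this by noting that for a \emph{non-merged} polytope the extra points cannot ``stick out'' past either endpoint of the spike in the sense that removing an endpoint of the spike must lower the width (otherwise we could merge); this forces the plane spanned by the three extra points to pass through an endpoint of the spike, which after the symmetry $\widetilde F_i(a,b)\cong\widetilde F_i(-b,-a)$ of Proposition~\ref{prop:redundancy}(1) we may take to be $(0,0,0)$ with the other endpoint at $(0,0,n-4)$. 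A secondary, more routine, obstacle is making sure that the sporadic non-merged configurations of~\cite{quasiminimals} that are not spiked genuinely all have size $<8$ (or are spanning), so that they contribute nothing to cases~(2)–(3); this is a direct appeal to the size thresholds in that paper. Once those two points are settled, the three alternatives in the statement are exactly the trichotomy ``$A$ has a unimodular triangle'' / ``$A\cong F_2$'' / ``$A\cong F_1$''.
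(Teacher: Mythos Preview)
Your approach matches the paper's: cite \cite{quasiminimals} to reduce non-merged polytopes of size $\ge 8$ to the spiked or boxed families, dispose of the boxed case (for size $\ge 8$ they contain at least five vertices of the unit cube, hence a unimodular tetrahedron), project spiked polytopes along the spike to one of finitely many planar configurations, and inspect which of these force a unimodular tetrahedron and which yield $F_1$ or $F_2$. Your step five is also the paper's argument, phrased differently: if the plane of the three off-spike points meets the spike strictly between its endpoints, then both endpoints are vertices whose removal keeps width $>1$, so $P$ would be merged.

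Two small points to fix. First, your step three is not correct as stated. If $p_1,p_2,p_3\in P$ project to a unimodular triangle and $(0,0,c)$ is a spike point, the resulting tetrahedron has volume $|c-h|$, where $h$ is the $z$-intercept of the plane through $p_1,p_2,p_3$; this is not $1$ for ``any'' spike point off that plane. The paper's fix is to require a unimodular triangle one of whose vertices has fiber of size $\ge 2$: two consecutive lattice points in that fiber together with one preimage of each of the other two vertices form a unimodular tetrahedron. Since the spike's image has fiber of size $\ge n-4\ge 4$, in practice it suffices to find a unimodular triangle in $A$ that \emph{contains the spike's image}; that is the check actually performed on the list of projections.

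Second, the non-spiked non-merged polytopes are not ``sporadic'' or bounded in size: the boxed family is infinite. It is disposed of for $n\ge 8$ by the unit-cube argument above (Proposition~4.6 in \cite{quasiminimals}), not by a size threshold.
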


\begin{proof}
This follows from Theorems~2.9, 2.12, 3.4 and 3.5 in~\cite{quasiminimals}, but let us briefly explain how. 
\begin{itemize}
\item \cite[Theorem~2.12]{quasiminimals} shows that every non-merged $3$-polytope of size $\ge 7$ is quasi-minimal, according to the following definition: it has at most one vertex $v$ such that $P^v$ has width larger than one. 

\item \cite[Theorem 2.9]{quasiminimals} says that every quasi-minimal $3$-polytope is either \emph{spiked} or \emph{boxed}. 

\item By~\cite[Proposition 4.6]{quasiminimals}, boxed polytopes of size $\ge 8$ contain $n-3\ge5$ vertices of the unit cube, so in particular they contain a unimodular tetrahedron.
\end{itemize}

Thus, for the rest of the proof we can assume that $P$ is spiked. Theorems 3.4 and 3.5 in~\cite{quasiminimals} contain a very explicit description of all spiked $3$-polytopes, which in particular implies that every spiked $3$-polytope $P$ of size $n$ projects to one of the ten $2$-dimensional configurations $A'_1,\dots,A'_{10}$ of Figure~\ref{fig:spiked_projections} (notation taken from~\cite{quasiminimals}), where the number next to each point in $A'_i$ indicates the number of lattice points in $P$ projecting to it. 
\begin{figure}[htb]
\centerline{\includegraphics[scale=.7]{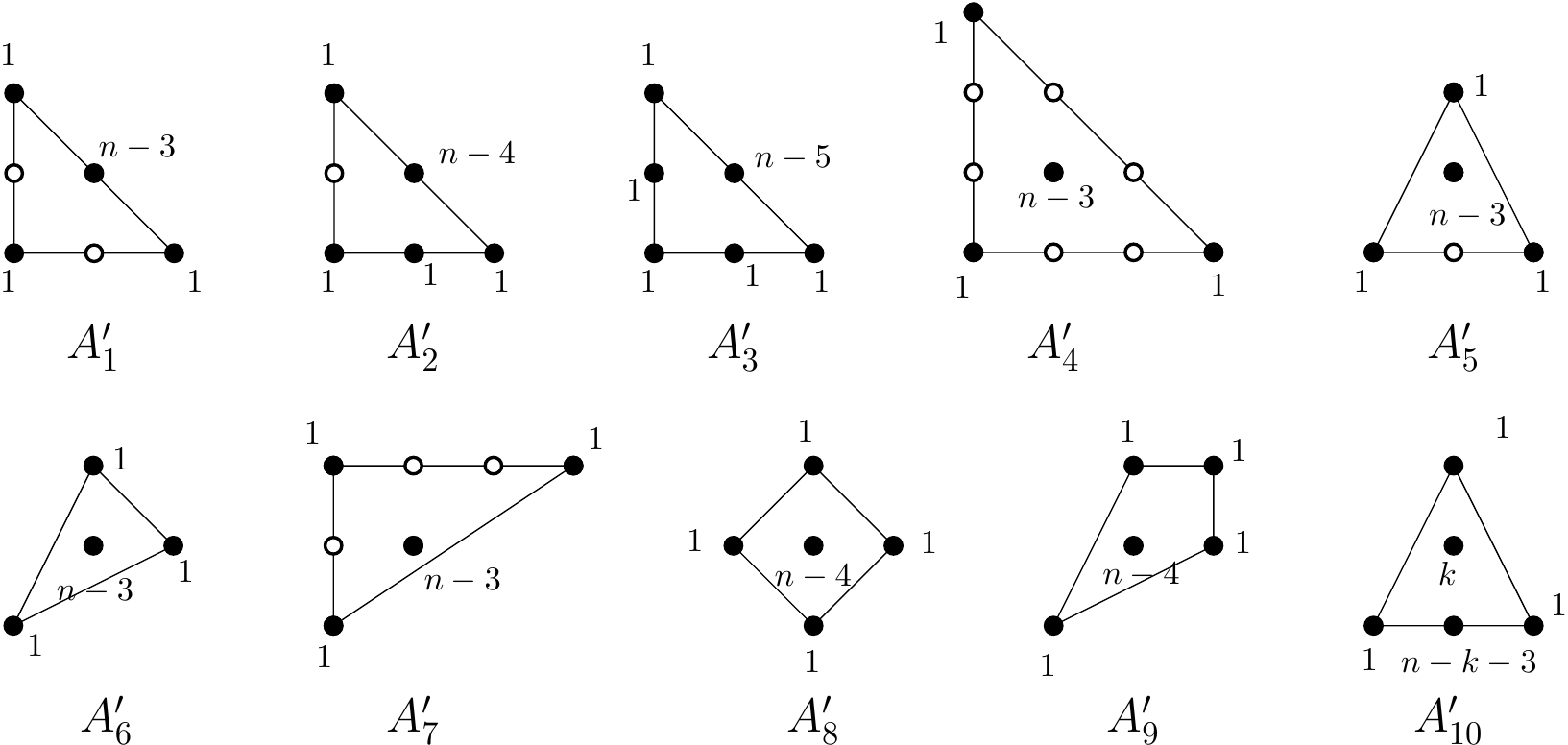}}
\caption{Possible projections of a non-merged $3$-polytope $P$ of size $n\ge 8$ along its spike. A black dot with a label $j$ is the projection of exactly $j$ lattice points in $P$.
White dots are lattice points in the projection of $P$ which are not the projection of any lattice point in $P$.}
\label{fig:spiked_projections}
\end{figure}

An easy inspection shows that all $A'_i$ except $A'_1$ and $A'_4$ have a unimodular triangle $T$ with the property that (at least) one vertex of $T$ has at least two lattice points of $P$ in its fiber. Then $T$ is the projection of a unimodular tetrahedron in $P$. 
It thus remains only to show that projections to $A'_1$ and $A'_4$ correspond exactly to cases (2) and (3) in the statement. For this we use the coordinates shown in Figure~\ref{fig:spiked_nonspanning}.
Observe that $A'_1$ and $A'_4$ are exactly the $F_2$ and $F_1$ of Lemma~\ref{lemma:inf_families}, respectively.

\begin{figure}[htb]
\centerline{\includegraphics[scale=.7]{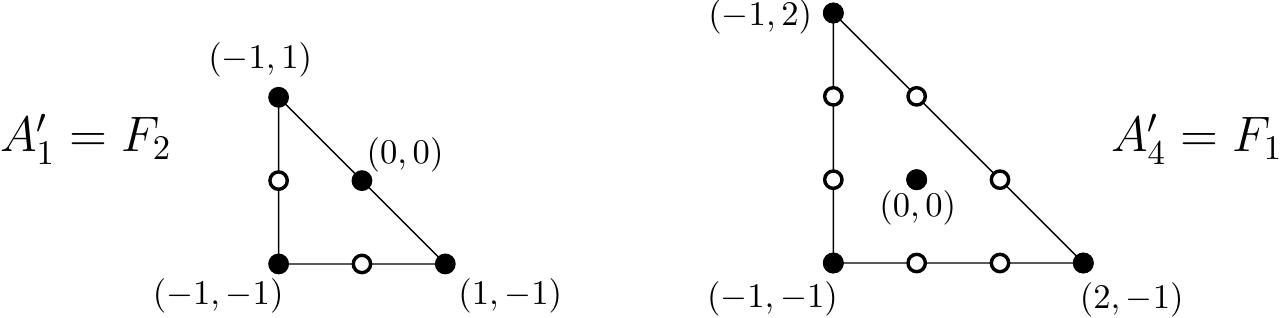}}
\caption{Possible projections of non-merged non-spanning $3$-polytopes of size $\ge 8$.}
\label{fig:spiked_nonspanning}
\end{figure}

\begin{itemize}
\item Suppose that $P$ projects to $F_2$.
By the assumption on the number of lattice points of $P$ projecting to each point in $F_2$, $P$ has exactly three lattice points outside the spike $\{(0,0,t): t \in \R\}$, and they have coordinates $(1,-1, h_1)$, $(-1,1, h_2)$ and $(-1,-1,h_3)$. In order not to have lattice points in $P$ projecting to the white dots in the figure for $F_2$, $h_1$ and $h_2$ must be of the same parity and $h_3$ of the opposite parity. There is no loss of generality in assuming $h_1$ and $h_2$ even, in which case the unimodular transformation 
\[
(x,y,z) \to \left(x,y, \frac{h_3-h_1-1}2 x + \frac{h_3-h_2-1}2 y + z - \frac{h_1+h_2}2\right)
\]
sends $P$ to $\widetilde F_2(a,b)$ for some $a\le 0 <b$ and $b-a=n-4$. 

If both $a$ and $b$ are non-zero, then $(0,0,a)$ and $(0,0,b)$ are vertices of $P$ and $P$ is merged from $P^{(0,0,a)}$ and $P^{(0,0,b)}$ (of width $>1$ since $n\ge 8$), which is a contradiction. Thus, one of $a$ or $b$ is zero, and by the isomorphism in part (1) in Proposition~\ref{prop:redundancy} there is no loss of generality in assuming it to be $a$. 

\item For $F_1$ the arguments are essentially the same, and left to the reader.
\end{itemize}
\end{proof}

\begin{corollary}
\label{coro:width>1-index}
With the only exception of the tetrahedron $E_{(5,5)}$ of Theorem~\ref{thm:main_exceptions} (of size $5$ and index $5$) every lattice $3$-polytope of width $>1$ has index at most $3$.
\end{corollary}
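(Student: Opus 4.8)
The plan is to reduce the statement about arbitrary lattice $3$-polytopes of width $>1$ to the non-merged case, where Theorem~\ref{thm:spiked-index} gives a complete list, and then to clean up the finitely many small polytopes by hand using Tables~\ref{table:numbers} and~\ref{table:families}. Concretely, I would argue by induction on the size $n$ of $P$. The base case consists of all polytopes of size $n\le 7$ (or perhaps $n\le 11$, using the full enumeration of \cite{quasiminimals}): by Table~\ref{table:numbers} the only indices occurring among width-$>1$ polytopes of size $\le 11$ are $1,2,3,5$, and the unique index-$5$ example is $E_{(5,5)}$, of size $5$. So for $n\le 11$ the corollary holds, with $E_{(5,5)}$ the only exception.

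For the inductive step, suppose $P$ has width $>1$ and size $n\ge 8$, and assume the claim for all smaller sizes. If $P$ is \emph{not merged}, then Theorem~\ref{thm:spiked-index} applies and $P$ either contains a unimodular tetrahedron (index $1$) or is isomorphic to $\widetilde F_2(0,n-4)$ (index $2$) or $\widetilde F_1(0,n-4)$ (index $3$); in all three cases the index is at most $3$, as desired. If instead $P$ \emph{is} merged, then by definition there is a vertex $v\in\wert(P)$ such that $Q:=P^v$ has width $>1$ and is $3$-dimensional (in fact two such vertices, but one suffices). Then $Q$ is a lattice $3$-polytope of width $>1$ and size $n-1\ge 7$, so by the inductive hypothesis $Q$ has index at most $3$ --- here I use that $n-1\ge 8$, or, for $n=8$, that $Q$ has size $7$ and the claim already holds for size $7$ by the base case, since $E_{(5,5)}$ has size $5\ne 7$. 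Now by Lemma~\ref{lemma:index}\eqref{item:subpolytope} (applied to the subconfiguration $Q\cap\Z^3\subseteq P\cap\Z^3$), the index of $P$ divides the index of $Q$, hence the index of $P$ is also at most $3$. This closes the induction.

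The only genuinely delicate point is the bookkeeping at the boundary between the base case and the inductive step: one must make sure that whenever $P$ is merged of size $n\ge 8$, the smaller polytope $Q=P^v$ to which induction is applied has size $\ge 7$ and is not the exceptional $E_{(5,5)}$ --- both are automatic since $\operatorname{size}(Q)=n-1\ge 7 > 5$. Everything else is a direct appeal to Theorem~\ref{thm:spiked-index}, the divisibility Lemma~\ref{lemma:index}, and the enumeration data in Table~\ref{table:numbers}; no new computation is required. (An alternative, essentially equivalent, phrasing avoids induction entirely: any $P$ of width $>1$ and size $\ge 8$ can be repeatedly ``un-merged'' down to a non-merged polytope $P_0$ of width $>1$ whose index is a multiple of $\operatorname{index}(P)$ by Lemma~\ref{lemma:index}, and $P_0$ has index $\le 3$ by Theorem~\ref{thm:spiked-index}; combined with the size-$\le 7$ enumeration this again isolates $E_{(5,5)}$ as the sole exception.)
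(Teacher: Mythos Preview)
Your argument is correct and essentially identical to the paper's own proof: induction on the size $n$ with the enumeration of Table~\ref{table:numbers} as base case, Theorem~\ref{thm:spiked-index} handling the non-merged case for $n\ge 8$, and Lemma~\ref{lemma:index}\eqref{item:subpolytope} pushing the bound from $P^v$ to $P$ in the merged case. The extra bookkeeping you spell out (that $P^v$ cannot be $E_{(5,5)}$ since its size is $n-1\ge 7$) is implicit in the paper's choice of base case.
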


\begin{proof}
Let $P$ be a lattice $3$-polytope of width $>1$. If $P$ has size at most $7$ (or at most $11$, for that matter), the statement follows from the enumerations in~\cite{quasiminimals}, as seen in Table~\ref{table:numbers}. For size $n\ge 8$ we use induction, taking $n=7$ as the base case. 
Either $P$ is non-merged, in which case Theorem~\ref{thm:spiked-index} gives the statement, or $P$ is merged, in which case it has a vertex $u$ such that $P^u$ still has width $>1$. By inductive hypothesis $P^u$ has index at most three, and by Lemma~\ref{lemma:index}(\ref{item:subpolytope}), the index of $P$ divides that of $P^u$.
\end{proof}

%%%%%%%%%%%%%%%%%%%%%%%%%%%%%%%%%%%%%%%%%%
%%%%%%%%%%%%%%%%%%%%%%%%%%%%%%%%%%%%%%%%%%

\subsection{Lattice $3$-polytopes of index $3$}
\label{sec:index3}

\begin{theorem}
\label{thm:index3}
Let $P$ be a lattice $3$-polytope of width $>1$ and of index three. Then $P$ is equivalent to 
either the tetrahedron $E_{(6,3)}$ of Theorem~\ref{thm:main_exceptions} (of size six) or to a polytope in the family $\widetilde F_1$ of Lemma~\ref{lemma:inf_families}.
\end{theorem}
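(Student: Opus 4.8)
The plan is to combine Theorem~\ref{thm:spiked-index} with an induction on the size $n$ of $P$, using the base cases supplied by the enumeration of lattice $3$-polytopes of size $\le 11$ recorded in Table~\ref{table:numbers}. First I would observe that for $n\le 11$ the statement is just a matter of inspecting the classified polytopes: Table~\ref{table:numbers} shows there are exactly $1,3,2,3,3,4,4$ polytopes of index three and sizes $5,\dots,11$, and Table~\ref{table:families} (via Corollary~\ref{coro:numbers}) shows the family $\widetilde F_1$ accounts for $1,2,2,3,3,4,4$ of them; so the only discrepancy is the single extra polytope of size six, which one checks directly to be $E_{(6,3)}$. This settles $n\le 11$ and in particular gives a base case $n=11$ (or, more efficiently, $n=7$) for the induction.

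For $n\ge 8$ (respectively $n\ge 12$ if one prefers to lean on the full enumeration) I would argue as follows. If $P$ is \emph{not} merged, then Theorem~\ref{thm:spiked-index} applies: since $P$ has index $3$ it falls into case (3), so $P\cong \widetilde F_1(0,n-4)$, which is in the family $\widetilde F_1$, and we are done. If $P$ \emph{is} merged, then by definition it has a vertex $u$ such that $P^u$ has width $>1$; moreover $P^u$ has size $n-1\ge 7$. By Lemma~\ref{lemma:index}(\ref{item:subpolytope}) the index of $P$ divides that of $P^u$, so the index of $P^u$ is a multiple of $3$; by Corollary~\ref{coro:width>1-index} it is at most $3$ (here we use $n-1\ge 7>5$, so $P^u$ is not the exceptional $E_{(5,5)}$), hence exactly $3$. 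By the inductive hypothesis $P^u$ is either $E_{(6,3)}$ or some $\widetilde F_1(a,b)$. The case $P^u\cong E_{(6,3)}$ cannot occur for $n\ge 8$ since $E_{(6,3)}$ has size six and thus $n-1=6$; for the same reason this case is irrelevant once we pass $n=7$. So $P^u\cong \widetilde F_1(a,b)$ for some $a\le 0<b$ with $b-a+1=n-4$, and $P$ is obtained from $\widetilde F_1(a,b)$ by adding one lattice point $u$ while keeping width $>1$ and index $3$.

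The heart of the proof — and the step I expect to be the main obstacle — is this last reconstruction: showing that the only way to add a lattice point to $\widetilde F_1(a,b)$ (for all admissible $(a,b)$) and stay three-dimensional, of width $>1$ and of index dividing $3$ is to land in the family $\widetilde F_1$ again. The approach would be to analyze the possible positions of $u$ relative to the spike $S_{a,b}$ and the three non-spike points $(-1,-1,0),(2,-1,1),(-1,2,-1)$. Projecting along the spike, $\widetilde F_1(a,b)\cap\Z^3$ maps onto the configuration $F_1$ (with multiplicities), and $u$ must project into $\conv(F_1)$; the index-$3$ condition forces $u$ to lie on the sublattice $\{x-y\equiv 0\pmod 3\}$, which severely restricts both its projection and its height. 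One then checks that if $u$ projects to the origin it must lengthen the spike, giving $\widetilde F_1(a',b')$ with $n'=n$; and that if $u$ projects to one of the three outer points of $F_1$ it must have the "correct" height forced by the two-parameter structure of $\widetilde F_1$, again landing in the family, while any other projection either destroys width $>1$, destroys three-dimensionality, or violates the index condition. This is a finite but somewhat delicate case analysis; the cleanest way to organize it is probably to note first that $P^u\cong\widetilde F_1(a,b)$ already pins down $P\cap\Z^3$ up to the single new point, then to use the invariants identified in the proof of Proposition~\ref{prop:redundancy} (the spike is the unique long collinearity, the plane of the outer points meets the spike at $(0,0,0)$) to rigidify the picture before enumerating the options for $u$.
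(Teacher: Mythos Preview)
Your overall architecture matches the paper's exactly: induction on size with base case supplied by the enumeration (comparing Tables~\ref{table:numbers} and~\ref{table:families}), and Theorem~\ref{thm:spiked-index} handling the non-merged case. The divergence, and the gap, is in the merged case.

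You invoke only \emph{one} of the two vertices promised by the definition of ``merged'', and then try to classify all ways of adding a single lattice point $u$ to some $\widetilde F_1(a,b)$ while preserving index $3$ and width~$>1$. Two things make your sketch incomplete. First, your claim that $u$ ``must project into $\conv(F_1)$'' is unjustified: $u$ is a vertex of $P$, so $\pi(P)$ may strictly contain $\conv(F_1)$, and you would have to rule out every sublattice point outside the triangle as well. Second, even when $\pi(u)$ lands on one of the three corners of $F_1$, you still need to show that all heights except the ``correct'' one force $\conv(P^u\cup\{u\})$ to acquire extra lattice points over the white dots of Figure~\ref{fig:projs_families}; you gesture at this but do not carry it out. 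The analysis is doable but genuinely delicate, and you have not done it.

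The paper sidesteps this entirely by using \emph{both} vertices $u$ and $v$ (and the full-dimensionality of $P^{uv}$) from the definition of merged. By induction both $P^u$ and $P^v$ lie in the family $\widetilde F_1$; since the spike is the unique collinearity of four or more lattice points in each, the two spikes coincide. If $u$ (or $v$) lies on that common spike, then $P$ is obtained from $P^u$ (or $P^v$) by extending the spike by one point, and we are done. If both $u$ and $v$ lie off the spike, one uses the following structural fact about $\widetilde F_1$: \emph{the barycenter of the three non-spike lattice points is a lattice point on the spike}. Let $w_1,w_2$ be the two off-spike lattice points of $P^{uv}$; applying the barycenter fact to $P^v$ and to $P^u$ gives that both $\tfrac13(u+w_1+w_2)$ and $\tfrac13(v+w_1+w_2)$ are lattice points on the spike. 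Subtracting, $u-v$ is parallel to the spike and of lattice length a multiple of $3$, so the segment $uv$ contains at least one interior lattice point. That point lies in $P$ but neither on the spike nor in $\{w_1,w_2\}$, contradicting that $u$ and $v$ are the only lattice points of $P$ outside $P^{uv}$.

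This barycenter observation is the idea you are missing; once you use the second vertex $v$ and this property, the ``delicate case analysis'' you anticipate disappears.
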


\begin{proof}
Let $n$ be the size of $P$.
The statement is true for $n\le 11$ by the enumerations in~\cite{quasiminimals}, as seen comparing Tables~\ref{table:numbers} and~\ref{table:families}. For size $n > 11$ we use induction, taking $n=11$ as the base case. 

Let $n>11$. If $P$ is not merged, then the result holds by Theorem~\ref{thm:spiked-index}. So for the rest of the proof we assume that $P$ is merged. Then there exist vertices $u,v\in\wert(P)$ such that $P^u$ and $P^v$ (of size $n-1$) have width $>1$, and such that $P^{uv}$ (of size $n-2$) is full-dimensional. 
By Lemma~\ref{lemma:index}(\ref{item:subpolytope}) the sublattice indices of $P^u$ and $P^v$ are multiples of $3$ and by Corollary~\ref{coro:width>1-index} they equal $3$. Thus, by induction hypothesis, both $P^u$ and $P^v$ are in the family $\widetilde F_1$.

Since both $P^u$ and $P^v$ have a spike with $n-4>7$ lattice points and the spike is the only collinearity of more than three lattice points, $P^u$ and $P^v$ have their spikes along the same line. 
If $u\in P^v$ lies along the spike of $P^v$ (resp. if $v$ lies along the spike of $P^u$), then $P$ is obtained from $P^u$ (resp. $P^v$) by extending its spike by one point, which implies $P$ is also in the family $\widetilde F_1$.

Hence, we only need to study the case where both $u$ and $v$ are outside the spike of $P^v$ and $P^u$, respectively.

In this case, $P^{uv}$ consists of a spike of length $n-5$ plus two lattice points outside of it. Let $w_1$ and $w_2$ be these two lattice points.
We use the following observation about the polytopes in the family $\widetilde F_1$: \emph{the barycenter of the three lattice points outside the spike is a lattice point in the spike}. That is, we have that both $\frac13(u+w_1+w_2)$ and $\frac13(v+w_1+w_2)$ are lattice points in the spike. In particular, $u-v$ is parallel to the spike and of length a multiple of three, which is a contradiction to the fact that $u$ and $v$ are the only lattice points of $P$ not in $P^{uv}$. See Figure~\ref{figure:mergingsF_1}.
\begin{figure}[htb]
\centerline{\includegraphics[scale=.7]{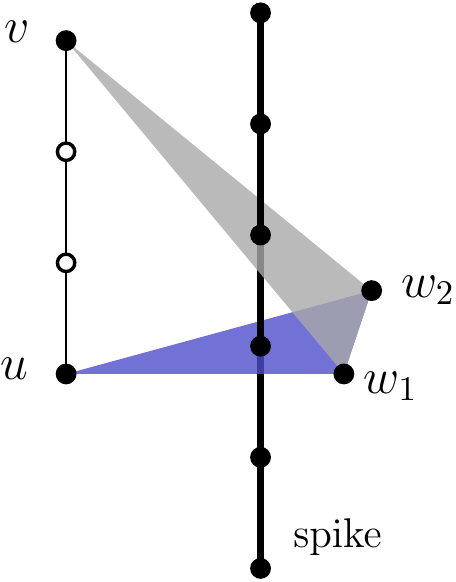}}
\caption{The setting at the end of the proof of Theorem~\ref{thm:index3}}
\label{figure:mergingsF_1}
\end{figure}
\end{proof}

%%%%%%%%%%%%%%%%%%%%%%%%%%%%%%%%%%%%%%%%%%
%%%%%%%%%%%%%%%%%%%%%%%%%%%%%%%%%%%%%%%%%%

\subsection{Lattice $3$-polytopes of index $2$}
\label{sec:index2}

The case of index $2$ uses the same ideas as in index $3$ but is a bit more complicated, since there are more cases to study. 
As a preparation for the proof, the following two lemmas collect properties of the polytopes $\widetilde F_i(a,b,\_)$ of index two.

\begin{lemma}
\label{lemma:auto_families}
\begin{enumerate}
\item Let $Q=\widetilde F_3(a,b,k)$ and $w$ be any vertex of $Q$ not in the spike. Then $Q^w$ is isomorphic to either $\widetilde F_2(a,b)$ or $\widetilde F_2(k-b,k-a)$.
\label{item:auto3}

\item Let $Q=\widetilde F_4(a,b)$ and $w\in\{(-1,-1,1),(3,-1,-1)\}$. Then $Q^w\cong\widetilde F_2(a,b)$.
\label{item:auto4}
\end{enumerate}

\end{lemma}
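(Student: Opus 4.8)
\textbf{Proof proposal for Lemma~\ref{lemma:auto_families}.}

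The plan is to verify both statements by an essentially explicit computation, exploiting the fact that all the polytopes involved have their lattice points completely described: a spike $S_{a,b}$ plus three or four named points. For part (1), fix $Q=\widetilde F_3(a,b,k)$, whose lattice points outside the spike are $p_1=(-1,-1,1)$, $p_2=(1,-1,0)$, $p_3=(-1,1,0)$ and $p_4=(1,1,2k-1)$. The vertices of $Q$ not in the spike are among these four points; I would check, from the coordinates, which of the $p_i$ are actually vertices (generically all four are, but the two spike endpoints $(0,0,a)$ and $(0,0,b)$ may also be non-vertices in degenerate small cases, which are exactly the excluded ones). Deleting $p_4$ leaves exactly $\widetilde F_2(a,b)$ by definition, so that case is immediate. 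Deleting one of $p_1,p_2,p_3$ leaves the spike together with three points, and the claim is that the result is isomorphic to $\widetilde F_2(k-b,k-a)$; I would produce the explicit unimodular transformation realising this. The natural candidate is the same kind of map that appears in Proposition~\ref{prop:redundancy}(2), namely something of the form $(x,y,z)\mapsto (y,-x,\,k+y(k-1)-z)$ (or a close variant), which fixes the spike setwise while reversing its orientation around the point $(0,0,k)$ and permutes $\{p_1,p_2,p_3,p_4\}$; one checks it sends the three surviving points to the three non-spike points of $\widetilde F_2(k-b,k-a)$ after translating the spike endpoints $a\mapsto k-b$, $b\mapsto k-a$.

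For part (2), fix $Q=\widetilde F_4(a,b)$, with non-spike lattice points $(-1,-1,1)$, $(1,-1,0)$, $(-1,1,0)$, $(3,-1,-1)$. Deleting $(3,-1,-1)$ gives $\widetilde F_2(a,b)$ verbatim, so nothing is to prove. Deleting $(-1,-1,1)$ leaves the spike plus $\{(1,-1,0),(-1,1,0),(3,-1,-1)\}$, and I must exhibit a unimodular map carrying this to $\widetilde F_2(a,b)$'s point set $S_{a,b}\cup\{(-1,-1,1),(1,-1,0),(-1,1,0)\}$. The map should fix the spike pointwise (so as not to disturb $S_{a,b}$), hence be of the form $(x,y,z)\mapsto(Ax+By, Cx+Dy, Ex+Fy+z)$ with $\begin{psmallmatrix}A&B\\C&D\end{psmallmatrix}\in GL_2(\Z)$; the conditions that it send $\{(1,-1),(−1,1),(3,−1)\}$ to $\{(-1,-1),(1,-1),(-1,1)\}$ in the plane, and adjust the third coordinates correctly, pin down $A,\dots,F$ uniquely, and I would simply write the resulting matrix and check the three points map as required.

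The only genuine subtlety, and the step I expect to take the most care, is handling the small/degenerate parameter values: a ``vertex of $Q$ not in the spike'' is only well-defined once we know which $p_i$ are vertices, and for $\widetilde F_3$ with $(a,b,k)$ near $(0,1,1)$ some of the $p_i$ lie in the convex hull of the others together with the spike, so the set of non-spike vertices shrinks. Since $\widetilde F_3(0,1,1)$ is explicitly excluded in Lemma~\ref{lemma:inf_families} and the remaining small cases can be enumerated by hand, I would dispatch these finitely many configurations directly, and treat the generic case ($b-a$ large, all $p_i$ vertices, spike endpoints vertices) by the explicit isomorphisms above. The rest is routine linear algebra over $\Z$: verifying that each displayed matrix has determinant $\pm1$ and that it maps the listed point sets as claimed, which I would present as a short list of coordinate checks rather than in prose.
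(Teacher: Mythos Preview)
Your approach --- exhibit explicit unimodular maps --- is exactly what the paper does, and your treatment of part~(2) matches the paper's: the map $(x,y,z)\mapsto(-x-2y,\,y,\,x+y+z)$ is an automorphism of $\widetilde F_4(a,b)$ fixing the spike pointwise and swapping $(-1,-1,1)\leftrightarrow(3,-1,-1)$, so $Q^{(-1,-1,1)}\cong Q^{(3,-1,-1)}=\widetilde F_2(a,b)$.

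In part~(1), however, your case split is slightly wrong. You assert that deleting any of $p_1,p_2,p_3$ yields $\widetilde F_2(k-b,k-a)$, but deleting $p_1=(-1,-1,1)$ actually gives $\widetilde F_2(a,b)$, not $\widetilde F_2(k-b,k-a)$ (and these are in general not isomorphic). The map from Proposition~\ref{prop:redundancy}(2) that you invoke acts on $\{p_1,p_2,p_3,p_4\}$ as the $4$-cycle $p_1\mapsto p_3\mapsto p_4\mapsto p_2\mapsto p_1$, so applying it once only settles $w=p_3$. The paper instead first exhibits an \emph{automorphism} of $\widetilde F_3(a,b,k)$, namely $(x,y,z)\mapsto(-y,-x,\,-(k-1)x-(k-1)y+z)$, which fixes the spike pointwise, fixes $p_2,p_3$, and swaps $p_1\leftrightarrow p_4$; this gives $Q^{p_1}\cong Q^{p_4}=\widetilde F_2(a,b)$. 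Only then does the map of Proposition~\ref{prop:redundancy}(2), carrying $\{p_1,p_4\}$ to $\{p_2,p_3\}$ and $S_{a,b}$ to $S_{k-b,k-a}$, handle $w\in\{p_2,p_3\}$. Your single map can be made to work too (by iterating it), but the grouping you wrote down would lead you to try to prove the wrong isomorphism for $w=p_1$.

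Finally, your worry about degenerate small parameters is unnecessary here: the lemma only concerns those $w$ that \emph{are} vertices, and the isomorphisms above are valid for all admissible $(a,b,k)$ regardless of which $p_i$ happen to be vertices. The paper spends no time on this.
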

\begin{proof}
For part (2) simply observe that $(x,y,z) \mapsto (-x-2y,y,x+y+z)$ is an automorphism of $F_4(a,b)$ that swaps $(-1,-1,1)$ and $(3,-1,-1)$. Thus $Q^{(-1,-1,1)}\cong Q^{(3,-1,-1)} =\widetilde F_2(a,b)$.

In part (1), the automorphism of $\widetilde F_3(a,b,k)$
\[
(x,y,z) \mapsto (-y,-x,-(k-1)x-(k-1)y+z)
\]
swaps $(-1,-1,1)$ and $(1,1,2k-1)$, and gives 
\[
Q^{(-1,-1,1)}\cong Q^{(1,1,2k-1)} = \widetilde F_2(a,b).
\]
But we have also the isomorphism in part (2) of Proposition~\ref{prop:redundancy}, which sends $Q$ to $\widetilde F_3(k-b,k-a,k)$ and maps $\{(-1,-1,1), (1,1,2k-1)\}$ to $\{(1,-1,0), (-1,1,0)\}$. Thus
\[
Q^{(1,-1,0)}\cong Q^{(-1,1,0)}\cong \widetilde F_2(k-b,k-a).
\]

\end{proof}

\begin{lemma}
\label{lemma:spike4}
Let $Q$ be of size $\ge 8$ and isomorphic to a polytope in one of the families $\widetilde F_3$ or $\widetilde F_4$ of Lemma~\ref{lemma:inf_families}. Let $w$ be a vertex of $Q$ such that $Q^w=\widetilde F_2(a,b)$. 
Then, $w$ is one of $(3,-1,-1)$, $(-1,3,-1)$, or $(1,1,2j-1)$ for some $j\in \{a,\dots,b\}$.
\end{lemma}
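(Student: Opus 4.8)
\textbf{Plan for the proof of Lemma~\ref{lemma:spike4}.}

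The plan is to reverse-engineer which vertices $w$ of $Q$ can possibly produce a copy of $\widetilde F_2(a,b)$ after deletion, by first pinning down the structure of $Q$ and then checking each candidate vertex against the combinatorial invariants of the families. First I would use the hypothesis that $Q$ has size $\ge 8$, so that the spike $S$ of $Q$ has $\ge 4$ collinear lattice points (at least $5$ if $Q\in\widetilde F_3$, at least $4$ if $Q\in\widetilde F_4$); as noted in the proof of Proposition~\ref{prop:redundancy}, this is the unique collinearity of four or more lattice points, hence any isomorphism is forced to preserve it. Since $Q^w$ must itself be a $\widetilde F_2$, which also has a spike that is the unique long collinearity, the spike of $Q^w$ must be (a sub-segment of) the spike of $Q$. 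This already forces $w$ to be one of the (three or four) lattice points of $Q$ \emph{outside} the spike: deleting a point of the spike would shorten it by one and leave the three/four outside points, which is exactly a $\widetilde F_3$ or $\widetilde F_4$ again (still having four outside-spike points in the $\widetilde F_3$ case, or three collinear ones in the $\widetilde F_4$ case — neither of which is a $\widetilde F_2$), unless the spike drops to length $<4$, excluded by size $\ge 8$.

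Next I would invoke Lemma~\ref{lemma:auto_families}, which already tells us that \emph{some} of the outside-spike vertices do work: in the $\widetilde F_4$ case the two vertices $(-1,-1,1)$ and $(3,-1,-1)$ both give $\widetilde F_2(a,b)$, and in the $\widetilde F_3$ case all four outside-spike vertices give a $\widetilde F_2$. The content of the present lemma is the converse-type refinement: for each remaining outside-spike vertex $w$, if $Q^w\cong\widetilde F_2(a,b)$ then $w$ must be one of the listed points. Concretely, by using the isomorphisms of Proposition~\ref{prop:redundancy}(1)--(2) to normalize (so $-a\le b$, and $-a\le b-k$ in the $\widetilde F_3$ case) and the automorphisms already exhibited in Lemma~\ref{lemma:auto_families}, I would reduce to checking a single representative outside-spike vertex $w$ in each family (the one not covered by those automorphisms, namely $(1,-1,0)$ or $(-1,1,0)$), compute $Q^w$ explicitly in coordinates, and identify it. The outcome should be that $Q^{(1,-1,0)}$ (resp. $Q^{(-1,1,0)}$) is isomorphic to some $\widetilde F_2$ but with parameters $(k-b,k-a)$ rather than $(a,b)$ — so it equals $\widetilde F_2(a,b)$ only in the degenerate/self-symmetric subcase $k=0$ (equivalently when the extra point $(1,1,2k-1)=(1,1,-1)$, which up to the family's automorphism is $(-1,3,-1)$ in the coordinates of the statement, or when $\widetilde F_3$ degenerates so that $(1,-1,0)$ itself plays the role of $(1,1,2j-1)$). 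Tracking these coordinate changes is what produces the list $(3,-1,-1)$, $(-1,3,-1)$, $(1,1,2j-1)$, $j\in\{a,\dots,b\}$.

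The main obstacle I anticipate is the bookkeeping of which concrete lattice point of $Q$ maps, under a chosen normalizing isomorphism, to which of the three named points in the conclusion — in particular seeing that the family of points $(1,1,2j-1)$ for $j\in\{a,\dots,b\}$ is exactly the orbit one gets by combining the automorphism $(x,y,z)\mapsto(-y,-x,-(k-1)x-(k-1)y+z)$ with the reparametrization $\widetilde F_3(a,b,k)\cong\widetilde F_3(k-b,k-a,k)$ and letting the "level" $k$ of the extra point range. I would handle this by fixing one coordinate model of $\widetilde F_3(a,b,k)$, listing its vertices, and explicitly verifying (i) that deleting each of the two "paired" outside-spike vertices gives $\widetilde F_2(a,b)$ (already in Lemma~\ref{lemma:auto_families}), (ii) that deleting either of the other two gives $\widetilde F_2(k-b,k-a)$, which coincides with $\widetilde F_2(a,b)$ only when $\{-a,b\}=\{-a,b-k\}$, i.e. $k=0$; and then re-reading the $k=0$ case, together with the $\widetilde F_4$ case where the only non-$\{(-1,-1,1),(3,-1,-1)\}$ outside-spike vertices are $(1,-1,0),(-1,1,0)$ whose deletions are checked directly, to confirm that the complete list of admissible $w$ is precisely $(3,-1,-1)$, $(-1,3,-1)$, and $(1,1,2j-1)$ with $j\in\{a,\dots,b\}$. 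The remaining verifications are finite determinant/coordinate computations which I would not grind through here.
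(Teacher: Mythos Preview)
Your plan misreads the hypothesis: the statement has $Q^w=\widetilde F_2(a,b)$ with \emph{equality}, not isomorphism. This pins down the coordinates of $Q$ completely except for the single point $w$: the lattice points of $Q$ are exactly the spike $S_{a,b}$, the three points $(-1,-1,1)$, $(1,-1,0)$, $(-1,1,0)$, and $w$. In particular $Q$ is \emph{not} in standard $\widetilde F_3$- or $\widetilde F_4$-form (only isomorphic to one), so your appeal to Lemma~\ref{lemma:auto_families}---which concerns deleting vertices from a standard-form $\widetilde F_3$ or $\widetilde F_4$---does not apply as written. The plan ``normalize $Q$, check each outside-spike vertex, then track the coordinate change back'' is circular: you cannot pick the normalizing isomorphism for $Q$ without already knowing $w$, and the points you name (e.g.\ $(3,-1,-1)$ as a vertex ``of $Q$'' in the $\widetilde F_4$ case) are vertices of the standard model, not a priori of $Q$. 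Your discussion of when $Q^{(1,-1,0)}\cong\widetilde F_2(k-b,k-a)$ coincides with $\widetilde F_2(a,b)$ is answering a different question. (A couple of smaller slips reinforce this: both $\widetilde F_3$ and $\widetilde F_4$ have exactly four outside-spike points, so the spike has $\ge 4$ points in both cases when the size is $\ge 8$, not ``$5$ and $4$''; and ``three or four'' should be four.)

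The paper's argument runs in the opposite, much shorter, direction. Your first paragraph correctly gets that $Q$ and $Q^w$ share the same spike, hence the four outside-spike points of $Q$ are $(1,-1,0)$, $(-1,1,0)$, $(-1,-1,1)$, $w$. One then uses the \emph{isomorphism-invariant} incidence structure of the families directly: in any $\widetilde F_3$ the four outside-spike points split into two pairs, each with midpoint a lattice point on the spike; in any $\widetilde F_4$ one such pair exists and the remaining two points have as midpoint one member of that pair. Since $(1,-1,0)$ and $(-1,1,0)$ already form the (unique) pair with midpoint $(0,0,0)\in S_{a,b}$, this forces either $\tfrac12\bigl((-1,-1,1)+w\bigr)\in S_{a,b}$, i.e.\ $w=(1,1,2j-1)$ with $j\in\{a,\dots,b\}$, or $\tfrac12\bigl((-1,-1,1)+w\bigr)\in\{(1,-1,0),(-1,1,0)\}$, i.e.\ $w\in\{(3,-1,-1),(-1,3,-1)\}$. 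No normalization or coordinate tracking is needed.
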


\begin{figure}[htb]
\centerline{\includegraphics[scale=.8]{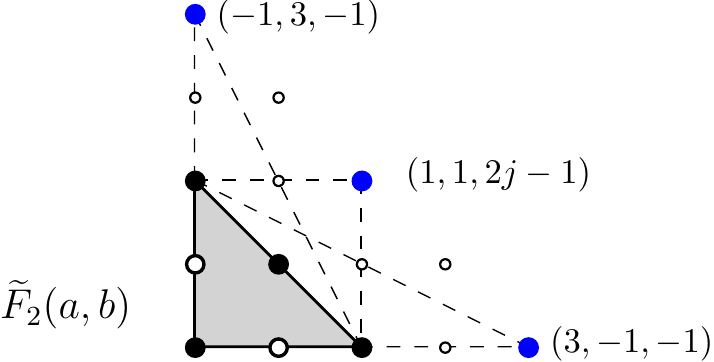}}
\caption{Illustration of Lemma~\ref{lemma:spike4}.}
\label{fig:3vertices}
\end{figure}

\begin{proof}
Since $Q$ has four lattice points outside the spike and $Q^w$ has three, their spikes have the same size. Also, since the size of $Q$ is $\ge 8$, the spike has at least four lattice points. In particular $Q$ and $Q^w$ have the same spike.
The four points of $Q$ outside the spike are $(1,-1,0)$, $(-1,1,0)$, $(-1,-1,1)$ and $w$. The fact that $(1,-1,0)$, $(-1,1,0)$ have their midpoint on the spike implies that:
\begin{itemize}
\item 
 If $Q$ is in the family $\widetilde F_3$ then the midpoint of $w$ and $(-1,-1,1)$ must also be a lattice point in the spike. That is, $w=(1,1,2j-1)$ with $j\in \{a,\dots, b\}$. 
\item 
If $Q$ is in the family $\widetilde F_4$ then $w$ and $(-1,-1,1)$ have as midpoint one of $(1,-1,0)$ and $(-1,1,0)$, so that $w\in \{(3,-1,-1),(-1,3,-1)\}$.
\end{itemize}
\end{proof}

\begin{theorem}
\label{thm:index2}
Let $P$ be a lattice $3$-polytope of width $>1$ and of index two. Then $P$ is equivalent to either one of the four exceptions of index two in Theorem~\ref{thm:main_exceptions} or to one of the polytopes in the families $\widetilde F_2$, $\widetilde F_3$ or $\widetilde F_4$ of Lemma~\ref{lemma:inf_families}. 
\end{theorem}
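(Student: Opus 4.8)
The plan is to mirror the structure of the proof of Theorem~\ref{thm:index3}, but now tracking the richer collection of index-two families $\widetilde F_2$, $\widetilde F_3$, $\widetilde F_4$ and their interrelations. As in that proof, the base case (sizes $\le 11$) follows from the enumeration in~\cite{quasiminimals} by comparing Tables~\ref{table:numbers} and~\ref{table:families}, so I would fix $n>11$ and argue by induction on the size $n$. If $P$ is not merged, then Theorem~\ref{thm:spiked-index} already forces $P\cong\widetilde F_2(0,n-4)$, so I may assume $P$ is merged: there are vertices $u,v$ with $P^u,P^v$ of width $>1$ and $P^{uv}$ three-dimensional. By Lemma~\ref{lemma:index}(1) and Corollary~\ref{coro:width>1-index}, both $P^u$ and $P^v$ have index exactly two, hence by the inductive hypothesis each is either one of the four exceptions $E_{(7,2)},E_{(8,2)}^1,E_{(8,2)}^2,E_{(8,2)}^3$ or lies in $\widetilde F_2\cup\widetilde F_3\cup\widetilde F_4$. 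Since $n-1\ge 11$ exceeds the sizes of the four exceptions, in fact $P^u,P^v\in\widetilde F_2\cup\widetilde F_3\cup\widetilde F_4$.

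Next I would exploit the spike. Every polytope in $\widetilde F_2,\widetilde F_3,\widetilde F_4$ has a spike containing $n-4$ or $n-5$ collinear lattice points — in any case at least $7$ for $n>11$ — and, as used in the proof of Proposition~\ref{prop:redundancy}, the spike is the unique collinearity of more than three lattice points. Hence $P^u$ and $P^v$ have their spikes on the same line $\ell$, and $P$ itself has at most one extra collinearity of that length, so its points other than at most one or two lie on $\ell$. If $u$ lies on $\ell$ (i.e.\ on the spike of $P^v$), then $P$ is obtained from $P^v$ by extending its spike by one lattice point; extending the spike of a member of $\widetilde F_2$ (resp.\ $\widetilde F_3$, $\widetilde F_4$) keeps it in the same family, so we are done; symmetrically if $v$ lies on the spike of $P^u$. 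It remains to treat the case where neither $u$ nor $v$ lies on the common spike line. Then $P^{uv}$ is a spike of length $n-6$ or $n-5$ together with a fixed set of off-spike points $W$, and $P\cap\Z^3 = (P^{uv}\cap\Z^3)\cup\{u,v\}$ with $u,v\notin\ell$.

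The heart of the proof — and the main obstacle — is this last case, which requires a careful finite case analysis organized by which families $P^u$ and $P^v$ belong to, much as in the index-three argument but with more subcases. The key tool is the barycenter/midpoint structure of the off-spike points: for $\widetilde F_2$ the two ``side'' points $(1,-1,0),(-1,1,0)$ have midpoint on the spike; for $\widetilde F_3$ the pair $\{(-1,-1,1),(1,1,2k-1)\}$ also has midpoint on the spike; for $\widetilde F_4$ the off-spike points include a collinear triple. Since $P$ has exactly two lattice points off $\ell$ beyond those of $P^{uv}$... wait, more precisely: $P$ has the off-spike points of $P^{uv}$ together with $u,v$, and each of $P^u=P\setminus\{u\}$, $P^v=P\setminus\{v\}$ must be one of the three families. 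Using Lemmas~\ref{lemma:auto_families} and~\ref{lemma:spike4}, which pin down exactly which vertices $w$ of a member of $\widetilde F_3$ or $\widetilde F_4$ satisfy $Q^w\cong\widetilde F_2(a,b)$, I would enumerate the possible positions of $u$ relative to $P^v$ (and vice versa). In the ``both in $\widetilde F_2$'' subcase the off-spike set of $P^{uv}$ has only one point, and the midpoint condition applied to $\{u, (1,-1,0)\text{-analogue}\}$ and to $\{v,\cdot\}$ forces $u-v$ parallel to $\ell$ of even length — contradicting that $u,v$ are the only off-$\ell$ points not already in $P^{uv}$, exactly as in Theorem~\ref{thm:index3}. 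In the subcases where $P^u$ or $P^v$ is in $\widetilde F_3$ or $\widetilde F_4$, Lemma~\ref{lemma:spike4} restricts $u,v$ to the finitely many candidate positions $(3,-1,-1)$, $(-1,3,-1)$, $(1,1,2j-1)$; checking each against the requirement that $P^{uv}$ be three-dimensional and that \emph{both} deletions land in the families, I expect to find that every surviving configuration is already isomorphic to a member of $\widetilde F_3$ or $\widetilde F_4$ (typically realized by adjoining to a $\widetilde F_2$ one of its ``completion'' points), and no new polytope — and in particular no exception beyond the four of size $\le 8$ — appears for $n>11$. Assembling these subcases completes the induction; the bookkeeping of which automorphisms identify which subcases (via Proposition~\ref{prop:redundancy} and Lemma~\ref{lemma:auto_families}) is the delicate part that must be done with care.
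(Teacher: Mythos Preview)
Your overall strategy matches the paper's: induction with base case $n\le 11$, Theorem~\ref{thm:spiked-index} for the non-merged case, then reduce to $P^u,P^v\in\widetilde F_2\cup\widetilde F_3\cup\widetilde F_4$ sharing the same spike line, and finish by analyzing the situation where $u$ and $v$ are both off the spike. Two concrete points, however, are not right and would block the argument.

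First, in the ``both in $\widetilde F_2$'' subcase you miscount: $P^u$ has three off-spike lattice points and $v$ is one of them, so $P^{uv}$ has \emph{two} off-spike points $w_1,w_2$, not one. With the correct count the midpoint constraints no longer force $u-v$ to be parallel to the spike in all cases. What actually happens (and what the paper does) is this: the image of $(-1,-1,1)$ under each isomorphism $\phi_u,\phi_v$ must land in $\{w_1,w_2\}$ (else $P^{uv}$ is $2$-dimensional), and then the pairs $\{\phi_u(1,-1,0),\phi_u(-1,1,0)\}=\{v,w_i\}$ and $\{\phi_v(1,-1,0),\phi_v(-1,1,0)\}=\{u,w_j\}$ split into two subcases. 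If $i=j$ you do get the parity contradiction you describe. But if $i\ne j$ there is no contradiction at all: the resulting $P$ is precisely a member of $\widetilde F_3$. So this subcase is not vacuous; it is in fact where the $\widetilde F_3$ polytopes come from in the induction, and your sketch as written would fail to produce them (and would fail to derive the claimed contradiction, since such $P$ exist).

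Second, in the ``four off-spike points'' subcase you invoke Lemma~\ref{lemma:spike4} directly, but that lemma has the hypothesis $Q^w=\widetilde F_2(a,b)$, i.e.\ it requires $P^{uv}$ to already be in the family $\widetilde F_2$. This is not automatic. The paper inserts a reduction step: if $P^u\in\widetilde F_3$ then $P^{uv}\in\widetilde F_2$ by Lemma~\ref{lemma:auto_families}(\ref{item:auto3}); if $P^u=\widetilde F_4(a,b)$ one observes that at least one of $(-1,-1,1),(3,-1,-1)$ is still a vertex of $P$ (they are the extreme points of the functional $x+y$), calls it $v'$, and replaces $v$ by $v'$ so that $P^{uv'}\in\widetilde F_2$ by Lemma~\ref{lemma:auto_families}(\ref{item:auto4}). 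Only after this can Lemma~\ref{lemma:spike4} be applied to both $u$ and $v$, placing them in the set $\{(3,-1,-1),(-1,3,-1)\}\cup\{(1,1,2j-1)\}$; the conclusion is then a clean contradiction (the midpoint $\tfrac12(u+v)$ is a third new lattice point), not a list of surviving configurations to be matched to $\widetilde F_3$ or $\widetilde F_4$ as you anticipate.
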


\begin{proof}
With the same arguments as in the proof of Theorem~\ref{thm:index3} we can assume that $P$ has size $> 11$, is merged from $P^u$ and $P^v$, that $P^u$ and $P^v$ are in the families $\widetilde F_2$, $\widetilde F_3$ or $\widetilde F_4$ and have the same spike, and that $u$ and $v$ lie outside the spike.
In particular, $P^u$ and $P^v$ have the same number of lattice points outside the spike. This number can be three or four, and we look at the two possibilities separately:

\begin{itemize}
\item If $P^u$ and $P^v$ have three lattice points not on the spike then they are both in the family $\widetilde F_2$. The lattice points in $P^{uv}$ are those in the spike (which is common to $P^u$ and $P^v$) together with two extra lattice points $w_1$ and $w_2$. 

Let $\phi_u: \widetilde F_2(a,b) \to P^u$ and $\phi_v: \widetilde F_2(a',b')\to P^v$ be isomorphisms. 
Then $\phi_u(-1,-1,1) \ne v$ and $\phi_v(-1,-1,1) \ne u$, since this would imply $P^{uv}$ to be $2$-dimensional. Thus, $\phi_u(-1,-1,1), \phi_v(-1,-1,1) \in \{ w_1,w_2\}$. This implies
\begin{align*}
\{\phi_u(-1,1,0), \phi_u(1,-1,0)\} &= \{v,w_i\}, \text{ and }\\
\{\phi_v(-1,1,0), \phi_v(1,-1,0)\} &= \{u,w_j\} 
\end{align*}
for some $i,j\in \{1,2\}$.
In particular the segments $vw_i$ and $uw_j$ have as mid-points the points $\phi_u(0,0,0)$ and $\phi_v(0,0,0)$, respectively, which are lattice points along the spike.
This gives two possibilities for the merging, illustrated in Figure~\ref{figure:mergingsF_2}: if $i=j$ (top part of the figure) then $uv$ is parallel to the spike and of even length, which gives a contradiction: the midpoint of $u$ and $v$ would be a third lattice point of $P$ outside $P^{uv}$.
If $i\ne j$ then $P^u \cup P^v$ are the lattice points in a polytope isomorphic to $\widetilde F_3(a,b,k)$ (bottom row in the figure), so the statement holds.

\begin{figure}[htb]
\centerline{
\begin{tabular}{|cc|c|c|}
\hline
$P^{uv}\subset P^u$ & $P^{uv}\subset P^v$ & $P^u \cup P^v$ & OUTCOME\\
\hline
	\includegraphics[scale=.8]{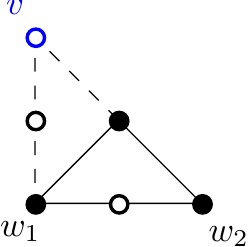} &
	\includegraphics[scale=.8]{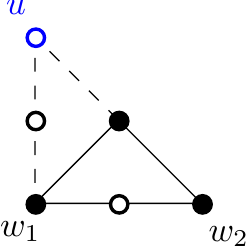} &
	\includegraphics[scale=.8]{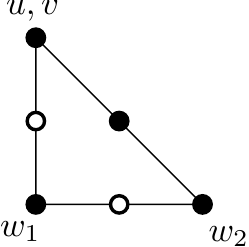}\rule{0pt}{14.5ex} & \raisebox{6ex}{contradiction}\\
\hline
	\includegraphics[scale=.8]{ind2_lv.pdf} & 
	\includegraphics[scale=.8]{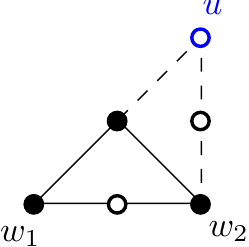} &
	\includegraphics[scale=.8]{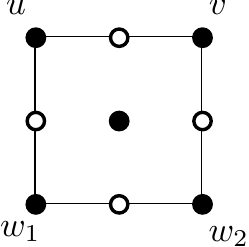}\rule{0pt}{14ex}& \raisebox{6ex}{$\widetilde F_3(a,b,k)$}\\
\hline
\end{tabular}
}
\caption{The two cases in the first part of the proof of Theorem~\ref{thm:index2}.}
\label{figure:mergingsF_2}
\end{figure}

\item If $P^u$ and $P^v$ have four lattice points not on the spike then they belong to the families $\widetilde F_3$ or $\widetilde F_4$.

Our first claim is that there is no loss of generality in assuming that $P^{uv}$ is in the family $\widetilde F_2$. Indeed, if $P^u$ is in the family $\widetilde F_3$ then this is automatically true by Lemma~\ref{lemma:auto_families}\eqref{item:auto3}. If $P^u= \widetilde F_4(a,b)$ then at least one of the vertices $(-1,-1,1)$ and $(3,-1,-1)$ of $P^u$ is also a vertex of $P$ (e.g., because these two vertices are the unique minimum and maximum on $P^u$ of the linear functional $f(x,y,z)=x+y$).
Call that vertex $v'$, and consider $P$ as merged from $P^u$ and $P^{v'}$ instead of the original $P^u$ and $P^v$. Then $P^{uv'}$ is in the family $\widetilde F_2$ by Lemma~\ref{lemma:auto_families}\eqref{item:auto4}.

Thus, we can assume for the rest of the proof that $P^{uv}= \widetilde F_2(a,b)$. Lemma~\ref{lemma:spike4} (applied first with $Q=P^u$ and $w=v$ and then with $Q=P^v$ and $w=u$) implies that both $u$ and $v$ belong to 
\[
\{(3,-1,-1), (-1,3,-1)\}\cup\{ (1,1,2j-1) : j=a,\dots, b\}
\]
which is contained in $\{(x,y,z) :\ x+y=2, \ x\equiv y\equiv z\equiv 1 \pmod 2\}$.
But then the segment $uv$, lying in the hyperplane $\{x+y=2\}$, contains at least an extra lattice point, namely $\frac12(u+v)$. See Figure~\ref{fig:3vertices}.
This implies $P$ has at least three more lattice points than $P^{uv}$, a contradiction.
\end{itemize}
\end{proof}

%%%%%%%%%%%%%%%%%%%%%%%%%%%%%%%%%%%%%%%%%%%%%%%%%%%%%%%%
%%%%%%%%%%%%%%%%%%%%%%%%%%%%%%%%%%%%%%%%%%%%%%%%%%%%%%%%
%%%%%%%%%%%%%%%%%%%%%%%%%%%%%%%%%%%%%%%%%%%%%%%%%%%%%%%%

\section{(Almost all) spanning $3$-polytopes have a unimodular tetrahedron}
\label{subsec:primitiveIFF}
\label{sec:spanning}

The following statement follows from Corollary~\ref{coro:volume_i}.

\begin{lemma}
\label{lemma:subpolytopeofindex>1}
Let $P$ be a non-spanning lattice $3$-polytope and let $v$ be a vertex of it such that $P^v$ is still $3$-dimensional.
Then $P$ and $P^v$ have the same index.
\qed
\end{lemma}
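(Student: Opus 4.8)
The statement to prove is Lemma~\ref{lemma:subpolytopeofindex>1}: if $P$ is a non-spanning lattice $3$-polytope and $v$ is a vertex of $P$ such that $P^v = \conv(P \cap \Z^3 \setminus \{v\})$ is still $3$-dimensional, then $P$ and $P^v$ have the same sublattice index. The plan is to derive this from Corollary~\ref{coro:volume_i}, which says that in any non-spanning $3$-polytope the empty tetrahedra form a triangulation and all of them have volume equal to the index. First I would note the trivial direction: since $P^v \subseteq P$ and $P^v$ is full-dimensional, Lemma~\ref{lemma:index}(\ref{item:subpolytope}) gives that the index of $P$ divides the index of $P^v$. In particular $P^v$ is also non-spanning (its index is a multiple of the index of $P$, which is $>1$... actually one must be slightly careful: $P$ non-spanning means its index is $>1$, hence the index of $P^v$, being a multiple, is also $>1$, so Corollary~\ref{coro:volume_i} applies to $P^v$ as well).

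For the reverse divisibility, the key point is that $P$ contains at least one empty tetrahedron that is also an empty tetrahedron of $P^v$. Indeed, consider the triangulation $\mathcal{T}$ of $P$ into empty tetrahedra guaranteed by Corollary~\ref{coro:volume_i}; its vertex set is exactly $P \cap \Z^3$. Since $v$ is a vertex of $P$, only finitely many simplices of $\mathcal{T}$ contain $v$, and because $P^v$ is still $3$-dimensional there is at least one tetrahedron $T \in \mathcal{T}$ not containing $v$ — for instance, any maximal simplex of the induced triangulation in a small full-dimensional region of $P^v$ away from $v$, or more simply: the union of simplices of $\mathcal{T}$ containing $v$ is a proper subset of $P$ (it is the "star" of $v$, a neighborhood of $v$), so its complement is nonempty and covered by simplices of $\mathcal{T}$ avoiding $v$. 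Such a $T$ has all four vertices in $P \cap \Z^3 \setminus \{v\} \subseteq P^v$, and $T$ is empty in $P$, hence a fortiori empty in $P^v$. By Corollary~\ref{coro:volume_i} applied to $P$, $\vol(T)$ equals the index of $P$; by Lemma~\ref{lemma:index_gcd} applied to $P^v$ (or by Corollary~\ref{coro:volume_i} applied to $P^v$), the index of $P^v$ divides $\vol(T)$. Combining the two divisibilities gives equality.

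Alternatively, and perhaps more cleanly, one can avoid talking about the star of $v$: by Corollary~\ref{coro:volume_i} \emph{all} empty tetrahedra of $P$ have volume equal to $\operatorname{index}(P)$, and $P^v$ contains at least one lattice $3$-simplex with vertices in $P^v \cap \Z^3 = P \cap \Z^3 \setminus \{v\}$ (since $P^v$ is full-dimensional); triangulating it into tetrahedra empty \emph{in $P$} shows all of these have volume $\operatorname{index}(P)$, so by Lemma~\ref{lemma:index_gcd} the index of $P^v$, being the gcd of volumes of all lattice $3$-simplices with vertices in $P^v \cap \Z^3$, divides $\operatorname{index}(P)$. Together with the reverse divisibility from Lemma~\ref{lemma:index}(\ref{item:subpolytope}), the indices coincide.

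\textbf{Main obstacle.} There is no serious obstacle here — the lemma is essentially a formal consequence of Corollary~\ref{coro:volume_i} plus the elementary monotonicity of the index under passing to subconfigurations. The only point requiring a (minor) word of care is ensuring that Corollary~\ref{coro:volume_i} is applicable to $P^v$, i.e.\ that $P^v$ is itself non-spanning; this is immediate since $\operatorname{index}(P) \mid \operatorname{index}(P^v)$ and $\operatorname{index}(P) > 1$. Everything else is a one-line gcd argument.
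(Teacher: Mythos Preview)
Your proposal is correct and matches the paper's approach: the paper simply states that the lemma follows from Corollary~\ref{coro:volume_i} (with a \qed and no further details), and your argument spells out exactly that deduction via the divisibility of indices from Lemma~\ref{lemma:index}(\ref{item:subpolytope}) and the equal-volume property of empty tetrahedra. The cleanest version is your final remark: any empty tetrahedron in $P^v$ is also an empty tetrahedron in $P$, so Corollary~\ref{coro:volume_i} applied to both $P$ and $P^v$ (the latter being non-spanning since $\operatorname{index}(P)\mid\operatorname{index}(P^v)$) forces both indices to equal that common volume.
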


With this we can now prove Theorem~\ref{thm:no_uni_tetra}:
The only spanning $3$-polytopes that do not contain a unimodular tetrahedron are the $E_{(5,1)}^1$ and $E_{(5,1)}^2$ from the statement of Theorem~\ref{thm:no_uni_tetra}.

\begin{proof}[Proof of Theorem~\ref{thm:no_uni_tetra}]
Let $P$ be a spanning $3$-polytope.
If $P$ has width one, the statement is true by Corollary~\ref{cor:width1-iff}. So assume $P$ to be of width $>1$ and let $n$ be its size.
If $n\le 7$ then the statement is true by the enumerations in~\cite{quasiminimals}, and if $P$ is not merged and of size $n\ge 8$ by Theorem~\ref{thm:spiked-index}. So, we suppose that $n\ge8$ and $P$ is merged. That is, there exist $u,v\in\wert(P)$ such that $P^u$ and $P^v$ have width $>1$, and such that $P^{uv}$ is $3$-dimensional. 

If $P^u$ or $P^v$ are spanning, then by inductive hypothesis they contain a unimodular tetrahedron, and so does $P$. 
To finish the proof we show that it is impossible for $P^u$ and $P^v$ to both have index greater than one. If this happened, then Lemma~\ref{lemma:subpolytopeofindex>1} tells us that both $P^u$ and $P^v$ have the same index as $P^{uv}$. That is, $u$ and $v$ lie in the affine lattice spanned by $P^{uv}\cap \Z^3$, which implies the index of $P$ being the same (and bigger than one), a contradiction.
\end{proof}

%%%%%%%%%%%%%%%%%%%%%%%%%%%%%%%%%%%%%%%%%%%%%%%%%%%%%%%%
%%%%%%%%%%%%%%%%%%%%%%%%%%%%%%%%%%%%%%%%%%%%%%%%%%%%%%%%
%%%%%%%%%%%%%%%%%%%%%%%%%%%%%%%%%%%%%%%%%%%%%%%%%%%%%%%%

\section{The $h^*$-vectors of non-spanning $3$-polytopes}
\label{sec:hstar}

Part of our motivation for studying non-spanning $3$-polytopes comes from results on $h^*$-vectors of \emph{spanning} lattice polytopes recently obtained by Hofscheier et al.~in~\cite{HKN-a,HKN-b}. 
Recall that the \emph{$h^*$-polynomial} of a lattice $d$-polytope $P$, first introduced by Stanley~\cite{Stanley}, is the numerator of the generating function of the sequence $(\operatorname{size}(tP))_{t\in \N}$. That is, it is the polynomial
\[
{h^*_0+ h^*_1 z + \dots + h^*_s z^s}:= (1-z)^{d+1} \sum_{t=0}^\infty \operatorname{size}(tP)\, z^t.
\]
Its coefficient vector $h^*(P):=(h^*_0, \dots, h^*_s)$ is the \emph{$h^*$-vector} of $P$. All entries of $h^*(P)$ are known to be nonnegative integers, and the degree $s\in \{0,\dots,d\}$ of the $h^*$-polynomial is called the \emph{degree} of $P$. 
See~\cite{BeckRobins} for more details.

For a lattice $d$-polytope with $n$ lattice points in total, $n_0$ of them in the interior, and with normalized volume $V$, one has $h^*_0=1$, $h^*_1= n-d-1$, $h^*_{d}=n_0$ and $\sum_i h^*_i = V$. In particular, in dimension three the $h^*$-vector can be fully recovered from the three parameters $(n,n_0,V)$ as follows:
\begin{equation}
\label{eq:h}
h^*_0=1,\qquad
h^*_1= n-4,\qquad
h^*_2=V +3 -n_0 -n,\qquad
h^*_3= n_0.
\end{equation}

From this we can easily compute the $h^*$-vectors of all non-spanning $3$-polytopes. They are given in Table~\ref{table:hstar}. For their computation, in the infinite families $\widetilde F_i$ we use that:
\begin{itemize}
\item The total number of lattice points equals $b-a+4$ in $\widetilde F_1(a,b)$ and $\widetilde F_2(a,b)$ and it equals $b-a+5$ in $\widetilde F_3(a,b,k)$ and $\widetilde F_4(a,b)$.

\item The number of interior lattice points is zero in $\widetilde F_2(a,b)$ and $b-a-1$ in the other three.

\item The volume always equals the volume of the projection $F_i$ (see Figure~\ref{fig:projs_families}) times the length $b-a$ of the spike. This follows from Corollary~\ref{coro:volume_i}.
\end{itemize}

\begin{table}[htb]\rm
\[
\begin{array}{l|rrrr}
\quad\ \ P   & (h^*_0, & h^*_1, & h^*_2, & h^*_3) \rule[-1.5ex]{0pt}{3ex}\\
\hline
T_{p,q}(a,b)  & (1,&a+b-2,&abq-a-b+1, &0)\rule{0pt}{3ex}\\
\widetilde F_1(a,b)  & (1,& n-4, &7n-28,& n-5) \rule{0pt}{3ex}\\
 \widetilde F_2(a,b)  & (1,&  n-4 , &3n-13,&  0) \rule{0pt}{3ex}\\ 
 \widetilde F_3(a,b,k)   & (1,& n-4, &6n-31,& n-6)\rule{0pt}{3ex} \\
 \widetilde F_4(a,b)   & (1,& n-4, &6n-31,& n-6) \rule{0pt}{3ex}\\
 \\
\end{array}
\]
\[
\begin{array}{c|cccc}
P    & (h^*_0, & h^*_1, & h^*_2, & h^*_3) \rule[-1.5ex]{0pt}{3ex}\\
\hline
E_{(5,5)}     & (  1 ,&1  ,& 17 ,& 1 ) \rule{0pt}{2.5ex}\\
E_{(6,3)}     & (  1 ,&2  ,& 19 ,& 2 )\rule{0pt}{2.5ex} \\
E_{(7,2)}     & (  1 ,&3  ,& 10 ,& 0 )\rule{0pt}{2.5ex} \\
E_{(8,2)}^1 & (  1 ,&4  ,& 20 ,& 3 )\rule{0pt}{2.5ex} \\
E_{(8,2)}^2 & (  1 ,&4  ,& 20 ,& 3 )\rule{0pt}{2.5ex} \\
E_{(8,2)}^3 & (  1 ,&4  ,& 21  ,&4  )\rule{0pt}{2.5ex} \\
\end{array}
\]
\medskip
\caption{The $h^*$-vectors of non-spanning $3$-polytopes. In the infinite families (top table) $n$ is the size of the polytope.
Remember that: in $T_{p,q}(a,b)$ we have $a,b\ge 1$, $n=a+b+2$ and $q>1$;
in $\widetilde F_1(a,b)$ we have $n\ge 5$ and in the rest $n\ge 6$.}
\label{table:hstar}
\end{table}

The main results in~\cite{HKN-a,HKN-b} are:

\begin{theorem}[\protect{\cite[Theorem 1.3]{HKN-a}}, \protect{\cite[Theorem 1.2]{HKN-b}}]
\label{thm:HKN}
Let $h^*=(h^*_0,\dots,h^*_s)$ be the $h^*$-vector of a spanning lattice $d$-polytope of degree $s$. Then:
\begin{enumerate}
\item $h^*$ has no gaps, that is, $h^*_i > 0$ for all $i\in \{0,\dots,s\}$.
\item for every $i,j\ge 1$ with $i+j<s$ one has
\[
h^*_1 + \dots + h^*_i \le h^*_{j+1} + \dots + h^*_{j+i}.
\]
\end{enumerate}
\end{theorem}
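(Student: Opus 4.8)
The plan is to restate everything in terms of Hilbert functions of a Cohen--Macaulay ring. Up to the factor $(1-z)^{d+1}$, the $h^*$-polynomial of $P$ is the Ehrhart series of $P$, i.e.\ the Hilbert series of the Ehrhart ring $\mathcal{E}_P$ of $P$, a finitely generated, normal --- hence Cohen--Macaulay --- graded $\mathbb{K}$-domain of Krull dimension $d+1$ whose degree-$t$ component has dimension $|tP\cap\Z^d|$. Since $P$ is full-dimensional, the degree-one part of $\mathcal{E}_P$ generates a subalgebra of Krull dimension $d+1$ over which $\mathcal{E}_P$ is a finite module, so one can choose a Noether normalization $R=\mathbb{K}[\ell_0,\dots,\ell_d]\hookrightarrow\mathcal{E}_P$ with all $\ell_i$ of degree one. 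As $\mathcal{E}_P$ is Cohen--Macaulay and $R$ is regular, $\mathcal{E}_P$ is a free graded $R$-module, necessarily $\mathcal{E}_P\cong\bigoplus_{k=0}^{s}R(-k)^{h^*_k}$ with $s=\deg P$; equivalently $h^*_k=\dim_{\mathbb{K}}\bigl(\mathcal{E}_P/(\ell_0,\dots,\ell_d)\bigr)_k$. So part~(1) says the free generators of $\mathcal{E}_P$ over $R$ occur in every degree $0,1,\dots,s$, and part~(2) is a monotonicity of the partial sums of their multiplicities.

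None of this used spanning, and indeed the conclusions fail for non-spanning $P$. The role of spanning is to make the degree-one part of $\mathcal{E}_P$ ``connected enough'' to control the higher-degree generators, and I would isolate this as a separate lemma --- the Ehrhart-theoretic \emph{uniform position principle} --- proved by induction on $d$ via a genericity argument: pick a rational hyperplane $H$ in general position with respect to $P$, let $\Lambda_H$ be the lattice it induces and $P'=P\cap H$ the corresponding $(d-1)$-polytope; the crux is that $P'$ inherits enough of the spanning property and that its $h^*$-vector compares with that of $P$ through an estimate of the shape
\[
h^*_k(P')\ \le\ h^*_k(P)\ \le\ h^*_k(P')+h^*_{k-1}(P')
\]
in the relevant range of $k$, proved by counting the lattice points of $P$ in the thin slabs bounded by consecutive integer translates of $H$ (the left inequality is the half that uses spanning: it says each slab meets every residue class in the transverse direction that it ought to).

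Granting such a comparison, the rest is bookkeeping. Part~(1) follows because, by the inductive hypothesis, $h^*(P')$ has no internal zeros, so $h^*_k(P)\ge h^*_k(P')>0$ for $1\le k\le\deg P'$, while the left-hand inequality forces $\deg P'\ge s-1$, and $h^*_s(P)>0$ holds by definition of $s$; hence no internal zero. Part~(2) follows by iterating the comparison along a nested family of generic slices and summing the inequalities $h^*_k(P')\le h^*_k(P)$, which telescopes into $h^*_1+\dots+h^*_i\le h^*_{j+1}+\dots+h^*_{j+i}$; alternatively one phrases the uniform position principle directly on the free $R$-module $\mathcal{E}_P$ as a statement about the ranks of certain maps induced by a generic linear element, which is the same inequality in disguise.

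The main obstacle is the comparison lemma, i.e.\ doing ``generic hyperplane section'' honestly inside the lattice category: a generic affine section of a lattice polytope is not a lattice polytope for $\Z^d$, so one must carry along the lattice induced on $H$ and the quotient lattice in the transverse direction, check that the slice retains enough spanning to feed the induction, and establish the two-sided bound on the $h^*$-coefficients. This is exactly where spanning is indispensable: when $P$ fails to span, the transverse slabs can miss entire residue classes, the lower bound $h^*_k(P')\le h^*_k(P)$ breaks, and internal zeros appear --- precisely as one sees for the width-one tetrahedra $T_{p,q}(1,1)$ and for empty tetrahedra of volume $>1$ in Table~\ref{table:hstar}.
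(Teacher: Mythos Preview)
This theorem is not proved in the paper: it is quoted verbatim from the references \cite{HKN-a,HKN-b} and used only as background motivation for Section~\ref{sec:hstar}. There is therefore no ``paper's own proof'' to compare your attempt against.

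As for the content of your sketch: the general shape --- pass to the Ehrhart ring, use Cohen--Macaulayness to identify $h^*$ with the Hilbert function of an artinian reduction, and then invoke a uniform-position-type argument --- is indeed the flavor of the arguments in the cited papers (the title of \cite{HKN-b} makes this explicit). But what you have written is a plan, not a proof. The step you yourself flag as the ``main obstacle'', namely the two-sided comparison $h^*_k(P')\le h^*_k(P)\le h^*_k(P')+h^*_{k-1}(P')$ for a generic lattice hyperplane section together with the claim that the section inherits spanning, is precisely where all the work lies, and you have not carried it out. In particular, a generic affine hyperplane through a lattice polytope does not produce a lattice polytope with respect to any natural sublattice in a way that preserves the Ehrhart counts you need; the actual arguments in \cite{HKN-a,HKN-b} work instead with multiplication by a generic linear form in the artinian reduction (this is the algebraic avatar of a generic section) and appeal to results from the theory of points in uniform position, rather than literally slicing $P$. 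Your telescoping derivation of part~(2) from the comparison lemma is also only gestured at and would need to be made precise. So the proposal is a reasonable high-level outline of the right circle of ideas, but it is not a proof, and since the present paper does not attempt one either, there is nothing further to compare.
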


In dimension three the only nontrivial inequality in part (2) is $h^*_1\le h^*_2$ for lattice polytopes of degree $s=3$ (that is, for lattice polytopes with interior lattice points), which 
is true by Hibi's Lower Bound Theorem~\protect{\cite[Theorem 1.3]{Hibi}}. This inequality fails for spanning polytopes without interior lattice points, as the prism $[0,1]^2 \times [0,k]$ shows (its $h^*$-vector equals $(1,4k,2k-1)$). The following easy argument implies that non-spanning $3$-polytopes satisfy a stronger inequality, even if some of them do not have interior lattice points:

\begin{proposition}
\label{prop:hstar}
Let $P$ be a lattice $3$-polytope of index $q>1$. Then, $h_2^*(P)\ge (q-1)(1 + h_1^*(P))$.
\end{proposition}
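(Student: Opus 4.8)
The plan is to use the explicit description of the $h^*$-vector in dimension three given by formula~\eqref{eq:h} together with a volume lower bound coming from the index. Writing $n$, $n_0$ and $V$ for the size, number of interior lattice points, and normalized volume of $P$, formula~\eqref{eq:h} gives $h_1^*(P)=n-4$ and $h_2^*(P)=V+3-n_0-n$. So the claimed inequality $h_2^*(P)\ge (q-1)(1+h_1^*(P)) = (q-1)(n-3)$ is equivalent to
\[
V \ge (q-1)(n-3) + n_0 + n - 3 = q(n-3) + n_0.
\]
Thus the whole proposition reduces to proving the volume bound $V \ge q(n-3) + n_0$ for a lattice $3$-polytope of index $q$.

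\textbf{Key steps.} First I would invoke Corollary~\ref{coro:volume_i} for the case relevant to this paper: when $P$ is non-spanning of index $q$, that corollary says the empty tetrahedra in $P$ triangulate $P$ and each has volume exactly $q$. A lattice triangulation of a $3$-polytope with $n$ lattice points, of which $n_0$ are interior, has at least $n - 3 + n_0$ tetrahedra; more precisely, a triangulation using all $n$ lattice points has exactly (number of tetrahedra) $= $ something bounded below by $n-3$, and each interior point forces extra simplices. The cleanest route is: any triangulation of $P$ with vertex set $P\cap\Z^3$ has at least $n-3$ full-dimensional simplices when $n_0 = 0$, and in general at least $(n-3) + n_0$ of them (each interior lattice point, when inserted into a triangulation of the boundary via a star, contributes one more tetrahedron than a boundary point would; alternatively this follows from Euler's relation or from a standard count). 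Since every simplex in the unimodular-free triangulation of Corollary~\ref{coro:volume_i} has volume $q$, we get $V \ge q\bigl((n-3)+n_0\bigr) \ge q(n-3) + n_0$, using $q \ge 1$ in the last step — and in fact $q\ge 2$ makes it strict unless $n_0 = 0$. Plugging this into the reformulation above yields exactly $h_2^*(P) \ge (q-1)(1+h_1^*(P))$.

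\textbf{Main obstacle.} The only delicate point is the combinatorial lower bound on the number of tetrahedra in a triangulation of a lattice $3$-polytope that uses all $n$ lattice points as vertices and has $n_0$ interior ones, namely the claim that there are at least $(n-3)+n_0$ of them. For $n_0 = 0$ this is the standard fact that a triangulation of a point configuration in $\R^3$ in "general position sense" on $n$ vertices on a convex body has at least $n-3$ simplices (it has exactly $n-3$ when the triangulation is "stacked"); for $n_0 > 0$ one argues that removing one interior point $p$ and retriangulating the star of $p$ (a triangulated $3$-ball with $p$ interior) replaces $\mathrm{lk}(p)$-many tetrahedra by $(\mathrm{lk}(p)-2)$-many in a triangulation of the hole boundary, but $\mathrm{lk}(p)$ has at least $4$ triangles, so each interior point accounts for a net $\ge 1$ extra tetrahedron. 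I would either spell this induction out briefly or cite a standard reference on triangulations (e.g.\ De Loera--Rambau--Santos) for the inequality $\#\text{tetrahedra} \ge (n-3) + n_0$. Everything else is the direct substitution recorded in the first paragraph, so this is the step I expect to need the most care in writing up.
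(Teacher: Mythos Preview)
Your reduction to the volume inequality $V \ge q(n-3) + n_0$ is correct, but the route you take from there has two problems, and the paper's proof avoids both by a much more direct argument.

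First, your sketched induction for the bound $t \ge (n-3)+n_0$ on the number of tetrahedra does not work as written in dimension three. When you remove an interior vertex $p$, the hole $\operatorname{star}(p)$ is a generally non-convex $3$-ball, and there is no guarantee it can be retriangulated without new vertices at all, let alone with at most $|\mathrm{lk}(p)|-2$ tetrahedra; this is precisely the Sch\"onhardt-type obstruction, and your ``$\mathrm{lk}(p)-2$'' is the two-dimensional count. The inequality $t \ge (n-3)+n_0$ is nevertheless true: a short $f$-vector computation for any triangulated convex $3$-polytope gives $h_0=1$, $h_1=n-4$, $h_3=n_0$, so the claim is exactly $h_2(T)\ge 0$, which holds because triangulated balls are Cohen--Macaulay. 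That is a legitimate citation, but heavier machinery than needed.

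Second, invoking Corollary~\ref{coro:volume_i} drags in the full classification (Theorem~\ref{thm:main_exceptions}). You actually only use that every empty tetrahedron has volume divisible by $q$, which is automatic since its vertices lie in the index-$q$ sublattice $\Lambda'=\gen{P\cap\Z^3}$.

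The paper's proof bypasses triangulations entirely. Let $P'$ be $P$ regarded with respect to $\Lambda'$. Then $P$ and $P'$ have the same lattice points, hence the same $n$ and $n_0$, while $V=qV'$. From~\eqref{eq:h},
\[
h_2^*(P)=h_2^*(P')+(q-1)V'=h_2^*(P')+(q-1)\sum_i h_i^*(P')\ \ge\ (q-1)\bigl(h_0^*(P')+h_1^*(P')\bigr)=(q-1)\bigl(1+h_1^*(P)\bigr),
\]
using only Stanley non-negativity of $h^*(P')$. Once your approach is patched, it is really this same inequality $h_2^*(P')\ge 0$ (equivalently $V'\ge (n-3)+n_0$) viewed through the triangulation of Corollary~\ref{coro:volume_i}; the paper just states it directly.
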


\begin{proof}
Let $\Lambda'\subset \Z^3$ be the lattice spanned by $P\cap \Z^3$ and let $P'$ be the polytope $P$ considered with respect to $\Lambda'$ (equivalently, let $P'=\phi(P)$ where $\phi:\R^3\to \R^3$ is an affine map extending a lattice isomorphism $\Lambda' \overset\cong\to \Z^3$). 

Let $V'$ and $V=qV'$ be the volumes of $P'$ and $P$. 
Observe that the set of lattice points (in particular, the parameters $n$ and $n_0$ in Equation~\eqref{eq:h}) does not depend on whether we look at one or the other lattice.
Then, using the expression for $h^*_2$ in Equation~\eqref{eq:h}
 we get:
\begin{align*}
h_2^*(P) &= h_2^*(P') - V' + V = \\
&=h_2^*(P') + (q-1) V' =\\
&= h_2^*(P') + (q-1) \sum_i h^*_i(P') \ge \\
&\ge (q-1) (1+h^*_1(P')) = (q-1) (1+ h^*_1(P)).
\end{align*}
\end{proof}

Concerning gaps (part (1) of Theorem~\ref{thm:HKN}), empty tetrahedra of volume $q>1$ do have gaps (their $h^*$-vectors are $(1, 0, q-1)$. Clearly, every other non-spanning polytope has $h^*_1 = n-4 > 0$ and, by Proposition~\ref{prop:hstar}, it has $h^*_2 > 0$ too. Thus, empty tetrahedra are the only lattice $3$-polytopes with gaps.

\end{document}